\newcommand{\A}{\mathbb{A}}
\newcommand{\C}{\mathbb{C}}
\newcommand{\R}{\mathbb{R}}
\newcommand{\cE}{\mathcal{E}}
\newcommand{\cF}{\mathcal{F}}
\newcommand{\cG}{\mathcal{G}}
\newcommand{\cJ}{\mathcal{J}}
\newcommand{\cK}{\mathcal{K}}
\newcommand{\cL}{\mathcal{L}}
\newcommand{\cO}{\mathcal{O}}
\newcommand{\cS}{\mathcal{S}}
\newcommand{\cV}{\mathcal{V}}
\newcommand{\Real}{\textrm{\rm Re}\,}
\newcommand{\Imag}{\textrm{\rm Im}\,}
\newcommand{\half}{\tfrac{1}{2}}
\newcommand{\iprod}[1]{\langle{#1}\rangle}
\newcommand{\dt}{{\frac{d}{dt}}}
\newtheorem{remark}[theorem]{Remark}
\title{Stability of scalar radiative shock profiles\thanks{This work was supported in part by the National 
Science Foundation award number DMS-0300487. CL, CM and RGP are warmly grateful to the 
Department of Mathematics, Indiana University, for their hospitality and financial support during two 
short visits in May 2008 and April 2009, when this research  was carried out. 
The research of RGP was partially supported by DGAPA-UNAM through the program PAPIIT, 
grant IN-109008.}}
\author{Corrado Lattanzio\footnotemark[2]
\and Corrado Mascia\footnotemark[3]
\and Toan Nguyen\footnotemark[4]
\and Ram\'on G. Plaza\footnotemark[5]
\and Kevin Zumbrun\footnotemark[4]
}
\begin{document}
\maketitle

\begin{abstract}
This work establishes nonlinear orbital asymptotic stability of scalar radiative shock 
profiles, namely,  traveling wave solutions to the simplified model system of radiating 
gas \cite{Hm}, consisting of a scalar  conservation law coupled with an elliptic equation 
for the radiation flux. 
The method is based on the derivation of pointwise Green function bounds and 
description of  the linearized  solution operator. 
A new feature in the present analysis is the construction of the resolvent kernel for the 
case of an eigenvalue system of equations of degenerate type. 
Nonlinear stability then follows in standard fashion by linear
estimates derived from these pointwise bounds, combined with
nonlinear-damping type energy estimates.
\end{abstract}

\begin{keywords}
Hyperbolic-elliptic coupled systems, Radiative shock, 
pointwise Green function bounds, Evans function.
\end{keywords}

\begin{AMS}
35B35 (34B27 35M20 76N15)
\end{AMS}

\renewcommand{\thefootnote}{\fnsymbol{footnote}}

\footnotetext[2]{Dipartimento di Matematica Pura ed Applicata,
	Universit\`a dell'Aquila, Via Vetoio, Coppito, 
	I-67010 L'Aquila (Italy)}
\footnotetext[3]{Dipartimento di Matematica ``G. Castelnuovo'',
	Sapienza, Universit\`a di Roma, P.le A. Moro 2, 
	I-00185 Roma (Italy)}
\footnotetext[4]{Department of Mathematics, Indiana University,
	Bloomington, IN 47405 (U.S.A.)}
\footnotetext[5]{Departamento de Matem\'aticas y Mec\'anica,
	IIMAS-UNAM, Apdo. Postal 20-726, C.P. 01000 M\'exico D.F. (M\'exico)}

\renewcommand{\thefootnote}{\arabic{footnote}} 

\pagestyle{myheadings} 
\thispagestyle{plain} 
\markboth{C.LATTANZIO, C.MASCIA, T.NGUYEN, R.G.PLAZA
	AND K.ZUMBRUN}{STABILITY OF SCALAR RADIATIVE SHOCK PROFILES} 

\section{Introduction}\label{sec:intro}

The one-dimensional motion of a radiating gas (due to high-temperature effects) 
can be modeled by the compressible Euler equations coupled with an elliptic equation 
for the radiative flux term \cite{Hm,VK}.
The present work considers the following simplified \textit{model system of a radiating gas}
\begin{equation}\label{modelsyst}
	\begin{aligned}
		u_t + f(u)_x + Lq_x &= 0,\\
		-q_{xx} + q + M(u)_x &= 0,
	\end{aligned}
\end{equation}
consisting of a single regularized conservation law coupled with a scalar elliptic equation. 
In \eqref{modelsyst}, $(x,t) \in \R \times [0,+\infty)$, $u$ and $q$ are scalar functions of 
$(x,t)$, $L \in \R$ is a constant, and $f, M$ are scalar functions of $u$. 
Typically, $u$ and $q$ represent velocity and heat flux of the gas, respectively. 
When the velocity flux is the Burgers flux function, $f(u) = \half u^2$, and the coupling term
$M(u) = \tilde M u$ is linear ($\tilde M$ constant), this system constitutes a good approximation 
of the physical Euler system with radiation \cite{Hm}, and it has been extensively studied by 
Kawashima and Nishibata \cite{KN1,KN2,KN3}, Serre \cite{Ser0} and Ito \cite{Ito}, among others. 
For the details of such approximation the reader may refer to \cite{KN2,KT,Hm}.

Formally, one may express $q$ in terms of $u$ as $q = - \cK M(u)_x$, where 
$\cK = (1-\partial_x^2)^{-1}$, so that system \eqref{modelsyst} represents some regularization 
of the hyperbolic (inviscid) associated conservation law for $u$. 
Thus, a fundamental assumption in the study of such systems is that
\begin{equation}\label{mainassump}
 	L\,\frac{dM}{du}(u) > 0,
\end{equation}
for all $u$ under consideration, conveying the right sign in the diffusion coming from 
Chapman--Enskog expansion (see \cite{ST}).

We are interested in traveling wave solutions to system \eqref{modelsyst} of the form
\begin{equation}\label{formTW}
	(u,q)(x,t) = (U,Q)(x-st),\qquad
	(U,Q)(\pm\infty) = (u_\pm, 0),
\end{equation}
where the triple $(u_+,u_-,s)$ is a shock front of Lax type of the underlying 
scalar conservation law for the velocity,
\begin{equation}\label{scalarcl}
	u_t + f(u)_x = 0,
\end{equation}
satisfying Rankine-Hugoniot condition $f(u_+) - f(u_-) = s(u_+ - u_-)$, 
and Lax entropy condition $\frac{df}{du}(u_+) < s < \frac{df}{du}(u_-)$. 
Morover, we assume genuine nonlinearity of the conservation law \eqref{scalarcl}, 
namely, that the velocity flux is strictly convex,
\begin{equation}\label{GNL}
 	\frac{d^2 f}{du^2}(u) > 0
\end{equation}
for all $u$ under consideration, for which the entropy condition reduces to $u_+ < u_-$. 
We refer to \textit{weak} solutions of the form \eqref{formTW} to the system \eqref{modelsyst}, 
under the Lax shock assumption for the scalar conservation law, as \textit{radiative shock profiles}. 
The existence and regularity of traveling waves of this type under hypotheses \eqref{mainassump} 
is known \cite{KN1,LMS1}, even for non-convex velocity fluxes \cite{LMS1}.

According to custom and without loss of generality, we can reduce to the case of a stationary 
profile $s=0$, by introducing a convenient change of variable and relabeling the flux function 
$f$ accordingly.
Therefore, and after substitution, we consider a stationary radiative shock profile $(U,Q)(x)$ 
solution to \eqref{modelsyst}, satisfying
\begin{equation}\label{eq:profileeqn}
	\begin{aligned}
		f(U)' + L\,Q' &= 0,\\
		-Q'' + Q + M(U)' &= 0,
	\end{aligned}
\end{equation} 
(here ${}^\prime$ denotes differentiation with respect to $x$),
connecting endpoints $(u_\pm,0)$ at $\pm\infty$, that is,
\begin{equation*}
	\lim_{x\to \pm\infty} (U,Q)(x) = (u_\pm,0).
\end{equation*}
Therefore, we summarize our main structural assumptions as follows:
\begin{align}
	\label{A0} \tag{A0}
		f, M \in C^5, \qquad &\text{(regularity),}\\
	\label{A1} \tag{A1}
		 \frac{d^2 f}{du^2}(u) > 0, \qquad &\text{(genuine nonlinearity),}\\
	\label{A2}\tag{A2}
		f(u_-) = f(u_+), \qquad &\text{(Rankine-Hugoniot condition),} \\
	\label{A3}\tag{A3}
		u_+ < u_-, \qquad &\text{(Lax entropy condition),}\\
	\label{A4} \tag{A4}
		L\, \frac{dM}{du}(u)> 0, \qquad &\text{(positive diffusion)},
\end{align}
where $u \in [u_+,u_-]$. 
For concreteness let us denote
\begin{equation}\label{defofag}
a(x) :=  \frac{df}{du}(U(x)), \quad b(x) := \frac{dM}{du}(U(x)),
\end{equation}
and assume (up to translation) that $a(0) = 0$. 
Besides the previous structural assumptions we further suppose that
\begin{equation}\label{A5k}\tag{A5$_k$}
	Lb(0) + (k + \tfrac{1}{2})a'(0) > 0, \qquad k =1,\ldots, 4.
\end{equation}

\begin{remark}\label{remarkaA4}\rm
Under assumption (A4), the radiative shock profile is monotone, and, as shown later
on, the spectral stability condition holds.
Let us stress that, within the analysis of the linearized problem and of the nonlinear 
stability, we only need (A4) to hold at the end states $u_\pm$ and at the degenerating 
value $U(0)$.
 \end{remark}

\begin{remark}\label{remarka5k}\rm 
Hypotheses (A5$_k$) are a set of additional technical assumptions inherited from the 
present stability analysis (see the establishment of $H^k$ energy estimates of  Section
 \ref{sec:damping} below, and of pointwise reduction bounds in Lemma \ref{lem-pttracking})
and are not necessarily sharp.
It is worth mentioning, however, that assumptions \eqref{A5k}, with $k=1, \ldots, 4$, are satisfied, 
for instance, for all profiles with small-amplitude $|u_- - u_+|$, in view of \eqref{mainassump} 
and $|U'| = \mathcal{O}(|u_- - u_+|)$.
\end{remark}

In the present paper, we establish the asymptotic stability of the
shock profile $(U,Q)$ under small initial perturbation. 
Nonlinear wave behavior for system \eqref{modelsyst} and its generalizations has been the 
subject of thorough research over the last decade. 
The well-posedness theory is the object of study in \cite{LM1,KaNN1,KaNN2, IK} and \cite{diF07}, 
both for the simplified model system and more general cases.
The stability of constant states \cite{Ser0}, rarefaction waves \cite{KT, GRZ}, 
asymptotic profiles \cite{Lau05, DL, DFL}  for the model system with Burgers flux has been 
addressed in the literature.

Regarding the asymptotic stability  of radiative shock profiles, the problem has been previously 
studied by Kawashima and  Nishibata \cite{KN1} in the particular case of Burgers velocity flux 
and for linear $M = \tilde M u$,  which is one of the few available stability results for \textit{scalar} 
radiative shocks in the literature\footnote{The other scalar result is the partial analysis of 
Serre \cite{Ser7} for the exact  Rosenau model; in the case of systems, we mention the stability 
result of \cite{LCG2} for the full  Euler radiating system under zero-mass perturbations, based on 
an adaptation of the classical  energy method of Goodman-Matsumura-Nishihara \cite{Go1,MN}.}. 
In \cite{KN1}, the authors establish asymptotic stability with basically the same rate of decay 
in $L^2$ and under fairly similar assumptions as we have here. 
Their method, though, relies on integrated coordinates and $L^1$ contraction property, 
a technique which may not work for  the system case (i.e., $\tilde u\in\R^n$, $n\ge2$).
In contrast, we provide techniques which may be extrapolated to systems, enable us to handle 
variable $\frac{dM}{du}(u)$, and provide a large-amplitude theory based on spectral stability 
assumptions in cases that linearized stability is not automatic (e.g., system case, or 
$\frac{dM}{du}(u)$ variable). 
These technical considerations are some of the main motivations for the present analysis.

The nonlinear asymptotic stability of traveling wave solutions to models in continuum mechanics, 
more specifically, of shock profiles under suitable regularizations of hyperbolic systems of conservation 
laws, has been the subject of intense research in recent years 
(see, e.g., \cite{Ho,ZH,MaZ1,MaZ3,MaZ4,Z3,Z4,Z7,RZ,NZ2,KZ}). 
The unifying methodological approach of these works consists of refined semigroup techniques 
and the establishment of sharp pointwise bounds on the Green function associated to the linearized 
operator around the wave, under the assumption of spectral stability. 
A key step in the analysis is the construction of the resolvent kernel, together with 
appropriate spectral bounds. 
The pointwise bounds on the Green function follow by the inverse Laplace transform 
(spectral resolution)  formula \cite{ZH,MaZ3,Z3}. 
The main novelty in the present case is the extension of the method to a situation in which 
the eigenvalue  equations are written as a degenerate first order ODE system 
(see discussion in Section \ref{subsec:framework} below). 
Such extension, we hope, may serve as a blueprint to treat other model systems for which 
the  resolvent equation becomes singular. 
This feature is also one of the main technical contributions of the present analysis.

\subsection{Main results}

In the spirit of \cite{ZH,MaZ1,MaZ4,MaZ5}, we first consider solutions to \eqref{modelsyst} 
of the form $(u+U,q+Q)$, being now $u$ and $q$ perturbations, and study the linearized 
equations of \eqref{modelsyst} about the profile $(U,Q)$, which read,
 \begin{equation}
    \begin{aligned}\begin{matrix}
    u_{t}+ (a(x) u)_{x} +L q_{x}=0, \\
    - q_{xx} +q +(b(x) u)_{x} =0,\end{matrix}
    \end{aligned}
     \label{eq:lin}
\end{equation}
with initial data $u(0)=u_0$ (functions $a, b$ are defined in \eqref{defofag}). 
Hence, the Laplace transform applied to system \eqref{eq:lin} gives
\begin{equation}
    \begin{aligned}
    \lambda u+ (a(x) u)' +L q'&= S, \\
    - q'' +q +(b(x) u)' &=0,
    \end{aligned}
     \label{eq:linLaplace}
\end{equation}
where source $S$ is the initial data $u_0$. 

As it is customary in related nonlinear wave stability analyses 
\cite{AGJ,San,ZH,GZ,MaZ1,MaZ3,Z3,Z7},  we start by studying the underlying spectral 
problem, namely, the homogeneous version  of system \eqref{eq:linLaplace}:
\begin{equation}
\label{spectralsyst}
	\begin{aligned}
		(a(x) u)' &= -\lambda\,u - Lq',\\
		q'' &= q + (b(x) u)'.
	\end{aligned}
\end{equation}
An evident necessary condition for orbital stability is the absence of $L^2$ solutions 
to \eqref{spectralsyst} for values of $\lambda$ in $\{ \Real \lambda \geq 0\} \backslash \{0\}$, 
being $\lambda = 0$ the eigenvalue associated to translation invariance. 
This strong spectral stability can be expressed in terms of the \textit{Evans function}, 
an analytic function playing a role for differential operators analogous to that played by the 
characteristic polynomial for finite-dimensional operators (see \cite{AGJ,San,GZ,ZH,MaZ1,MaZ3,Z4,Z3,Z7} 
and the references therein). The main property of the Evans function is that, on the resolvent set 
of a certain operator ${\mathcal L}$, its zeroes coincide in both location and multiplicity with 
the eigenvalues of ${\mathcal L}$.

In the present case and due to the degenerate nature of system \eqref{spectralsyst} 
(observe that $a(x)$ vanishes at $x=0$) the number of decaying modes at $\pm \infty$, spanning 
possible eigenfunctions, depends on the region of space around the singularity 
(see Section \ref{sec:resolker} below, Remark \ref{rem:notconsistent}). 
Therefore, we define the following {\it stability criterion}, where the analytic functions 
$D_\pm(\lambda)$ (see their definition in \eqref{Evansfns-def} below) denote the two Evans 
functions associated with the linearized operator about the profile in regions $x\gtrless 0$, 
correspondingly,  analytic functions whose zeroes away from the essential spectrum agree 
in location and multiplicity  with the eigenvalues of the linearized operator or 
solutions of \eqref{spectralsyst}:
\begin{equation}\label{Dcond}\tag{D}
	\text{There exist no zeroes of} \;\; D_\pm(\cdot) \; 
	\text{in the non-stable half plane} \;\; \{\Real \lambda \ge 0\} \setminus \{0\}.
\end{equation}
Our main result is then as follows.
\medskip

\begin{theorem}\label{theo-main} 
Assuming \eqref{A0}--\eqref{A5k}, and the spectral stability condition \eqref{Dcond}, 
then the Lax radiative shock profile $(U,Q)$ is asymptotically orbitally stable. 
More precisely, the solution $(\tilde u,\tilde q)$ of \eqref{modelsyst} with initial data
$\tilde u_0$ satisfies
\begin{equation*}
	\begin{aligned}
		&|\tilde u(x,t) - U(x-\alpha(t))|_{L^p} 
			\le C(1+t)^{-\frac 12(1-1/p)}|u_0|_{L^1\cap H^4}\\
		&|\tilde u(x,t) - U(x-\alpha(t))|_{H^4} 
			\le C(1+t)^{-1/4}|u_0|_{L^1\cap H^4}
\end{aligned}
\end{equation*}
and
\begin{equation*}
	\begin{aligned}
		&|\tilde q(x,t) - Q(x-\alpha(t))|_{W^{1,p}} 
			\le C(1+t)^{-\frac12(1-1/p)}|u_0|_{L^1\cap H^4}\\
		&|\tilde q(x,t) - Q(x-\alpha(t))|_{H^5} \le C(1+t)^{-1/4}|u_0|_{L^1\cap H^4}
\end{aligned}
\end{equation*}
for initial perturbation $u_0:=\tilde u_0 - U$ that are sufficiently small in $L^1\cap H^4$, 
for all $p\ge 2$, for some $\alpha(t)$ satisfying $\alpha(0)=0$ and
\begin{equation*}
	|\alpha(t)|\le C|u_0|_{L^1\cap H^4},\qquad
 	|\dot\alpha(t)|\le C(1+t)^{-1/2}|u_0|_{L^1\cap H^4},
\end{equation*}
where $\dot{}$ denotes the derivative with respect to $t$.
\end{theorem}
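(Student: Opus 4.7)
The proof follows the pointwise Green function framework of \cite{ZH,MaZ1,MaZ3,Z3}, adapted to the degenerate spectral problem \eqref{spectralsyst}. My plan is to reduce the nonlinear problem to a Duhamel integral equation for a modulated perturbation, extract pointwise bounds on the Green function from the resolvent kernel constructed earlier in the paper (where hypothesis \eqref{Dcond} enters decisively), and close via the $H^4$ damping estimate of Section \ref{sec:damping} together with a continuous-induction argument.

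To begin, I would introduce an unknown phase shift $\alpha(t)$ and write $u(x,t) := \tilde u(x,t) - U(x-\alpha(t))$, $q(x,t) := \tilde q(x,t) - Q(x-\alpha(t))$. To leading order in $\alpha$ the perturbation satisfies
\begin{equation*}
u_t + (a(x)u)_x + Lq_x = \dot\alpha(t)\, U'(x) + \partial_x\mathcal N(u,\alpha), \qquad -q_{xx}+q+(b(x)u)_x = \partial_x\mathcal M(u,\alpha),
\end{equation*}
with $\mathcal N,\mathcal M$ quadratic in $u$ plus corrections of order $\alpha\dot\alpha$ and higher. Applying Duhamel with the Green function $G(x,t;y)$ of the linearized equations \eqref{eq:lin} produces an integral equation for $u$, while the elliptic identity $q = -\cK\,(b(\cdot)u)_x + \cK\,\mathcal M$ with $\cK=(1-\partial_x^2)^{-1}$ determines $q$ in terms of $u$. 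The pointwise bounds on $G$ derived below will give a decomposition
\begin{equation*}
G(x,t;y) = U'(x)\, e(y,t) + \tilde G(x,t;y),
\end{equation*}
in which the first term captures the embedded zero eigenvalue associated with translation invariance and $\tilde G$ enjoys convected-heat-kernel bounds. Defining $\alpha$ implicitly by $\dot\alpha(t) = -\int e(y,t)\,\partial_y\mathcal N(u,\alpha)(y,t)\,dy$ with $\alpha(0)=0$ removes the slowly decaying projection onto the translation mode and leaves a Duhamel formula for $u$ involving only $\tilde G$.

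The main linear step, and the principal obstacle, is to establish those pointwise bounds on $G$ via the inverse Laplace transform of the resolvent kernel $G_\lambda$. The degeneracy of \eqref{spectralsyst} at the sonic point $x=0$, where $a(0)=0$, makes the dimensions of the stable and unstable subspaces at $\pm\infty$ asymmetric across this point, forcing one to match solutions through the singularity and to work with the paired Evans functions $D_\pm$ rather than a single global one. Hypothesis \eqref{Dcond} rules out unstable zeros of $D_\pm$ away from $\lambda = 0$, while the local analysis of $D_\pm$ near $\lambda = 0$ identifies the translation pole. Deforming the Bromwich contour into the stable half-plane and applying saddle-point analysis then yields the sharp linear $L^1 \to L^p$ decay $(1+t)^{-\frac12(1-1/p)}$ for $\tilde G$, uniform up to the degeneracy; this is the content of the resolvent analysis of the preceding sections, invoked through Lemma \ref{lem-pttracking}.

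To close the nonlinear argument I would combine these $L^p$ bounds with the damping estimate of Section \ref{sec:damping}, which under \eqref{A5k} yields $\dt |u(t)|_{H^4}^2 \le -\theta |u(t)|_{H^4}^2 + C(|u(t)|_{L^2}^2 + |\dot\alpha(t)|^2)$ for some $\theta > 0$, bootstrapping the $L^2$ decay into $H^4$ decay at rate $(1+t)^{-1/4}$. A continuous induction on
\begin{equation*}
\zeta(t) := \sup_{0 \le s \le t}\Bigl[(1+s)^{1/4}|u(\cdot,s)|_{H^4} + \sup_{p\ge 2}(1+s)^{(1-1/p)/2}|u(\cdot,s)|_{L^p}\Bigr]
\end{equation*}
then closes the bootstrap: for $|u_0|_{L^1\cap H^4}$ sufficiently small, the quadratic structure of $\mathcal N$ makes the nonlinear contributions in the Duhamel integral time-integrable with the required rate, forcing $\zeta(t) \le C|u_0|_{L^1\cap H^4}$ globally. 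The stated bounds on $\tilde q - Q(\cdot-\alpha)$ follow from the elliptic identity by applying $\cK$, which gains one derivative and upgrades $H^4$ to $H^5$ and $L^p$ to $W^{1,p}$, while the bounds on $\alpha$ and $\dot\alpha$ follow by integrating the defining ODE using $|e(\cdot,t)|_{L^\infty} = \mathcal O((1+t)^{-1/2})$.
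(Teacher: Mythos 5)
Your overall architecture coincides with the paper's: the ansatz $(u,q)=(\tilde u,\tilde q)(x+\alpha(t),t)-(U,Q)(x)$, Duhamel with the linearized Green function, extraction of the excited term $E=U'(x)[u]^{-1}e(y,t)$, a choice of $\alpha(t)$ cancelling it, the $H^4$ damping estimate, and continuous induction on a decay norm $\zeta(t)$. Two steps, however, contain genuine gaps.

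First, you assert that after removing $U'(x)e(y,t)$ the remainder $\tilde G$ ``enjoys convected-heat-kernel bounds,'' obtained by deforming the whole Bromwich contour into the stable half-plane. For this hyperbolic--elliptic system that is false: $\cL$ is a zero-order perturbation of a first-order hyperbolic operator, the semigroup does not smooth, and the high-frequency part of $G$ carries convected, time-exponentially decaying delta-function-type contributions (Remark \ref{rem-HF}). The paper therefore splits the spectral resolution: the low-frequency piece $G^I$ is bounded pointwise through the resolvent kernel (Propositions \ref{prop-nearzero}, \ref{prop-awayzero}, \ref{prop-greenbounds} --- not Lemma \ref{lem-pttracking}, which only controls the reduction error near the sonic point), while the region $|\Imag\lambda|\ge\gamma_2$ is estimated directly on the solution operator $\cS_2(t)$ by Laplace-transformed energy estimates (Propositions \ref{prop-resHF}--\ref{prop-HFest}), at the cost of derivatives; that loss, fed back through the damping estimate, is precisely why the data must lie in $H^4$. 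One must also separately control the singular term $R\sim a(y)^{-1}(x/y)^{\nu}$ arising from the degeneracy at $x=0$ (Lemma \ref{lem-estR}). Without these ingredients the linear $L^1\cap H^4\to L^p$ bounds you invoke are unavailable and the bootstrap cannot close.

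Second, your prescription $\dot\alpha(t)=-\int e(y,t)\,\partial_y\mathcal{N}(y,t)\,dy$, $\alpha(0)=0$, does not remove the translational pole. The non-decaying contribution of $E$ enters both through the initial data and through the Duhamel integral, so the correct choice is the integral representation \eqref{alpha-rep}, $\alpha(t)=-\int e(y,t)u_0(y)\,dy+\int_0^t\int e_y(y,t-s)\bigl(N_1-L\cK\partial_y N_2+\dot\alpha u\bigr)(y,s)\,dy\,ds$ --- note the kernel $e_y(y,t-s)$, not $e(y,t)$ --- with $\dot\alpha$ then given by \eqref{alphader-rep}. With your definition the term $\int E(x,t;y)u_0(y)\,dy=U'(x)[u]^{-1}\int e(y,t)u_0(y)\,dy$ survives in the formula for $u$ and does not decay in time, so the claimed $L^p$ decay of $u$ would fail at the linear level already.
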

\medskip

\begin{remark}\rm
The time-decay rate of $q$ is not optimal. In fact, it can be improved
as we observe that $|q(t)|_{L^2} \le C|u_x(t)|_{L^2}$ and $|u_x(t)|_{L^2}$ is 
expected to decay like $t^{-1/2}$; we omit, however, the details of the proof.
\end{remark}

The second result of this paper is the verification of the spectral stability condition 
\eqref{Dcond} under particular circumstances.
\medskip

\begin{proposition}\label{prop-verifyD} 
The spectral stability condition \eqref{Dcond} holds under either \\
(i) $b$ is a constant; or,\\
(ii) $|u_+-u_-|$ is sufficiently small.
\end{proposition}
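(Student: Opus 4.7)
The plan is to treat the two cases by different mechanisms: an exact spectral energy estimate in case (i), and a perturbation argument from the zero-amplitude limit in case (ii).

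For case (i), $b$ constant, I would work with the integrated variable $v$ defined by $v'=u$ and $v\to 0$ at the relevant spatial infinity. Integrating the first equation of (\ref{spectralsyst}) gives $\lambda v + a(x)u + Lq = 0$, and, since $b$ is constant, the elliptic equation $-q'' + q + bu' = 0$ yields $q = b(I-\cK)v$ with $\cK=(1-\partial_x^2)^{-1}$ a positive self-adjoint contraction. Pairing this identity with $\bar v$ in $L^2$ and taking real parts produces
\begin{equation*}
\Real\lambda\,\|v\|_{L^2}^2 \;-\; \tfrac12\int a'(x)|v|^2\,dx \;+\; Lb\,\|(I-\cK)^{1/2}v\|_{L^2}^2 \;=\; 0,
\end{equation*}
where the boundary contribution from integrating $\int a v'\bar v$ by parts vanishes because $a(0)=0$ and $v$ decays at infinity. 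By genuine nonlinearity and monotonicity, $a'(x)=f''(U(x))U'(x)<0$, so the middle term is strictly positive when $v\not\equiv 0$; by (A4), $Lb>0$, so the last term is nonnegative; and by assumption $\Real\lambda\ge 0$, so the first term is nonnegative. This forces $v\equiv 0$ and hence $u\equiv 0$, contradicting $D_\pm(\lambda)=0$.

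For case (ii), $|u_+-u_-|$ small, I would argue by perturbation. Writing $\varepsilon=|u_+-u_-|$ and rescaling the spatial variable as needed, the coefficients $a(x)$ and $b(x)$ become $C^1$-perturbations of size $\mathcal O(\varepsilon)$ of the constant values $f'(u_\pm)$ and $M'(u_\pm)$. For each compact region $|\lambda|\le R$ of the non-stable half plane, $D_\pm(\lambda;\varepsilon)$ converge uniformly as $\varepsilon\to 0$ to the Evans functions of the constant-coefficient spectral problems at $u_\pm$, which can be written down explicitly and checked to have no zeros in $\{\Real\lambda\ge 0\}\setminus\{0\}$ (apart from a simple zero at $\lambda=0$ coming from translation invariance). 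Continuity of zeros excludes nontrivial zeros in the bounded region for $\varepsilon$ small. For $|\lambda|\ge R$ with $R$ fixed and large, high-frequency resolvent estimates of the same damping character as the $H^k$ estimates of Section \ref{sec:damping} exclude eigenvalues of large modulus uniformly in $\varepsilon$, completing the verification of (D).

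The main obstacle I expect is in case (i): the energy identity above is cleanest on the full line, while the $D_\pm$ are one-sided Evans functions defined via the degenerate structure at $x=0$. Making the argument rigorous requires implementing the half-line version of $q=b(I-\cK)v$ with the correct matching conditions at $x=0$ produced by the $D_\pm$ formulation, and verifying that those boundary data do not destroy the nonnegativity of $L\int q\bar v$. Once the half-line positivity is pinned down (the boundary term in $\int a v'\bar v$ vanishing automatically thanks to $a(0)=0$), the proof reduces to the energy identity above. In case (ii), the analogous subtle point is obtaining a high-frequency eigenvalue bound whose threshold $R$ is independent of amplitude, but this is expected to follow from the same damping mechanism that drives the nonlinear stability argument.
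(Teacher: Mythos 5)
Your case (i) is essentially the paper's own argument (Appendix \ref{appx-condD}, Proposition \ref{prop-spectralcond}): pass to integrated coordinates and run a spectral energy estimate whose good terms are $-\tfrac12\int a'|v|^2>0$ (monotonicity plus genuine nonlinearity) and a nonnegative radiative term ($Lb>0$). Your identity $q=b(I-\cK)v$ with $\iprod{Lq,v}=Lb\,\|(I-\cK)^{1/2}v\|_{L^2}^2\ge0$ is just a cleaner packaging of the paper's integration by parts $\iprod{Lq,q''-q}=-L|q'|^2_{L^2}-L|q|^2_{L^2}$. Your worry about half-line matching is unnecessary: by construction the zeros of $D_\pm$ away from the essential spectrum coincide with genuine $L^2$ eigenfunctions of \eqref{spectralsyst} on the whole line, so the whole-line energy identity (with the boundary term at $x=0$ killed by $a(x)u(x)\to0$, Lemma \ref{lem:tech}) is exactly what is needed.

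For case (ii) you diverge from the paper, and your sketch has a real gap. The paper simply reuses the same energy identity with variable $b$: pairing against $b\bar u$ produces the extra terms $\iprod{a\,b'u,u}+\iprod{L\,b'q,u}$, which are of size $\cO(|u_+-u_-|)\iprod{|U'|\,u,u}$ and are absorbed by the good term $\iprod{a'b\,u,u}\le-c\,\iprod{|U'|\,u,u}$ for small amplitude --- a one-line extension of case (i). Your proposed Evans-function continuity argument runs into the standard small-amplitude degeneracy: as $|u_+-u_-|\to0$ the profile collapses to a constant state, both $D_\pm$ converge to the (nonvanishing but degenerate) constant-coefficient object, and the essential spectrum boundary $\lambda_\pm(\xi)$ collapses onto the imaginary axis near the origin. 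Uniform convergence on compact subsets of $\{\Real\lambda\ge0\}\setminus\{0\}$ together with a high-frequency bound does not exclude unstable zeros in a neighborhood of $\lambda=0$ that shrinks with the amplitude --- precisely where the translational zero sits and where the interesting spectrum of a small shock lives. Closing that gap requires the rescaled small-amplitude Evans analysis of \cite{PZ}, which is a substantial piece of work; as written, your argument does not rule out small unstable eigenvalues of order $\cO(|u_+-u_-|^2)$. I would recommend replacing case (ii) by the perturbation of the case-(i) energy estimate.
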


\begin{proof} See Appendix \ref{appx-condD}.\end{proof}

\subsection{Discussions}
Combining Theorem \ref{theo-main} and Proposition \ref{prop-verifyD}, we partially recover 
the results of \cite{KN1} for the Burgers flux and constant $L$, $M$, and at the same time 
extend them to general convex flux and quasilinear $M$.
We note that the stability result of \cite{KN1} was for all smooth shock profiles, for which 
the boundary (see \cite{KN1}, Thm. 1.25(ii)(a)) is the condition $LM=1=-a'(0)$; that is, their 
results hold whenever $LM+a'(0)>0$.
By comparison, our results hold on the smaller set of waves for which $LM+(9/2)a'(0)>0$; 
see Remark \ref{remarka5k}.
By estimating high-frequency contributions explicitly, rather than by the simple energy 
estimates used here, we could at  the expense of further effort reduce these conditions to 
the single condition 
\begin{equation}\label{single}
	LM+2a'(0)>0
\end{equation}
used to prove Lemma \ref{lem-pttracking}.
Elsewhere in the analysis, we need only $LM+a'(0)>0$; however, at the moment we 
do not see how to remove \eqref{single} to recover the full result of \cite{KN1} in the 
special case considered there.
The interest of our technique, rather, is in its generality ---particularly the possibility to 
extend to the system case--- and in the additional information afforded by the pointwise 
description of behavior, which seems interesting in its own right.

\subsection{Abstract framework}\label{subsec:framework}
Before beginning the analysis, we orient ourselves with a few simple
observations framing the problem in a more standard way. 
Consider now the inhomogeneous version
 \begin{equation}\label{eq:fulllin}
    \begin{aligned}
    u_{t}+ (a(x) u)_{x} +L q_{x}&=\varphi,\\
    - q_{xx} +q +(b(x) u)_{x} &=\psi,
    \end{aligned}   
\end{equation}
of \eqref{eq:lin}, with initial data $u(x,0)=u_0$. 
Defining the compact operator $\cK:=(-\partial_x^2+ 1)^{-1}$ 
and the bounded operator
\begin{equation*}
 	\cJ\,u:= -L\,\partial_x\,\cK\,\partial_x (b(x) u),
\end{equation*}
we may rewrite this as a nonlocal equation
 \begin{equation}\label{eq:redlin}
	\begin{aligned}
    		u_{t}+ (a(x) u)_{x} + \cJ u&=\varphi-L\,\partial_x\left(\cK\,\psi\right),\\
		u(x,0)&=u_0(x)
	\end{aligned}
\end{equation}
in $u$ alone.
The generator $\cL:= - (a(x) u)_{x} - \cJ u$ of \eqref{eq:redlin} is a zero-order 
perturbation of the generator $- a(x) u_x$ of a hyperbolic equation, so 
it generates a $C^0$ semigroup $e^{\cL t}$ and an associated Green 
distribution $G(x,t;y):=e^{\cL t}\delta_y(x)$.
Moreover, $e^{\cL t}$ and $G$ may be expressed through the inverse
Laplace transform formulae
\begin{equation}\label{iLT}
	\begin{aligned}
		e^{\cL t}&=\frac{1}{2\pi i} \int_{\gamma -i\infty}^{\gamma+i\infty}
		e^{\lambda t} (\lambda-\cL)^{-1}d\lambda,\\
		G(x,t;y)&=\frac{1}{2\pi i} \int_{\gamma -i\infty}^{\gamma+i\infty}
		e^{\lambda t} G_\lambda(x,y)d\lambda,\\
	\end{aligned}
\end{equation}
for all $\gamma\ge \gamma_0$ (for some $\gamma_0>0$), 
where $G_\lambda(x,y):=(\lambda-\cL)^{-1}\delta_y(x)$  is the resolvent kernel of $\cL$.

Collecting information, we may write the solution of \eqref{eq:fulllin} using 
Duhamel's principle/variation of constants as
\begin{equation*}
	\begin{aligned}
		u(x,t)&= \int_{-\infty}^{+\infty} G(x,t;y)u_0(y)dy \\
		&\quad + \int_0^t \int_{-\infty}^{+\infty} G(x,t-s;y)
		(\varphi-L\,\partial_x\left(\cK\,\psi\right))(y,s)\, dy\, ds,\\
		q(x,t)& = \cK\bigl( \psi-\partial_x\left(b(x) u \right)\bigr) (x,t),
	\end{aligned}
\end{equation*}
where $G$ is determined through \eqref{iLT}.

That is, the solution of the linearized problem reduces to finding the Green kernel 
for the $u$-equation alone, which in turn amounts to solving the resolvent equation 
for $\cL$ with delta-function data, or, equivalently, solving the differential equation
\eqref{eq:linLaplace} with source $S=\delta_y(x)$. 
We shall do this in standard fashion by writing \eqref{eq:linLaplace} as a first-order
system and solving appropriate jump conditions at $y$ obtained by the requirement 
that $G_\lambda$ be a distributional solution of the resolvent equations.

This procedure is greatly complicated by the circumstance that the resulting $3\times 3$ 
first-order system, given by
\begin{equation*}
	\left(\Theta(x) W\right)_x = \A(x,\lambda)W \qquad\textrm{where}\quad
	\Theta(x):= \begin{pmatrix} a(x) & 0 \\ 0  & I_2 \end{pmatrix},
\end{equation*}
is  {\it singular}  at the special point where $a(x)$ vanishes.
However, in the end we find as usual that $G_\lambda$ is uniquely determined by these criteria, 
not only for the values $\Real \lambda \ge \gamma_0>0$ guaranteed by $C^0$-semigroup theory/energy 
estimates, but, as in the usual nonsingular case \cite{He}, on the {\it set of consistent splitting}
for the first-order system, which includes all of $\{\Real \lambda \ge  0\}\setminus \{0\}$. 
This has the implication that the essential spectrum of $\cL$ is  confined to 
$\{\Real \lambda<0\}\cup \{0\}$.

\begin{remark}\label{jump}\rm
The fact (obtained by energy-based resolvent estimates) that $\cL-\lambda$ is coercive for 
$\Real \lambda \ge \gamma_0$ shows by elliptic theory that the resolvent is well-defined and unique in
class of distributions for $\Real \lambda $ large, and thus the resolvent kernel may be determined 
by the usual construction using appropriate jump conditions. 
That is, from standard considerations, we see that the construction {\it must}  work, despite the 
apparent wrong dimensions of decaying manifolds  (which happen for any $\Real \lambda >0$).
\end{remark}

To deal with the singularity of the first-order system is the most delicate and novel part of the 
present analysis. 
It is our hope that the methods we use here may be of use also in other situations where
the resolvent equation becomes singular, for example in the closely related situation of 
relaxation systems discussed in \cite{MaZ1,MaZ5}.

\subsection*{Plan of the paper}

This work is structured as follows. 
Section \ref{sec:prel} collects some of the properties of radiative profiles, and contains a technical 
result which allows us to rigorously define the resolvent kernel near the singularity. 
The central Section \ref{sec:resolker} is devoted to the construction of the resolvent kernel, based 
on the analysis of solutions to the eigenvalue equations both near and away from the singularity. 
Section \ref{sec:reskbounds} establishes the crucial low frequency bounds for the resolvent kernel. 
The following Section \ref{sec:ptwise} contains the desired pointwise bounds for the 
``low-frequency'' Green function, based on the spectral resolution formulae. 
Section \ref{sec:damping} establishes an auxiliary nonlinear damping energy estimate. 
Section \ref{sec:hfest} deals with the high-frequency region by establishing energy estimates on 
the solution operator directly. 
The final Section \ref{sec:nonlinear} blends all previous estimations 
into the proof of the main nonlinear stability result (Theorem \eqref{theo-main}). 
We also include three Appendices, which contain, a pointwise extension of the Tracking lemma, 
the proof of spectral stability under linear coupling or small-amplitude assumptions, and the 
monotonicity of general scalar profiles, respectively.

\section{Preliminaries}\label{sec:prel}

\subsection{Structure of profiles}

Under definition \eqref{defofag}, we may assume (thanks to translation invariance; 
see Remark \ref{regulrem} below) that $a(x)$ vanishes exactly at one single point 
which we take as $x =0$. 
Likewise, we know that the velocity profile is monotone decreasing (see \cite{LMS1,LMS2,Ser7} 
or Lemma \ref{monoU} below), that is $U'(x)<0$, which implies, 
in view of genuine nonlinearity \eqref{GNL}, that
\begin{equation*}
	a'(x) < 0 \qquad \forall\,x\in\R
	\qquad\textrm{and}\qquad
	 x\,a(x) < 0\qquad\forall\,x\neq 0.
\end{equation*}
From the profile equations we obtain, after integration, that
\begin{equation*}
	LQ = f(u_\pm) - f(U) > 0,
\end{equation*}
for all $x$, due to Lax condition. 
Therefore, substitution of the profile equations \eqref{eq:profileeqn} yields the relation
\begin{equation*}
	\bigl(a'(x) + L\, b(x)\bigr)U' = -LQ - a(x)U'',
\end{equation*}
which, evaluating at $x=0$ and from monotonicity of the profile, implies that
\begin{equation}\label{eq:profdiff}
	a'(0) + L\,b(0) > 0.
\end{equation}
Therefore, the last condition is a consequence of the existence result 
(see Theorem \ref{existencethm} below), and it will be used throughout. 
Notice that (A$5_1$) implies condition \eqref{eq:profdiff}.

Next, we show that the waves decay exponentially to their end states, a crucial fact 
in the forthcoming analysis.
\medskip

\begin{lemma}\label{lem-expdecay} 
Assuming \eqref{A0} - \eqref{A4},  a radiative shock profile $(U,Q)$ of
\eqref{modelsyst} satisfies
\begin{equation}\label{layerdecay}
	\Big|(d/dx)^k(U-u_\pm,Q)\Big|\le C e^{-\eta |x|},\quad k=0,...,4,
\end{equation}
as $|x|\to +\infty$, for some $\eta>0$.
\end{lemma}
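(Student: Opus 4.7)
The plan is to reduce the profile system to a single scalar second-order ODE in $U$, linearize at each endstate, verify the saddle structure, and then invoke a standard stable/unstable manifold (exponential dichotomy) argument.

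First I would integrate the first equation in \eqref{eq:profileeqn} from $\pm\infty$, using $Q(\pm\infty)=0$, $f(U)(\pm\infty)=f(u_\pm)$, and the Rankine--Hugoniot condition (A2), to obtain the algebraic identity $LQ(x)=f(u_\pm)-f(U(x))$ valid on all of $\R$. Differentiating yields $LQ'=-a(x)U'$ and $LQ''=-(a(x)U')'$. Substituting into the second equation of \eqref{eq:profileeqn} produces the reduced scalar ODE
\begin{equation*}
    a(x)\,U''+\bigl(a'(x)+Lb(x)\bigr)U'+\bigl(f(u_\pm)-f(U)\bigr)=0, \qquad (\star)
\end{equation*}
which is non-singular for $|x|$ large since $a(x)\to a_\pm:=f'(u_\pm)\neq 0$.

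Next I would rewrite $(\star)$ as a planar first-order system for $(V,W):=(U-u_\pm,U')$ and pass to the autonomous limit at $\pm\infty$, where $a(x)\to a_\pm$, $a'(x)\to 0$, $b(x)\to b_\pm:=M'(u_\pm)$. The linearization at $(V,W)=(0,0)$ has characteristic polynomial
\begin{equation*}
    a_\pm\mu^2+Lb_\pm\mu-a_\pm=0,
\end{equation*}
whose discriminant $L^2b_\pm^2+4a_\pm^2$ is strictly positive and whose roots have product $-1$. Hence the roots are real and of opposite sign, so $(u_\pm,0)$ are hyperbolic saddles; the hypotheses used are exactly (A1)--(A4) (genuine nonlinearity together with the Lax condition give $a_+<0<a_-$, and (A4) gives $Lb_\pm>0$, although in fact only $a_\pm\neq 0$ is needed for hyperbolicity).

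Since $(U,U')$ is a heteroclinic orbit joining the two saddles, standard invariant-manifold / roughness theory for exponential dichotomies (applied to the $x$-dependent coefficients of $(\star)$, which converge exponentially to their limits once decay of $U-u_\pm$ is known to any rate, bootstrapped from the known monotone convergence to the endstates via Lemma~\ref{monoU}) gives
\begin{equation*}
    |U(x)-u_\pm|+|U'(x)|\le Ce^{-\eta|x|}
\end{equation*}
for any $\eta>0$ smaller than the spectral gap $\min\{|\mu^+_{\rm s}|,|\mu^-_{\rm u}|\}$. Exponential decay of $Q$ at the same rate follows from $LQ=f(u_\pm)-f(U)$ and Taylor expansion around $u_\pm$. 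Higher derivatives are handled by bootstrapping: for $|x|$ large, $(\star)$ lets us solve for $U''$ as a smooth function of $(x,U,U')$, and successive differentiations using $f,M\in C^5$ express $U^{(k+2)}$ as a polynomial in the lower derivatives with bounded coefficients, each of which decays exponentially by induction; the corresponding derivatives of $Q$ come from differentiating the algebraic relation $LQ=f(u_\pm)-f(U)$.

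The only real obstacle is the hyperbolicity/dichotomy step, and as noted this is immediate from (A1)--(A4); all other steps are routine bootstrapping. The argument is uniform in the two ends because the same structural hypotheses hold at both $u_\pm$.
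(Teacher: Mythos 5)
Your proof is correct and follows essentially the same route as the paper's: both rest on the integrated relation $LQ=f(u_\pm)-f(U)$ and on reducing to a second-order scalar equation whose linearization at the end states has characteristic polynomial $\mu^{2}+(Lb_\pm/a_\pm)\mu-1$ (you phrase the ODE in $U$, the paper in $Q$), with real roots of opposite sign yielding a hyperbolic saddle. The only difference is that you make explicit the stable-manifold/exponential-dichotomy step and the bootstrap for the higher derivatives $k\le 4$, which the paper's proof leaves implicit.
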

\medskip

\begin{proof}
As $|x|\to +\infty$, defining $a_\pm = a(\pm\infty)$ and  $b_\pm = b(\pm\infty)$, we 
consider the asymptotic system of \eqref{eq:profileeqn}, that is the constant
coefficient linear system 
\begin{equation*}
    \begin{aligned}
        a_\pm U' &= - L Q',\\
       -Q'' + Q &= - b_\pm U',
    \end{aligned}
\end{equation*}
which, by substituting $U'$ into the second equation, becomes
\begin{equation*}
    -Q'' - \frac{ Lb_\pm}{a_\pm}\, Q' + Q =0,
\end{equation*}
or equivalently,
\begin{equation*}
	\begin{pmatrix}Q\\Q'\end{pmatrix}'  
	=A_Q\begin{pmatrix}Q\\Q'\end{pmatrix}, \qquad 
	\textrm{\rm with  }
	A_Q:=\begin{pmatrix}0&1\\1&-Lb_\pm/a_\pm \end{pmatrix},
\end{equation*}
which then gives the exponential decay estimate \eqref{layerdecay}
for $Q$ by the hyperbolicity of the matrix $A_Q$, that is,
eigenvalues of $A_Q$ are distinct and nonzero. Estimates for $U$
follow immediately from those for $Q$ and the relation
\begin{equation*}
    LQ = f(u_{\pm}) - f(U),
\end{equation*}
obtained by integrating the first equation of \eqref{eq:profileeqn}.
\end{proof}

\subsection{Regularity of solutions near $x = 0$}

In this section we establish some analytic properties of the solutions to system 
\eqref{spectralsyst} near the singularity, which will be used during the construction 
of the resolvent kernel in the central Section \ref{sec:resolker} below. 
Introducing the variable $p:=b(x) u - q'$, system \eqref{spectralsyst} takes the 
form of a first-order system, which reads
\begin{equation}\label{eq:linLaplace2}
    \begin{aligned}
    	a(x)u' &= - \left ( \lambda  + a'(x) + L b(x) \right )u +L p,\\
    	q' &= b(x) u - p,\\
    	p' &= - q.
    \end{aligned}   
\end{equation}
For technical reasons which will be clear from the forthcoming analysis, in order to define 
the transmission conditions in the definition of the resolvent kernel (which is defined as 
solutions to the conservative form of system \eqref{eq:linLaplace2} in distributional sense
with appropriate jump conditions; see Section \ref{sec:outline} below), we need $p$ and 
$q$ to be regular across the singularity $x=0$ (having finite limits at both sides), $u$ to have 
(at most) an integrable singularity at that point, namely, that $u \in L^{1}_{loc}$ near zero 
(away from zero it is bounded, so this is trivially true), and that it verifies $a(x)u\to 0$ as $x\to 0$. 
These properties are proved in the next technical lemma.
\medskip

\begin{lemma}\label{lem:tech}
Given $\lambda\in\C$, set $\nu:={\Real \lambda +a'(0) +Lb(0)}/{|a'(0)|}$.
Under assumptions \eqref{A0}-\eqref{A4}, and $\Real \lambda >-Lb(0)$, 
then any solution of \eqref{eq:linLaplace2} verifies
\begin{enumerate}
	\item[1.]  $\displaystyle{ |u(x)|  \leq C\,|x|^{\nu} }$ for $x\sim 0$ and for some $C>0$;
	\item[2.]  $q$ is absolutely continuous and $p$ is $C^{1}$ (for $x\sim 0$),
\end{enumerate}
In particular, $u\in L^{1}_{loc}$ (for $x\sim 0$) and $a(x)u(x)\to 0$ as $x\to 0$.
\end{lemma}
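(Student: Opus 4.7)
The plan is to treat the first equation of \eqref{eq:linLaplace2} as a scalar linear first-order ODE for $u$ with a regular singular point at $x=0$, using the non-singular character of the last two equations to bootstrap regularity for $p$ and $q$. By symmetry I restrict to the one-sided neighborhood $(0, x_0]$ for some small $x_0>0$.

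First I recall that on the punctured interval $(0, x_0]$ the system is a regular $3\times 3$ linear ODE, since $a(x)\neq 0$ there, so every solution is classical and smooth on compact subintervals; in particular $p$ is locally bounded. Next I recast the $u$-equation as
$$u'(x) + \frac{\mu(x)}{a(x)}\,u(x) = \frac{L\,p(x)}{a(x)}, \qquad \mu(x) := \lambda + a'(x) + Lb(x),$$
and observe that $a(x) = a'(0)\,x + O(x^2)$ with $a'(0) < 0$ makes $\mu/a$ have a simple pole at $0$ with residue $\mu(0)/a'(0)$, whose real part equals $-\nu$ by the definition of $\nu$. The integrating factor $E(x) := \exp\bigl(\int_{x_0}^x \mu(s)/a(s)\,ds\bigr)$ therefore satisfies $|E(x)| \asymp |x|^{-\nu}$ as $x \to 0^+$, once the integrand is split into its polar part $(\mu(0)/a'(0))/s$ and a continuous remainder.

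The Duhamel formula then reads
$$u(x) = \frac{E(x_0)}{E(x)}\,u(x_0) - \frac{1}{E(x)}\int_x^{x_0} E(s)\,\frac{L\,p(s)}{a(s)}\,ds.$$
The homogeneous contribution is directly $O(|x|^\nu)$. In the Duhamel term the integrand obeys $|E(s)/a(s)| \leq C\,s^{-\nu-1}$; combined with boundedness of $p$ and elementary integration of $s^{-\nu-1}$ over $[x,x_0]$ (finite when $\nu<0$, of order $x^{-\nu}$ when $\nu>0$), multiplication by $|1/E(x)| \asymp |x|^\nu$ yields in every regime a bound compatible with $|u(x)|\leq C|x|^\nu$ for $x$ close to $0$. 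The remaining claims then fall out: since the hypothesis $\Real\lambda > -Lb(0)$ gives $\nu = (\Real\lambda + Lb(0))/|a'(0)| - 1 > -1$, the bound on $u$ puts $u \in L^1_{loc}$ near $0$ and forces $|a(x)u(x)| \leq C|x|^{\nu+1}\to 0$; the relation $q' = b\,u - p \in L^1_{loc}$ makes $q$ absolutely continuous (hence continuous) on a neighborhood of $0$, whence $p' = -q$ is continuous and $p\in C^{1}$.

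The main obstacle lies in the Duhamel analysis when $\nu \geq 0$, where $E(s)/a(s)$ is not absolutely integrable at $0$; the success of the estimate relies on the exact scaling balance between the growth of $\int_x^{x_0}|E/a|\,ds$ and the blow-up of $|1/E(x)|$, a balance that is transparent from the Fuchsian structure of the equation and that effectively re-expresses the general solution as the smooth Frobenius particular solution plus a homogeneous mode of size $|x|^\nu$.
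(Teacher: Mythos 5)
There is a genuine gap, and it sits exactly at the point the lemma is designed to address: the boundedness of $p$ near $x=0$. Your Duhamel estimate for $u$ uses ``boundedness of $p$'' on $(0,x_0]$, but what you have actually established is only that $p$ is smooth, hence bounded, on compact subintervals $[\epsilon,x_0]$ --- with a bound that could a priori blow up as $\epsilon\to 0^+$. Since $p$ is coupled back to $u$ through $-p''+p=b(x)u$, controlling $p$ uniformly up to $x=0$ requires controlling $u$ (at least in $L^1$) up to $x=0$, which is precisely what you are trying to prove. As written, the argument is circular: a hypothetical solution with $u$ strongly singular at $0$ would feed a singularity into $q'=b u-p$ and hence into $p$, and nothing in your proposal rules this out before the Duhamel step is run.

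The paper closes this loop with a bootstrap that your proposal is missing. One writes $p(x)=C_1e^{-x}+C_2e^{x}+\int_{x_0}^x g(x,y)u(y)\,dy$ with $g$ regular, so that $|p|_{L^\infty(\epsilon,x_0)}\le C_{x_0}\bigl(1+|u|_{L^1(\epsilon,x_0)}\bigr)$ \emph{uniformly in} $\epsilon$; inserting this into the Duhamel formula for $u$ and integrating the kernel (the same $|x/y|^{\nu}$ computation you perform) yields $|u|_{L^1(\epsilon,x_0)}\le C_{x_0}u(x_0)+C_{x_0}x_0\bigl(1+|u|_{L^1(\epsilon,x_0)}\bigr)$, and for $x_0$ fixed sufficiently small the $|u|_{L^1}$ term on the right is absorbed, giving $u\in L^1(0,x_0)$ uniformly in $\epsilon$. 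Only then is $p$ genuinely bounded near $0$, after which your pointwise Duhamel estimate (which is essentially the paper's second step, with the correct exponent $\nu$ read off from the residue of $\mu/a$ at $0$) goes through and delivers $|u(x)|\le C|x|^{\nu}$, $u\in L^1_{loc}$, $a(x)u(x)\to 0$, and the stated regularity of $q$ and $p$. So your overall route is the right one, but you need to insert this $L^1$ absorption argument before invoking any bound on $p$.
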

\medskip

The proof will be done in two steps: (i) first, taking into
account ``elliptic regularity'' in the equation for $p$,
     \begin{equation}
         -p''+p = b(x) u,
         \label{eq:forp}
     \end{equation}
we prove the $L^{1}_{loc}$ bound for $u$ close to zero and the
subsequent regularity for $p$ and $q$; and (ii), using such a bound,
we then prove the pointwise control given in $(i)$.

Alternatively, one can explicitly solve the above elliptic equation for $p$ and get directly 
the pointwise result for $u$ by plugging the relation into the Duhamel formula for $u$.
Finally, such a control gives the $L^{1}_{loc}$ property for $u$ and all other regularity properties.

\begin{proof}[Proof of Lemma \ref{lem:tech}]
Let us consider the case $x\geq 0$, the case $x\leq 0$ being similar. 
Consider a fixed $x_{0}>0$, to be chosen afterwards and let $(u,q,p)$ be any solution 
of (\ref{eq:linLaplace2}) emanating from that point. 
Therefore, from \eqref{eq:forp} we know that
\begin{equation}\label{eq:pformula}
	p(x) = C_{1}e^{-x} + C_{2}e^{x} + \int_{x_{0}}^{x} g(x,y)u(y)dy
\end{equation}
for a given (regular) kernel $g(x,y)$.
Therefore there exists a constant $C_{x_{0}}$ such that for any $\epsilon>0$
\begin{equation*}
	|p|_{L^{\infty}(\epsilon,  x_{0})} \leq C_{x_{0}} (1 + |u|_{L^{1}(\epsilon, x_{0})}).
\end{equation*}
Note that the constant $C_{x_{0}}$ is uniform on $\epsilon$ and it stays bounded as 
$x_{0}$ approaches zero and it depends only on the initial values $p(x_{0})$, 
$q(x_{0}) = - p'(x_{0})$.
Moreover, the Duhamel principle gives for any $x\in [\epsilon,  x_{0}]$:
\begin{align}\label{eq:duhamelu}
	u(x) 	&= u(x_{0}) \exp \left (- \int_{x_{0}}^{x} \frac{\lambda + a'(y)
		+ L\,b(y)}{a(y)}dy \right ) \nonumber\\
	& \ + L \int_{x_{0}}^{x}\frac{1}{a(y)} \exp \left (- \int_{y}^{x} \frac{\lambda + a'(z)
		+ L\,b(z)}{a(z)} dz\right )p(y)dy.
    \end{align}
From \eqref{GNL} we obtain 
\begin{equation*}
      \frac{\lambda + a'(x)+ L\,b(x)}{a(x)} \sim 
      \frac{\lambda + a'(0) + L\,b(0)}{a'(0)x},\ \hbox{for}\ x\sim 0.
\end{equation*}
Hence, for $x\sim 0$,
\begin{equation}\label{eq:integrability}
	\begin{aligned}
      	\exp \left (- \int_{x_0}^{x} \frac{\lambda + a'(y)+L\,b(y)}{a(y)}dy \right ) 
      	& \sim \exp \left (- \int_{x_0}^{x} \frac{\lambda + a'(0)
              + L\,b(0)}{a'(0)y}dy \right )\\
    	& = \left | \frac{x}{x_0} \right |^{-\frac{\lambda + a'(0)
              + L\,b(0)}{a'(0)}}.            
	\end{aligned}
\end{equation}
Hence the first term of \eqref{eq:duhamelu} is integrable in $[0,x_{0}]$ provided 
$\Real \lambda >-L\,b(0)$, being $a'(0)<0$ (our argument applies for 
$\lambda \neq -L\,b(0) - a'(0)$; for $\lambda = -L\,b(0) - a'(0)$ all functions in the 
integrals above are indeed bounded at zero and the proof of the lemma is even simpler).
Thus, for a constant $C_{x_{0}}$ as above,
 \begin{align}\label{eq:l1u1}
	|u|_{L^{1}(\epsilon,  x_{0})} &\leq u(x_{0})C_{x_{0}}
     		+ C_{x_{0}} (1 + |u|_{L^{1}(\epsilon,  x_{0})})\times\nonumber\\
    	& \ \times\int_{\epsilon}^{x_{0}}\int_{x_{0}}^{x}\frac{1}{|a(y)|}
      		\exp \left (- \int_{y}^{x} \frac{\Real \lambda + a'(z)
          	+ L\,b(z)}{a(z)} dz\right )dy\,dx.
\end{align}
Now we use again \eqref{eq:integrability} to estimate the
integral term in \eqref{eq:l1u1} as follows:
\begin{align*}
	 \int_{x_{0}}^{x}\frac{1}{|a(y)|}\,
          	&\exp \left (- \int_{y}^{x} \frac{\Real \lambda + a'(z)
            	+ L\,b(z)}{a(z)} dz\right )dy\\   
	&\sim \int_{x_{0}}^{x}\frac{1}{|a'(0)y|}
                	\left | \frac{x}{y} \right |^{\nu} dy
		= \frac{|a'(0)|\,x^{\nu}}{\Real \lambda + a'(0) + L\,b(0)} 
           	\left (x^{-\nu} -  x_{0}^{-\nu}\right ) \\
  	& = - \frac{1}{\Real \lambda + a'(0) + L\,b(0)} 
		\left (1 -  \left (\frac{x}{x_{0}}\right )^{\nu}\right ).
\end{align*}
Therefore,
\begin{align*}
       |u|_{L^{1}(\epsilon,  x_{0})} &\leq u(x_{0})C_{x_{0}}
           +  C_{x_{0}} (1 +
          |u|_{L^{1}(\epsilon,  x_{0})})x_{0}\nonumber\\
        & \ + C_{x_{0}} (1 +
          |u|_{L^{1}(\epsilon,  x_{0})})x_{0}^{-\nu}\frac{1}{\Real \lambda +L\,b(0)}
          x_{0}^{-\frac{\Real \lambda
           + L\,b(0)}{a'(0)}} \\
        & = u(x_{0})C_{x_{0}}
           +  C_{x_{0}} (1 +
          |u|_{L^{1}(\epsilon,  x_{0})})x_{0}.
   \end{align*}
   Finally, for a sufficiently small, but \emph{fixed} $x_{0}>0$,
   from the above relation we conclude
   \begin{equation*}
       |u(x)|_{L^{1}(\epsilon,  x_{0})} \leq C_{x_{0}}
   \end{equation*}
   uniformly in $\epsilon$, namely, $u\in L^{1}(0,\epsilon_0)$ for $\epsilon_0>0$.
   At this point, part 2.\ of the lemma is an easy consequence of
   expressions \eqref{eq:pformula}, \eqref{eq:linLaplace2}$_{2}$ and
   \eqref{eq:linLaplace2}$_{3}$.

Once we have obtained the $L^{1}_{loc}$ property of $u$ at zero, we
know in particular $|p|_{L^{\infty}(0, x_{0})}$ is bounded. 
Hence we can repeat all estimates on the integral terms of
\eqref{eq:duhamelu} to obtain part 1.\ of the lemma. 
Finally,
\begin{equation*}
        \lim_{x\to 0}a(x)u(x) =0
\end{equation*}
is again a consequence of $\Real \lambda >-Lb(0)$.
\end{proof}

\begin{remark}\label{rem:udecay}\rm
From condition (\ref{eq:profdiff}) it is clear that, for $\Real \lambda<0$, but sufficiently
close to zero, $u(x)$ is not blowing up for $x\to 0$, but it vanishes in that limit,
regardless of the shock strength (the negative term $a'(0)$ approaches zero as the 
strength of the shock tends to zero).
\end{remark}

\section{Construction of the resolvent kernel}\label{sec:resolker}

\subsection{Outline}
\label{sec:outline}

Let us now construct the resolvent kernel for  $\cL$, or equivalently, the solution 
of \eqref{eq:linLaplace2} with delta-function source in the $u$ component. 
The novelty in the present case is the extension of this standard method
to a situation in which the spectral problem can only be written as
a {\it degenerate} first order ODE. 
Unlike the real viscosity and relaxation cases \cite{MaZ1,MaZ3,MaZ4,MaZ5} 
(where the operator ${\mathcal L}$, although degenerate, yields a non-degenerate 
first order ODE in an appropriate reduced space), here we deal with the resolvent
system  for the unknown $W := (u,q,p)^\top$ 
\begin{equation}\label{eqW}
	\left(\Theta(x) W\right)' = \A(x,\lambda)W, 
\end{equation}	
where
\begin{equation*}
	\Theta(x):= \begin{pmatrix} a(x) & 0 \\ 0  & I_2 \end{pmatrix},\qquad
	\A(x):= \begin{pmatrix} 
	 -(\lambda+L\,b(x)) & 0 & L \\ b(x) & 0 & -1 \\ 0  & -1 & 0 
	 	\end{pmatrix},
\end{equation*}
that degenerates at $x=0$.

To construct the resolvent kernel $\cG_\lambda=\cG_\lambda(x,y)$, we solve
\begin{equation}\label{eq:resolker}
	\partial_x\left(\Theta(x)\,\cG_\lambda\right) - \A(x,\lambda)\,\cG_\lambda= \delta_y(x)\,I,
\end{equation}
in the distributional sense, so that
\begin{equation*}
	\partial_x\left(\Theta(x)\,\cG_\lambda\right) - \A(x,\lambda)\,\cG_\lambda = 0,
\end{equation*}
for all $x\ne y$ with appropriate jump conditions (to be determined) at $x=y$. 
The first element in the first row of the matrix-valued function $\cG_\lambda$ is the 
resolvent kernel $G_\lambda$ of $\cL$ that we seek.

\subsection{Asymptotic behavior}

First, we study the asymptotic behavior of solutions to the spectral system
\begin{equation}
\label{specsyst}
	\begin{aligned}
		a(x)u' &= - (\lambda + a'(x) + L\,b(x)) u + Lp,\\
		q' &=b(x) u - p,\\
		p' &= -q,
	\end{aligned}
\end{equation}
away from the singularity at $x=0$, and for values of $\lambda \neq 0$,
$\Real \lambda \geq 0$. 
We pay special attention to the small frequency regime, $\lambda \sim 0$. 
Denote the limits of the coefficients as
\begin{equation*}
	a_\pm := \lim_{x\to\pm\infty} a(x) =\frac{df}{du}(u_\pm), \qquad
	b_\pm := \lim_{x\to\pm\infty} b(x) =\frac{dM}{du}(u_\pm).
\end{equation*}
From the structure of the wave we already have that $a_+ < 0 < a_-$.
The asymptotic system can be written as
\begin{equation}
\label{asympsyst}
 	W' = \A_\pm(\lambda)W,
\end{equation}
where  
\begin{equation*}
	\A_\pm(\lambda) := \begin{pmatrix} -a_\pm^{-1}(\lambda + Lb_\pm) & 0
	& a_\pm^{-1}L \\ b_\pm & 0 & -1 \\ 0 & -1 & 0 \end{pmatrix}.
\end{equation*}
To determine the dimensions of the stable/unstable eigenspaces, 
let $\lambda \in \R^+$, $\lambda \to +\infty$. 
The characteristic polynomial reads
\begin{equation*}
	\pi_\pm(\mu) := |\mu\,I - \A_\pm(\lambda)| 
	=\mu^3 + a_\pm^{-1}(\lambda + Lb_\pm) \mu^2 - \mu - a_\pm^{-1} \lambda,
\end{equation*}
for which
\begin{equation*}
	\frac{d\pi_\pm}{d\mu} = 3\mu^2 + 2a_\pm^{-1}(\lambda + Lb_\pm) \mu -1,
\end{equation*}
has one negative and one positive zero, regardless of the sign of
$a_\pm$, for each $\lambda \gg 1$; they are local extrema of $\pi_\pm$. 
Since $\pi_\pm \to \pm\infty$ as $\mu \to \pm\infty$, $\pi_\pm(0) = -a_\pm^{-1} \lambda$ 
has the opposite sign  with respect to $a_\pm$ and
\begin{equation*}
 \pi_\pm(-a_\pm\,\lambda)=a_\pm\left(a_\pm^2+\frac{1}{a_\pm^4}\right)\,\lambda^3
 +o(\lambda^3)\qquad\lambda\to\infty,
\end{equation*}
so that $\pi_-/\pi_+$ is positive/negative at some negative/positive value of $\mu$,
we get two positive and one negative zeroes for 
$\pi_+$, and two negative and one positive zeroes for $\pi_-$, whenever 
$\lambda \in \R^+$, $\lambda \gg 1$.

We readily conclude that for each $\Real \lambda > 0$, there exist two
unstable eigenvalues $\mu_1^+(\lambda)$ and $\mu_2^+(\lambda)$ with
$\Real \mu > 0$, and one stable eigenvalue $\mu_3^+(\lambda)$ with
$\Real \mu < 0$. The stable $S^+(\lambda)$ and unstable $U^+(\lambda)$
manifolds (solutions which decay, respectively, grow at $+\infty$)
have, thus, dimensions
\begin{equation*}
	\dim U^+(\lambda) = 2,\qquad \dim S^+(\lambda) = 1,
\end{equation*}
in $\Real \lambda > 0$. 
Likewise, there exist two unstable eigenvalues $\mu_1^-(\lambda), \mu_2^-(\lambda)$ 
with $\Real \mu < 0$, and one stable eigenvalue $\mu_3^-(\lambda)$ with $\Real \mu > 0$, 
so that the stable (solutions which grow at $-\infty$) and unstable (solutions which decay 
at $-\infty$) manifolds have dimensions
\begin{equation}
	\dim U^-(\lambda) = 1,\qquad \dim S^-(\lambda) = 2.
\end{equation}

\begin{remark}\label{rem:notconsistent}\rm
Notice that, unlike customary situations in the Evans function
literature \cite{AGJ,ZH,GZ,MaZ1,MaZ3,San}, here the dimensions of
the stable (resp.\ unstable) manifolds $S^+$ and $S^-$ (resp. $U^+$
and $U^-$) \emph{do not agree}. 
Under these considerations, we look at the dispersion relation
\begin{equation*}
	\pi_\pm(i\xi) = -i\xi^3 - a_\pm^{-1}(\lambda + Lb_\pm)\xi^2 
	-i\xi-a_\pm^{-1}\,\lambda = 0.
\end{equation*}
For each $\xi \in \R$, the $\lambda$-roots of last equation define
algebraic curves
\begin{equation*}
	\lambda_\pm(\xi) = - i\,a_{\pm}\,\xi
	-\frac{L\,b_\pm\,\xi^2}{1+\xi^2}, \quad \xi \in \R,
\end{equation*}
touching the origin at $\xi =0$. 
Denote $\Lambda$ as the open connected subset of $\C$ bounded 
on the left by the two curves $\lambda_\pm(\xi)$, $\xi \in \R$.
Since $L\,b_\pm>0$ by assumption (A4), the set 
$\{ \Real \lambda \geq 0, \lambda \neq 0\}$ is properly contained in 
$\Lambda$. 
By connectedness the dimensions of $U^\pm(\lambda)$ and
$S^\pm(\lambda)$ do not change in $\lambda \in \Lambda$. 
We define $\Lambda$ as the set of \emph{(not so) consistent splitting}
\cite{AGJ}, in which the matrices $\A_\pm(\lambda)$ remain hyperbolic, 
with not necessarily agreeing dimensions of stable (resp. unstable) manifolds.
\end{remark}

In the low frequency regime $\lambda \sim 0$, we notice, by taking
$\lambda = 0$, that the eigenvalues behave like those of $\A_\pm(0)$. 
If we define
\begin{equation*}
	\begin{aligned}
		\theta_1^+ &:= \half\big( - a_+^{-1}Lb_+ + \sqrt{a_+^{-2}L^2b_+^2 + 4}\,\big),\\
		\theta_1^- &:= \half\big( a_-^{-1}Lb_+ + \sqrt{a_-^{-2}L^2b_-^2 + 4}\,\big), \\
		\theta_3^+ &:= \half\big( a_+^{-1}Lb_+ + \sqrt{a_+^{-2}L^2b_+^2 + 4}\,\big),\\
		\theta_3^- &:= \half\big( - a_-^{-1}Lb_+ + \sqrt{a_-^{-2}L^2b_-^2 + 4}\,\big),
	\end{aligned}
\end{equation*}
as the decay/growth rates for the fast modes (notice that $\theta_j^\pm > 0$, $j=1,3$), 
then the latter are given by
\begin{align*}
	& \mu_2^\pm(0) = 0,  \\
	& \mu_1^-(0) = -\theta^-_1 < 0 < \theta^+_1 = \mu_1^+(0), \\ 
	& \mu_3^+(0) = -\theta^+_3 < 0 < \theta^-_3 = \mu_3^-(0)\ .
\end{align*}
The associated eigenvectors are given by
\begin{equation*}
	V_j^\pm = \begin{pmatrix} b_\pm^{-1}(1-\mu_j^\pm(0)^2) \\ - \mu_j^\pm(0) \\ 1 \end{pmatrix}.
\end{equation*}
Since the highest order coefficient of $\pi_\pm$ as a polynomial in
$\mu$ is different from zero, then $\lambda = 0$ is a regular point
and whence, by standard algebraic curves theory, there exist
convergent series in powers of $\lambda$ for the eigenvalues. 
For low frequency the eigenvalues of $\A_\pm(\lambda)$ have analytic
expansions of the form
\begin{equation}\label{e-values}
	\begin{aligned}
		\mu_2^\pm(\lambda) &= - \frac{\lambda}{a_\pm} + \cO(|\lambda|^2),\\
		\mu_1^\pm(\lambda) &= \pm \theta_1^\pm + \cO(|\lambda|),\\
		\mu_3^\pm(\lambda) &= \mp \theta_3^\pm + \cO(|\lambda|),
	\end{aligned}
\end{equation}
corresponding to a slow varying mode and two fast modes,
respectively, for low frequencies. 
By inspection, the associated eigenvectors can be chosen as
\begin{equation}\label{evectors} 
	V_j^\pm = \begin{pmatrix} b_\pm^{-1}(1-\mu_j^\pm(\lambda)^2) \\ 
						- \mu_j^\pm(\lambda) \\ 1 \end{pmatrix}.
\end{equation}
Notice, in particular, that for this choice of bases, there hold, for  $\lambda \sim 0$,
\begin{equation*}
	V_2^\pm(\lambda) = \begin{pmatrix} 
		\cO(1) \\ \cO(\lambda) \\ \cO(1) \end{pmatrix}, 
	\qquad	
	\qquad V_j^\pm(\lambda) = \cO(1), \;\;\; j=1,3.
\end{equation*}

\begin{lemma}\label{lem:asymmodes} 
Under the same assumptions as in Theorem \ref{theo-main}, for each 
$\lambda \in \Lambda$, the spectral system \eqref{asympsyst} associated to 
the limiting, constant coefficients asymptotic behavior of \eqref{specsyst}, has 
a basis of solutions
\begin{equation}\label{solns-contsys}
	e^{\mu^\pm_j(\lambda) x} V_j^\pm(\lambda), 
	\quad x \gtrless 0, \: j=1,2,3.
\end{equation}
Moreover, for $|\lambda| \sim 0$, we can find analytic representations for 
$\mu_j^\pm$ and $V_j^\pm$, which consist of two slow modes
\begin{equation*}
	\mu_2^\pm(\lambda) = -a_\pm^{-1}\lambda + \cO(\lambda^2),
\end{equation*}
and four fast modes,
\begin{equation*}
	\mu^\pm_1 (\lambda) = \pm \theta^\pm_1 + \cO(\lambda), 
	\qquad
	\mu^\pm_3 (\lambda) = \mp \theta^\pm_3 + \cO(\lambda),
\end{equation*}
with associated eigenvectors \eqref{evectors}.
\end{lemma}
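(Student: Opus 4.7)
\textbf{Proof plan for Lemma \ref{lem:asymmodes}.}
The plan is to combine the characteristic polynomial analysis already carried out in the preceding paragraphs with standard analytic perturbation of simple eigenvalues. First I would verify that for every $\lambda \in \Lambda$ the matrix $\A_\pm(\lambda)$ is hyperbolic with three \emph{distinct} eigenvalues: the dispersion relation $\pi_\pm(i\xi)=0$ traced through the curves $\lambda_\pm(\xi)$ shows that no eigenvalue crosses the imaginary axis inside $\Lambda$, and since $\Lambda$ is connected and contains a neighborhood of large real $\lambda$ where the three roots are simple and of known signatures, continuity of roots of $\pi_\pm$ together with the intermediate value argument already presented forces the splitting $(2,1)$ at $+\infty$ and $(1,2)$ at $-\infty$ to persist throughout $\Lambda$. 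Once distinctness and hyperbolicity are in hand, standard constant-coefficient ODE theory gives precisely the basis $e^{\mu_j^\pm(\lambda)x}V_j^\pm(\lambda)$ of solutions to \eqref{asympsyst}, with $V_j^\pm$ any non-trivial element of $\ker(\mu_j^\pm I - \A_\pm)$.

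Next I would address the analytic structure near $\lambda=0$. At $\lambda=0$ the roots of $\pi_\pm(\mu)$ are $0,\,\pm\theta_1^\pm,\,\mp\theta_3^\pm$ (computed in the preceding paragraph from the explicit form of $\A_\pm(0)$), and these three values are pairwise distinct because $\theta_1^\pm,\theta_3^\pm>0$ and $\theta_1^\pm\neq\theta_3^\pm$ (their difference equals $\mp a_\pm^{-1}Lb_\pm\neq 0$ by \eqref{A4}). Since the coefficient of $\mu^3$ in $\pi_\pm$ equals $1$, each root is simple and $\lambda=0$ is a regular point of the algebraic curve $\pi_\pm(\mu,\lambda)=0$; the holomorphic implicit function theorem applied to $\partial_\mu\pi_\pm\neq 0$ at each of these roots yields three analytic branches $\mu_j^\pm(\lambda)$ defined in a neighborhood of $0$.

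To pin down the expansions, I would plug the ansatz $\mu_2^\pm(\lambda)=\alpha\lambda+O(\lambda^2)$ into $\pi_\pm(\mu,\lambda)=\mu^3+a_\pm^{-1}(\lambda+Lb_\pm)\mu^2-\mu-a_\pm^{-1}\lambda=0$ and match the $O(\lambda)$ terms: the $\mu^3$ and $\mu^2$ contributions are $O(\lambda^3)$ and $O(\lambda^2)$, so the balance $-\alpha\lambda-a_\pm^{-1}\lambda=0$ forces $\alpha=-a_\pm^{-1}$, as claimed. For the fast branches $\mu_1^\pm(\lambda)=\pm\theta_1^\pm+O(\lambda)$ and $\mu_3^\pm(\lambda)=\mp\theta_3^\pm+O(\lambda)$ analyticity alone gives the leading orders once the $\lambda=0$ roots are identified. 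The eigenvectors \eqref{evectors} are obtained by direct substitution into $(\mu I - \A_\pm)V=0$: using the second and third equations one solves for the second and third components in terms of the first, and the first equation is then automatic by $\pi_\pm(\mu)=0$. Since $\mu_j^\pm(\lambda)$ depend analytically on $\lambda$, so do the $V_j^\pm(\lambda)$, and the distinctness of the eigenvalues guarantees they form a basis.

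The only point requiring any real care, and which I would flag as the main obstacle, is the claim that the splitting is \emph{constant} on all of $\Lambda$ rather than merely locally constant. The cleanest resolution is the one outlined above: show directly that $\A_\pm(\lambda)$ has no purely imaginary eigenvalue for $\lambda\in\Lambda$ (which is tautological from the definition of $\Lambda$ as the complement of the dispersion curves $\lambda_\pm(\xi)$), so the number of eigenvalues in each open half plane is continuous, hence constant, on the connected set $\Lambda$; then evaluate this constant at any convenient point, e.g.\ along $\lambda\in\mathbb{R}^+$ with $\lambda\to+\infty$, where the paragraph preceding the lemma has already done the counting.
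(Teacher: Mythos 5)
Your proposal is correct and follows essentially the same route as the paper: the paper's proof is a one-line pointer back to the preceding computations (the root count of $\pi_\pm$ for large real $\lambda$, the connectedness argument on $\Lambda$ from Remark \ref{rem:notconsistent}, the regular-point/analytic-expansion observation, and direct substitution to verify \eqref{evectors}), all of which you reproduce and elaborate. Your added checks (simplicity of the roots at $\lambda=0$ via $\theta_1^\pm-\theta_3^\pm=\mp a_\pm^{-1}Lb_\pm\neq0$, and the order-$\lambda$ balance giving $\mu_2^\pm=-a_\pm^{-1}\lambda+\cO(\lambda^2)$) are accurate and simply make explicit what the paper leaves implicit.
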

\medskip

\begin{proof} 
The proof is immediate, by directly plugging \eqref{solns-contsys} into 
\eqref{specsyst} and using the previous computations \eqref{e-values}, \eqref{evectors}.
\end{proof}
\medskip

In view of the structure of the asymptotic systems, we are able to
conclude that for each initial condition $x_0> 0$, the solutions to \eqref{specsyst} 
in $x \geq x_0$ are spanned by two growing modes $\{\psi_1^+(x,\lambda), \psi^+_2(x,\lambda)\}$,
and one decaying mode $\{\phi_3^+(x,\lambda)\}$, as $x \to +\infty$,
whereas for each initial condition $x_0 < 0$, the solutions to
\eqref{specsyst} are spanned in $x < x_0$ by two growing modes 
$\{\psi_1^-(x,\lambda), \psi^-_2(x,\lambda)\}$ and one decaying mode
$\{\phi_3^-(x,\lambda) \}$ as $x \to -\infty$.

We rely on the conjugation lemma of \cite{MeZ1} to link such modes
to those of the limiting constant coefficient system \eqref{asympsyst}.
\medskip

\begin{lemma}\label{lem-estmodes} 
Under the same assumptions as in Theorem \ref{theo-main}, for $|\lambda|$ 
sufficiently small, there exist growing $\psi^\pm_j(x,\lambda)$, $j=1,2$, and 
decaying solutions $\phi_3^\pm(x,\lambda)$, in $x \gtrless 0$, of class $C^1$ 
in $x$ and analytic in $\lambda$, satisfying
\begin{equation*}
	\begin{aligned}
		\psi^\pm_j(x,\lambda) &= e^{\mu_j^\pm(\lambda)} V_j^\pm(\lambda)
		(I+ \cO(e^{-\eta|x|})), \quad j=1,2,\\
		\phi^\pm_3(x,\lambda) &= e^{\mu_3^\pm(\lambda)} V_3^\pm(\lambda) 
		(I+ \cO(e^{-\eta|x|})),
	\end{aligned}
\end{equation*}
where $\eta>0$ is the decay rate of the traveling wave, and $\mu_j^\pm$ and 
$V_j^\pm$ are as in Lemma \ref{lem:asymmodes} above.
\end{lemma}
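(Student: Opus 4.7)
The plan is to reduce to a regular (non-singular) first-order system on each half-line bounded away from $x=0$ and then apply the conjugation lemma of Metivier--Zumbrun to transfer the explicit asymptotic basis of Lemma \ref{lem:asymmodes} to the variable-coefficient problem. Fix any $x_{0}>0$ (the singularity at $x=0$ plays no role here, since the claim is local at $\pm\infty$); on $\pm x \geq x_{0}$ we have $a(x)\ne 0$, and dividing the first equation of \eqref{specsyst} by $a(x)$ puts the spectral system in the regular form $W' = B(x,\lambda)W$ with
\begin{equation*}
B(x,\lambda) := \begin{pmatrix} -a(x)^{-1}(\lambda + a'(x) + Lb(x)) & 0 & a(x)^{-1}L \\ b(x) & 0 & -1 \\ 0 & -1 & 0 \end{pmatrix}.
\end{equation*}
By Lemma \ref{lem-expdecay}, the profile $U$ (hence $a,a',b$) converges exponentially at rate $\eta>0$ to its end-states, so $B(x,\lambda)\to \A_\pm(\lambda)$ at rate $\eta$ as $x\to\pm\infty$, uniformly for $\lambda$ in any compact neighborhood of $0$.

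Next, I invoke the conjugation lemma of \cite{MeZ1}: since the limiting coefficient matrices $\A_\pm(\lambda)$ are analytic in $\lambda$ and, by Lemma \ref{lem:asymmodes}, their spectra split into analytic branches $\{\mu_{j}^{\pm}(\lambda)\}_{j=1,2,3}$ for $|\lambda|$ small, there exist transformations $P_{\pm}(x,\lambda) = I + \cO(e^{-\eta|x|})$, jointly $C^{1}$ in $x$ on $\pm x\ge x_{0}$ and analytic in $\lambda$ on a neighborhood of $0$, such that the change of variables $W = P_{\pm}(x,\lambda) Z$ conjugates $W' = B(x,\lambda)W$ to the constant-coefficient system $Z' = \A_\pm(\lambda) Z$. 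Applied to the explicit basis $\{e^{\mu_{j}^{\pm}(\lambda)x}V_{j}^{\pm}(\lambda)\}$ furnished by Lemma \ref{lem:asymmodes}, this yields the sought solutions
\begin{equation*}
\psi_{j}^{\pm}(x,\lambda) := P_{\pm}(x,\lambda)\,e^{\mu_{j}^{\pm}(\lambda)x}V_{j}^{\pm}(\lambda),\quad j=1,2, \qquad \phi_{3}^{\pm}(x,\lambda) := P_{\pm}(x,\lambda)\,e^{\mu_{3}^{\pm}(\lambda)x}V_{3}^{\pm}(\lambda),
\end{equation*}
which automatically satisfy the asymptotic expansion claimed in the statement, are $C^{1}$ in $x$, and are analytic in $\lambda$ (as products/compositions of analytic ingredients). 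Growth/decay as $x\to\pm\infty$ follow from the signs of $\Real\mu_{j}^{\pm}$ recorded just before the lemma.

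The main point to verify is that the hypotheses of the conjugation lemma indeed hold uniformly for $|\lambda|$ small. The exponential convergence rate $\eta$ is inherited from the profile, hence $\lambda$-independent; the spectral gap of $\A_{\pm}(\lambda)$ required by the lemma is guaranteed by the fact that the unperturbed eigenvalues $\mu_{j}^{\pm}(0)$ are distinct (with one of them zero but separated from the two fast modes $\pm\theta_{1,3}^{\pm}$), and distinctness persists on a complex neighborhood of $0$ by analyticity. This is the only subtle step: once the conjugator $P_\pm$ is produced by the standard contraction-mapping construction on $\pm x\ge x_{0}$, everything else is immediate. No extension across $x=0$ is attempted here, since the lemma is a purely asymptotic statement at $\pm\infty$; the matching across the singularity is handled separately in Section \ref{sec:resolker} using the regularity results of Lemma \ref{lem:tech}.
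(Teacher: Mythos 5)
Your proposal is correct and takes essentially the same route as the paper, whose entire proof is a one-line citation of the conjugation lemma of \cite{MeZ1}; you have simply spelled out the reduction to a regular system on $\pm x\ge x_0$, the exponential convergence of coefficients from Lemma \ref{lem-expdecay}, and the verification of the spectral-separation hypothesis, all of which the paper leaves implicit.
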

\medskip

\begin{proof}
This a direct application of the conjugation lemma of \cite{MeZ1}
(see also the related gap lemma in \cite{GZ,ZH,MaZ1,MaZ3}).
\end{proof}
\medskip

As a corollary, and in order to sum up the observations of this
section, we make note that for $\lambda \sim 0$, the
solutions to \eqref{specsyst} in $x \geq x_0 > 0$ are spanned by
\begin{align}
\psi_1^+(x,\lambda) &= e^{(\theta^+_1 + \cO(|\lambda|))x}
\, V_1^+(\lambda)(I + \cO(e^{-\eta|x|})), \quad \;\;\;\;\text{(fast growing)}, \label{gfast+}\\
\psi_2^+(x,\lambda) &= e^{(-\lambda/a_+ + \cO(|\lambda|^2))x} \,
V_2^+(\lambda)(I + \cO(e^{-\eta|x|})), \;\text{(slowly
growing)},\label{gslow+}\\
\phi_3^+(x,\lambda) &= e^{(-\theta^+_3 + \cO(|\lambda|))x} \,
V_3^+(\lambda)(I + \cO(e^{-\eta|x|})), \quad \;\;\text{(fast
decaying)}. \label{dfast+}
\end{align}
Likewise, all the solutions for $x \leq x_0 < 0$ comprise the modes
\begin{align}
\psi_1^-(x,\lambda) &= e^{(-\theta^-_1 + \cO(|\lambda|))x} \,
V_1^-(\lambda)(I + \cO(e^{-\eta|x|})), \quad \;\;\text{(fast growing)}, \label{gfast-}\\
\psi_2^-(x,\lambda) &= e^{(-\lambda/a_- + \cO(|\lambda|^2))x} \,
V_2^-(\lambda)(I + \cO(e^{-\eta|x|})), \;
\text{(slowly growing)},\label{gslow-}\\
\phi_3^-(x,\lambda) &= e^{(\theta^-_3 + \cO(|\lambda|))x}
V_3^-(\lambda) \,(I + \cO(e^{-\eta|x|})), \quad
\;\;\;\;\,\text{(fast decaying)}. \label{dfast-}
\end{align}
The analytic coefficients $V^\pm_j(\lambda)$ are given by \eqref{evectors}.

\subsection{Solutions near $x \sim 0$}

Our goal now is to analyze system \eqref{specsyst} close to the singularity $x=0$. 
For concreteness, let us restrict the analysis to the case $x>0$. 
We introduce a ``stretched'' variable $\xi$ as follows: fix $\epsilon_0 > 0$ and let
\begin{equation*}
    \xi = \int_{\epsilon_0}^{x}\frac{dz}{a(z)},
\end{equation*}
so that $\xi(\epsilon_0) = 0$, and $\xi \to +\infty$ as $x \to 0^+$.
Under this change of variables we get
\begin{equation*}
    u' = \frac{du}{dx} = \frac{1}{a(x)}\frac{du}{d\xi} =
    \frac{1}{a(x)}\dot{u},
\end{equation*}
after denoting $\;\dot{ }$ $= d/d\xi$.
In the stretched variables, system \eqref{eq:linLaplace2} becomes
\begin{equation*} 
	\dot{W} = \tilde\A(\xi,\lambda) W
	\qquad\textrm{where}\quad
	\tilde\A(\xi,\lambda) := \begin{pmatrix}
	-\omega & 0 & L \\ \tilde a\, \tilde b & 0 & -\tilde a \\ 
	0 & -\tilde a & 0 \end{pmatrix},
\end{equation*}
and functions $\omega, \tilde a, \tilde b$ are defined by
\begin{equation*}
		\omega(\xi):= \lambda + a'(x(\xi)) + L\,b(x(\xi)),\qquad
		\tilde a(\xi):= a(x(\xi)), \qquad \tilde b(\xi):= b(x(\xi)).
\end{equation*}
Note that from \eqref{eq:profdiff}, for small frequencies $\lambda \sim 0$, 
and choosing $ 0 < \epsilon_0 \ll 1$ sufficiently small we have the uniform bound
\begin{equation*}
	\Real \omega(\xi) \sim \Real \omega(0) = 
	\eta: = \Real \lambda + a'(0) + L\,b(0) > 0,
\end{equation*}
for all $\xi \in [0,+\infty)$.
In addition, we have
\begin{equation*}
	\omega_\xi = \tilde a(\xi) (a''(x(\xi)) + Lb'(x(\xi))) = \cO(|\tilde a(\xi)|).
\end{equation*}
Next, we apply the transformation $Z := {\mathbb{L}} W$ where
\begin{equation*}
	{\mathbb{L}} := \begin{pmatrix} 1 & 0 & -L/\omega \\ 0 & 1 & 0 \\ 0 & 0 & 1 \end{pmatrix}
	\qquad\textrm{and}\qquad
	{\mathbb R}:={\mathbb{L}}^{-1}
		= \begin{pmatrix} 1 & 0 & L/\omega \\ 0 & 1 & 0 \\ 0 & 0 & 1 \end{pmatrix}.
\end{equation*}
Since
\begin{equation*}
	|\dot{{\mathbb{L}}}\R| = |{\mathbb{L}}\dot{\R}| = \cO(|\tilde a|),\qquad\textrm{and}\qquad
	\dot{{\mathbb{L}}} =  \begin{pmatrix}  0& 0& L\omega_\xi / \omega^2 \\
		0 & 0& 0\\	0& 0& 0	\end{pmatrix} = \tilde a\, \cO(1),
\end{equation*}
we obtain a block-diagonalized system at leading order of the form
\begin{equation}\label{eq:strechtedsyst} 
	\dot{Z} = \begin{pmatrix} -\omega & 0 \\ 0&
	0 \end{pmatrix}Z + \tilde a\, \Theta(\xi) Z,
\end{equation}
where
\begin{equation*}
	\Theta = \begin{pmatrix} 0 & L/\omega & L(a''+L\,b')/\omega^2\\ 
	\tilde b & 0 & -1 + L\,\tilde b / \omega \\ 0 & -1 & 0
	\end{pmatrix}
\end{equation*}
is uniformly bounded. 
The blocks $-\omega I$ and $0$ are clearly spectrally separated and the 
error is of order $\cO(|\tilde a(\xi)|) \to 0$ as $\xi \to +\infty$. 
System \eqref{eq:strechtedsyst} has the form \eqref{eq:blockdiag} of 
Appendix \ref{firstAppendix} (block-diagonal  at leading order) and satisfies 
the hypotheses of the  pointwise reduction lemma (see Proposition \ref{pwrl} below). 
In our case, there is no dependence on a parameter $\epsilon$, 
$M_2 =-\omega I$, $M_1 \equiv 0$ and the pointwise error is
$\delta(\xi) = \tilde a (\xi)$, with constant spectral gap $\eta$. 

Hence, there exist analytic transformations $\Phi_j(\xi,\lambda)$, $j =1,2$, 
satisfying the  pointwise bound \eqref{ptwise}, for which the graphs
$\{(Z_1,\Phi_2(Z_1))\}$, $\{(\Phi_1(Z_2),Z_2)\}$ are invariant under
the flow of \eqref{eq:strechtedsyst}. 
We now take a closer look at the pointwise error \eqref{ptwise}.
\medskip

\begin{lemma}\label{lem-pttracking}
For the stretched system and for low frequency $\lambda \sim 0$,
there holds 
\begin{equation} \label{eq:errorOa}
	|\Phi_j(\xi,\lambda)|\le C\, \tilde a(\xi), \quad j=1,2,
\end{equation}
provided that
\begin{equation} \label{newcondition} 
	Lb(0) + 2a'(0) > 0.
\end{equation}
\end{lemma}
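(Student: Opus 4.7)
The plan is to apply the pointwise reduction lemma (Proposition \ref{pwrl}) to the block-diagonal-at-leading-order system \eqref{eq:strechtedsyst} and then unpack the resulting convolution bound using the explicit exponential decay rate of $\tilde a(\xi)$. The scheme has three ingredients: identifying the spectral gap $\eta$ between the two diagonal blocks, computing the pointwise decay of the perturbation $\delta(\xi)=\tilde a(\xi)$, and comparing the two rates.

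First I would determine the decay rate of $\tilde a$. Since $a(x)\sim a'(0)\,x$ near $x=0$ and $a'(0)<0$, the definition $\xi=\int_{\epsilon_0}^{x}dz/a(z)$ gives $x(\xi)\sim \epsilon_0 \exp(-|a'(0)|\,\xi)$ as $\xi\to+\infty$, whence
\begin{equation*}
    \tilde a(\xi) = a(x(\xi)) = \mathcal{O}\bigl(e^{-|a'(0)|\,\xi}\bigr).
\end{equation*}
Simultaneously, the diagonal blocks $M_2=-\omega$ and $M_1\equiv 0$ of \eqref{eq:strechtedsyst} have spectra separated by $|\omega(\xi)|$. From the uniform bound $\Real\omega(\xi)\sim \Real\omega(0)=\Real\lambda+a'(0)+Lb(0)$ (valid after shrinking $\epsilon_0$), the spectral gap at $\lambda\sim 0$ is effectively $\eta\approx a'(0)+Lb(0)$, which is strictly positive by (A$5_1$).

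Next I would invoke Proposition \ref{pwrl} with pointwise error $\delta(\xi)=\tilde a(\xi)$ and constant spectral gap $\eta$. The proposition produces analytic $\Phi_j(\xi,\lambda)$ whose graphs block-diagonalize the flow and which satisfy a convolution-type estimate of the form
\begin{equation*}
    |\Phi_j(\xi,\lambda)| \le C\int_{0}^{\infty} e^{-\eta\,|\xi-s|}\,\delta(s)\,ds.
\end{equation*}
Inserting $\delta(s)\le C e^{-|a'(0)|\,s}$ and splitting the integral at $s=\xi$, the two halves evaluate to
\begin{equation*}
    \frac{C\,e^{-|a'(0)|\,\xi}}{\eta-|a'(0)|}\qquad\text{and}\qquad
    \frac{C\,e^{-|a'(0)|\,\xi}}{\eta+|a'(0)|},
\end{equation*}
respectively, each yielding the required bound $|\Phi_j(\xi,\lambda)|\le C e^{-|a'(0)|\,\xi}\le C\tilde a(\xi)$, \emph{provided} the denominator $\eta-|a'(0)|$ remains strictly positive.

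The crux, and the reason for hypothesis \eqref{newcondition}, is precisely this inequality. For $\lambda\sim 0$ the spectral gap reduces to $\eta\approx Lb(0)+a'(0)$, so $\eta>|a'(0)|=-a'(0)$ is equivalent to $Lb(0)+2a'(0)>0$. The main technical obstacle is therefore not the application of the tracking lemma itself but the sharp matching of the spectral gap against the intrinsic exponential decay rate of the singular coefficient: without \eqref{newcondition}, the first half of the split integral is borderline divergent and one recovers only a logarithmically weaker bound (a factor $\xi$), which would be insufficient for the pointwise resolvent estimates developed later. Under \eqref{newcondition}, however, the estimate is sharp and \eqref{eq:errorOa} follows.
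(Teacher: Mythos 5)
Your proposal is correct and follows essentially the same route as the paper: both apply the pointwise reduction lemma (Proposition \ref{pwrl}) with pointwise error $\delta=\tilde a$ and spectral gap $\eta=\Real\lambda+a'(0)+Lb(0)$, and both extract \eqref{newcondition} from the requirement $\eta+a'(0)>0$ (equivalently $\eta>|a'(0)|$) needed for the convolution with $e^{-\eta\cdot}$ to preserve the decay rate of $\tilde a$. The only cosmetic difference is that you evaluate the integral in the stretched variable $\xi$ using $\tilde a(\xi)=\mathcal{O}(e^{-|a'(0)|\xi})$, whereas the paper changes back to the original variable $x$ and arrives at the equivalent bound $C\,a(x)/(\eta+a'(0))$.
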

\medskip

\begin{proof}
From Proposition \ref{pwrl}, there holds the pointwise bound \eqref{ptwise}, 
namely
\begin{equation*}
	|\Phi_j(\xi,\lambda)| \leq C \int_0^\xi e^{-\eta(\xi-y)} \tilde a(y) \, dy,
\end{equation*}
which in terms of the original variables looks like
\begin{equation*}
	|\tilde \Phi_j(x,\lambda)| := |\Phi_j(\xi(x),\lambda)| \leq C \int_{\epsilon_0}^x
	\exp \Big(\eta \int_x^{\tilde x} \frac{dz}{a(z)}\Big) \, d\tilde x.
\end{equation*}
Since for $z$ small, $a(z) \sim a'(0)z$, we get
\begin{align*}
	|\tilde \Phi_j(x,\lambda)| &\lesssim 
	\int_{\epsilon_0}^x \exp \Big( \eta \int_x^{\tilde x} \frac{dz}{a'(0)z} \Big) \, d \tilde x
	= \frac{C\, a'(0)}{\eta + a'(0)} \big( x - \epsilon_0 (x/\epsilon_0)^{\eta/|a'(0)|}\big) \\
	&\leq \frac{C\, a'(0) x}{\eta + a'(0)} \sim \frac{C\,a(x)}{\eta + a'(0)},
\end{align*}
in view of $0 < x < \epsilon_0$, and as long as $a'(0) + \eta > 0$.
Since $\eta = \Real \lambda + a'(0) + Lb(0)$, 
condition \eqref{newcondition} implies \eqref{eq:errorOa} for small $\lambda$.
\end{proof}
\medskip

\begin{remark}\rm
Notice that \eqref{newcondition} is a stronger condition than \eqref{eq:profdiff}, 
which is inherited by the existence result of \cite{LMS1} or Theorem \ref{existencethm}. 
Notably, this new condition \eqref{newcondition} holds if we assume (A$5_2$).
\end{remark}

In view of the pointwise error bound \eqref{eq:errorOa} of order $\cO(a)$ and by the 
pointwise reduction lemma (see Proposition \ref{pwrl} and Remark \ref{rem:reduced} below), 
we can separate the flow into slow and fast coordinates. 
Indeed, after proper transformations we separate the flows on the reduced manifolds of form
\begin{equation}\label{reduced-eq}
	\begin{aligned}
		\dot{Z_1} &= - \omega\,Z_1 + \cO(\tilde a) Z_1,\\
		\dot{Z_2} &= \cO(\tilde a) Z_2.
	\end{aligned}
\end{equation}
Observe that the $Z_1$ modes decay to zero as $\xi \to +\infty$, in view of
\begin{equation*}
	e^{-\int_0^\xi \omega(z) \, dz} \lesssim e^{-(\Real \lambda + \half \eta) \xi} \to 0,
\end{equation*} 
as $\xi \to +\infty$. 
These fast decaying modes correspond to fast decaying to zero solutions when $x
\to 0^+$ in the original $u$-variable. 
The $Z_2$ modes comprise slow dynamics of the flow as $x \to 0^+$.
\medskip

\begin{proposition} \label{prop:smallep} 
Under assumptions \eqref{A0} - \eqref{A4}, and $\mathrm{(A}5_2\mathrm{)}$, 
there exists $0 < \epsilon_0 \ll 1$ sufficiently small, such that, in the small frequency 
regime $\lambda \sim 0$, the solutions to the spectral system \eqref{specsyst} in 
$(-\epsilon_0,0) \cup (0,\epsilon_0)$ are spanned by fast modes
\begin{equation*}\label{w2} 
	w_2^\pm(x,\lambda) 
	= \begin{pmatrix} u_2^\pm\\  q_2^\pm \\  p_2^\pm \end{pmatrix} 
	=\begin{pmatrix} Z_1(x)\\ 0 \\ 0 \end{pmatrix} (1+\cO(a(x))), 
	\qquad \pm\epsilon_0 \gtrless x \gtrless 0,
\end{equation*} 
where $Z_1$ is the mode of \eqref{reduced-eq}, decaying to zero as $x \to 0^\pm$, 
and slowly varying modes
\begin{equation*}
	z_j^\pm(x,\lambda) 
	= \begin{pmatrix} u_j^\pm \\ q_j^\pm \\  p_j^\pm \end{pmatrix},  
	\qquad \pm\epsilon_0 \gtrless x \gtrless 0, \quad j=1,3,\label{z13}
\end{equation*}
with bounded limits as $x \to 0^\pm$.
Moreover, the fast modes defined above decay as
\begin{equation}\label{decayu2} 
	u_2^\pm \sim |x|^{\nu} \to 0, \qquad
	\begin{pmatrix} q_2^\pm \\ p_2^\pm \end{pmatrix} 
	\sim \cO(|x|^\nu a(x)) \to 0,
\end{equation}
as $x \to 0^\pm$, where $\nu: =(\Real\lambda + a'(0)+Lb(0))/|a'(0)|$.
\end{proposition}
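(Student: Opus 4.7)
The plan is to carry out the block-diagonal/slow-fast reduction in the stretched variable $\xi$, explicitly solve the decoupled scalar/2-dimensional equations, and then translate the resulting estimates back to the original coordinates $x$ and $(u,q,p)$. Throughout I restrict to $x\in(0,\epsilon_0)$; the case $x\in(-\epsilon_0,0)$ is analogous after the obvious sign changes, using $a'(0)<0$ in both.

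First, I apply the transformation $Z=\mathbb{L}W$ that has already been constructed before the statement, so that the stretched system \eqref{eq:strechtedsyst} is block-diagonal at leading order with spectrally separated blocks $-\omega$ and $0$ and pointwise error $\delta(\xi)=\tilde a(\xi)$. Under assumption $\mathrm{(A5_2)}$, Lemma \ref{lem-pttracking} furnishes the pointwise bound $|\Phi_j(\xi,\lambda)|\le C\,\tilde a(\xi)$, so the pointwise reduction lemma (Proposition \ref{pwrl}) gives analytic, invariant fast and slow graphs on which the dynamics reduce to \eqref{reduced-eq}. The $1$-dimensional fast manifold yields one mode, call it $Z_1$; the $2$-dimensional slow manifold yields the remaining two modes, call them $Z_2^{(1)},Z_2^{(3)}$ (basis of solutions of $\dot Z_2=\mathcal O(\tilde a)Z_2$).

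Next I estimate each reduced mode. For the fast one, direct integration of $\dot Z_1=(-\omega+\mathcal O(\tilde a))Z_1$ gives
\begin{equation*}
Z_1(\xi)=Z_1(0)\exp\!\Bigl(-\!\int_0^\xi\omega(z)\,dz+\!\int_0^\xi\mathcal O(\tilde a(z))\,dz\Bigr).
\end{equation*}
Changing variables back to $x$ via $d\xi=dy/a(y)$, the $\mathcal O(\tilde a)$ integral becomes $\int_{\epsilon_0}^x\mathcal O(1)\,dy$, which is uniformly bounded on $(0,\epsilon_0)$, while for the principal part, using $\omega(\xi)\to\eta:=\Real\lambda+a'(0)+Lb(0)$ as $\xi\to\infty$ and $a(y)\sim a'(0)y$,
\begin{equation*}
\int_0^\xi\omega\,dz=\int_{\epsilon_0}^{x}\frac{\omega(\xi(y))}{a(y)}\,dy
\sim \frac{\eta}{a'(0)}\ln(x/\epsilon_0)=-\nu\ln(x/\epsilon_0),
\end{equation*}
with $\nu=\eta/|a'(0)|$. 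Exponentiating gives $Z_1\sim x^\nu$, which vanishes as $x\to 0^+$ precisely at the claimed rate. For the slow modes, the same change of variables turns $\int_0^\xi\mathcal O(\tilde a(z))\,dz$ into $\int_{\epsilon_0}^x\mathcal O(1)\,dy$, which is uniformly bounded, so $Z_2^{(j)}(\xi)$ has a finite, nonzero limit as $\xi\to\infty$ (equivalently as $x\to 0^+$); choosing two independent initial data at $\xi=0$ yields two linearly independent slow modes.

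Finally, I transfer these back to $W=(u,q,p)^\top$. On the fast invariant graph, the slow coordinates are $\Phi_2(Z_1)=\mathcal O(\tilde a)\,Z_1=\mathcal O(a(x))\,x^\nu$, so in $Z$-coordinates the fast solution reads $(Z_1,0,0)^\top+\mathcal O(a(x))\,x^\nu$; applying $\mathbb{L}^{-1}$ (which acts as the identity on the first and second components and adds only an $L/\omega$ multiple of the third component to the first) yields the claimed form of $w_2^\pm$, with $u_2^\pm=Z_1(1+\mathcal O(a))\sim |x|^\nu$ and $(q_2^\pm,p_2^\pm)=\mathcal O(|x|^\nu a(x))$, matching \eqref{decayu2}. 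For the slow modes, the fast coordinate on the slow graph is $\Phi_1(Z_2)=\mathcal O(\tilde a)\,Z_2=\mathcal O(a(x))$, which is bounded, and applying $\mathbb{L}^{-1}$ preserves boundedness, giving the two bounded modes $z_1^\pm, z_3^\pm$.

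The main technical point is the step that converts the pointwise bound $|\Phi_j|\le C\tilde a$ plus the block-diagonal reduction into the sharp power law $|x|^\nu$; this requires $\mathrm{(A5_2)}$ in two places---once in Lemma \ref{lem-pttracking} to control $\Phi_j$, and once implicitly to guarantee that $\eta>0$ so that the exponential $e^{-\int\omega}$ actually decays and produces the correct exponent $\nu$. Everything else is bookkeeping in going back through $\mathbb L^{-1}$ and the change of variables $\xi\leftrightarrow x$.
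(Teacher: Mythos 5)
Your proposal is correct and follows essentially the same route as the paper: the paper's proof simply cites the pointwise tracking/reduction machinery (Lemma \ref{lem-pttracking} and Proposition \ref{pwrl}) applied to the reduced system \eqref{reduced-eq}, with the $|x|^{\nu}$ rate obtained exactly as in Lemma \ref{lem:tech}, and your explicit integration of $\dot Z_1=(-\omega+\cO(\tilde a))Z_1$ reproduces that same integrating-factor computation. The only cosmetic difference is that you attribute the positivity of $\eta$ to (A$5_2$), whereas it already follows from the weaker structural inequality \eqref{eq:profdiff}; (A$5_2$) is needed only for the $\cO(\tilde a)$ bound on $\Phi_j$ in Lemma \ref{lem-pttracking}.
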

\medskip

\begin{proof}
This is a direct consequence of applying our pointwise tracking lemma (Lemma
\ref{lem-pttracking}) to the reduced system \eqref{reduced-eq}. 
The claimed estimate \eqref{decayu2} for $u$ follows in the same way as
done in Lemma \ref{lem:tech}.
\end{proof}

\subsection{Decaying modes}

We next derive explicit representation formulae for the resolvent
kernel $\cG_\lambda(x,y)$ using the classical construction in terms
of decaying solutions of the homogeneous spectral problem, matched
across the singularity by appropriate jump conditions at $x=y$. 
The novelty of our approach circumvents the inconsistency between the
number of decaying modes at $\pm \infty$. 
In this section we describe how to construct all decaying solutions at each 
side of the singularity with matching dimensions.

Choose $\epsilon_0>0$ small enough so that the representations of
the solutions of Proposition \ref{prop:smallep} hold. We are going
to construct two decaying modes $W^+_j$, $j =1,2$ at $+\infty$, and
one decaying mode $W^-_3$ at $-\infty$. 
For that purpose, we choose the decaying mode at $-\infty$ as
\begin{equation}\label{eq:defW3} 
	W^-_3(x,\lambda) := \begin{cases} 
	\phi^-_3(x,\lambda), & x < -\epsilon_0, \\ 
	(\gamma_1 z_1^- + \gamma_3 z_3^- + \gamma_2 w_2^-) (x,\lambda), 
	& -\epsilon_0 <x<0.
	\end{cases}
\end{equation}
where the coefficients $\gamma_j = \gamma_j(\lambda)$ are analytic
in $\lambda$ and such that $W^-_3$ is of class $C^1$ in all $x < 0$.

To select the decaying modes at $+\infty$, consider
\begin{equation}\label{eq:defW2+} 
	W_2^+(x,\lambda) := \begin{cases} 0, & x > 0, \\
	w_2^-(x,\lambda), &-\epsilon_0<x < 0,\\
	(\kappa_1 \psi_1^- + \kappa_2 \psi_2^- + \kappa_3
	\phi_3^-) (x,\lambda), & x< -\epsilon_0 .
	\end{cases}
\end{equation}
where $w_2^-$ is the vanishing at $x=0$ solution in \eqref{w2} 
(the solution is, thus, continuous at $x=0$), and the coefficients
$\kappa_j = \kappa_j(\lambda)$ are analytic in $\lambda$,
and such that the matching is of class $C^1$ a.e. in $x$.

Finally, we define
\begin{equation}\label{eq:defW1+} 
	W_1^+(x,\lambda) := \begin{cases}
	\phi^+_3(x,\lambda), & x > \epsilon_0, \\ (\alpha_1 z_1^+ + \alpha_3
	z_3^+ + \alpha_2 w_2^+) (x,\lambda), & 0 < x < \epsilon_0,
	\\(\beta_1 z_1^- + \beta_3 z_3^- + \beta_2 w_2^-) (x,\lambda), &
	-\epsilon_0<x<0 \\
	(\delta_1 \psi_1^- + \delta_2 \psi_2^- + \delta_3 \phi_3^-)
	(x,\lambda), & x< -\epsilon_0 .
	\end{cases}
\end{equation}
as the other decaying mode at $+\infty$, with analytic coefficients
$\alpha_j$, $\beta_j$, $\delta_j$ in $\lambda$, and $W^+_1$ is of
class $C^1$ a.e. in $x$.

\begin{remark}\rm
A similar definition of two decaying modes $W^-_3, W^-_2$ at
$-\infty$ and one decaying mode $W^+_1$ at $+\infty$, on the
positive side of the singularity, is clearly available. See Figure \ref{twoevansfigure}.
\end{remark}

\begin{figure}[h]
\includegraphics[width=6cm]{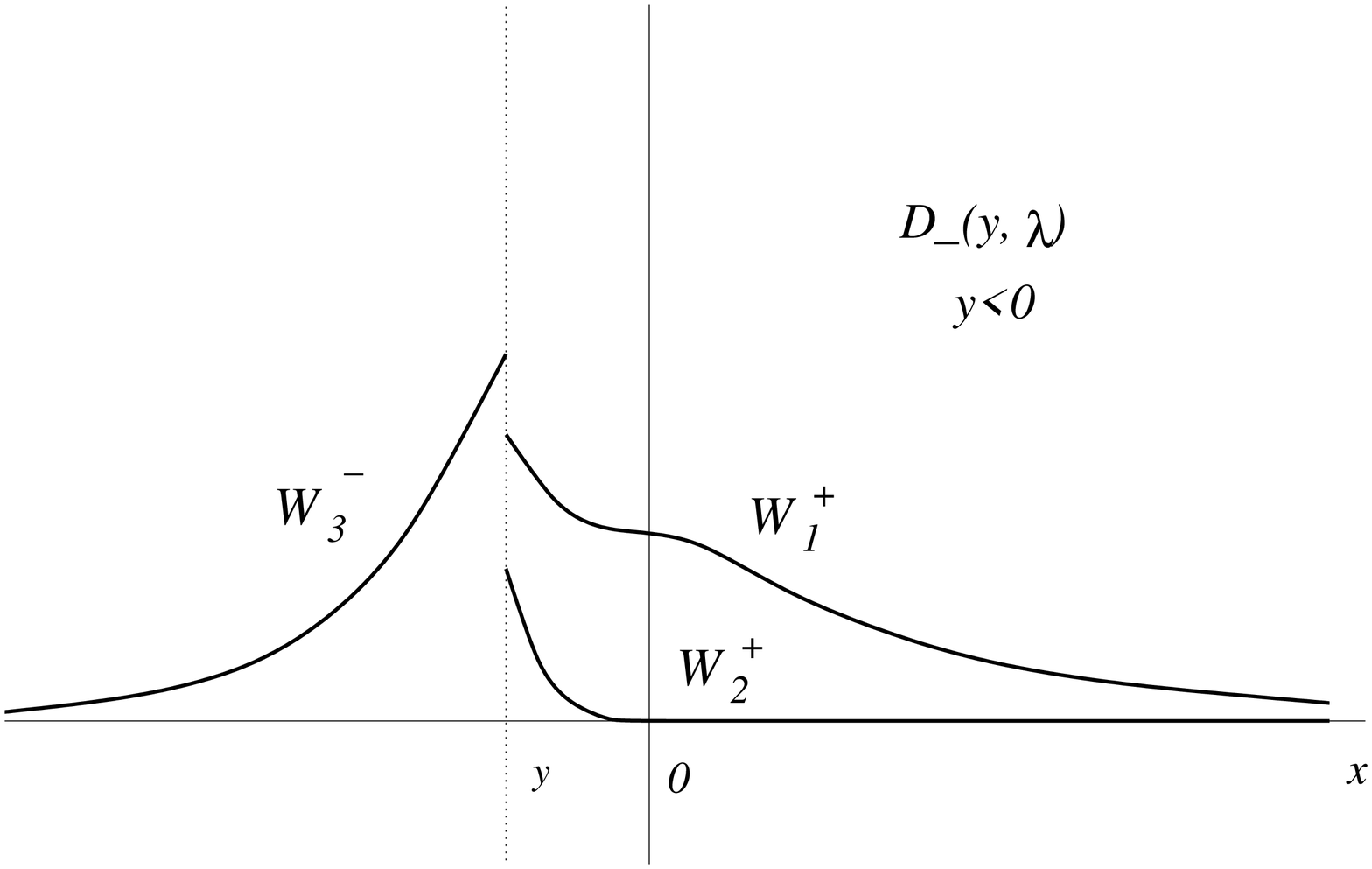}\quad
\includegraphics[width=6cm]{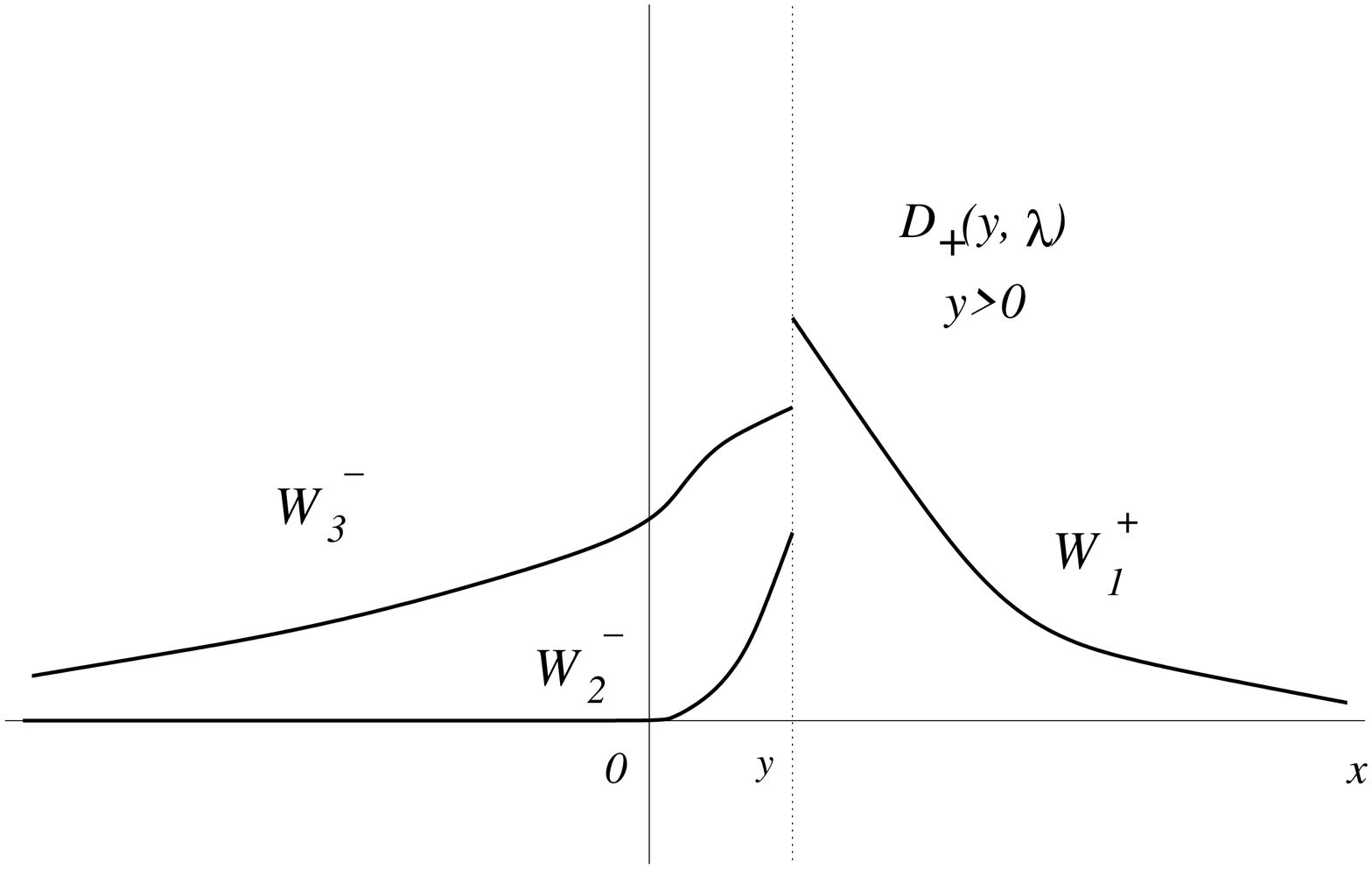}
\caption{\footnotesize \textit{Two Evans functions:} 
Representation of the decaying modes at $\pm\infty$ for $y \gtrless 0$. 
The first picture (left) considers the case $y < 0$. 
The sole decaying mode $W_3^-$ at $- \infty$ is represented as the fast decaying
solution on the left; the decaying modes at $+\infty$ are two: the exponentially fast 
decaying solution $W_1^+$, and the constructed mode $W_2^+$, which is identically 
zero for $x > 0$ and matched across the singularity to the solution which decays to zero 
as $x \to 0^-$ in the region $(y,0)$. 
This provides a full set of decaying modes for $y < 0$. 
A symmetric construction for the $y > 0$ case is depicted in the second picture (right).}
\label{twoevansfigure} 
\end{figure}

\subsection{Two Evans functions}

We first define two related Evans functions
\begin{equation*}
	D_\pm(y,\lambda) := \det (W_1^+ \; W_2^{\mp} \;
	W^-_3)(y,\lambda), \qquad \mbox{for  } y \gtrless 0,
\end{equation*} 
where $W_j^\pm = (u_j^\pm,q_j^\pm,p_j^\pm)^\top$ is defined as 
above (see \eqref{eq:defW1+},\eqref{eq:defW2+}, and \eqref{eq:defW3}).

We first observe the following simple properties of $D_\pm$.
\medskip

\begin{lemma}\label{lem-Evansfns}
For $\lambda$ sufficiently small, we have
\begin{equation}\label{Evansfns2}
	\begin{aligned}
		D_\pm(y,\lambda)&=-a(y)^{-1}\lambda[u]
		\det\begin{pmatrix}q_1^+&q_2^\mp\\
		p_1^+&p_2^\mp\end{pmatrix}_{|_{\lambda=0}}
		+ \cO(|\lambda|^2),
	\end{aligned}
\end{equation} 
where $[u] = u_+ - u_-$.
\end{lemma}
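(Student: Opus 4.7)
The plan is to exploit a hidden conservation law at $\lambda = 0$. Directly combining the three equations of the spectral system \eqref{eq:linLaplace2} yields
\begin{equation*}
(au + Lq)' = -\lambda u,
\end{equation*}
so that $v := au + Lq$ is conserved along solutions at $\lambda = 0$. Since $W_1^+$ and $W_3^-$ decay at $\pm\infty$, they satisfy $v \equiv 0$ at $\lambda = 0$. For the constructed mode $W_2^\mp$ the same holds: on the side where it is identically zero this is trivial, while on the other side the fast-mode asymptotics $u_2^\mp \sim |x|^\nu$ and $q_2^\mp \sim |x|^\nu a(x)$ from Proposition \ref{prop:smallep} force $v(0^\mp) = 0$, and conservation propagates this across the whole side.

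First I would perform the linear change of variables $(u,q,p) \mapsto (v,q,p)$, whose Jacobian has determinant $a(y)$, so that
\begin{equation*}
D_\pm(y, \lambda) = a(y)^{-1} \det \begin{pmatrix} v_1^+ & v_2^\mp & v_3^- \\ q_1^+ & q_2^\mp & q_3^- \\ p_1^+ & p_2^\mp & p_3^- \end{pmatrix}.
\end{equation*}
At $\lambda = 0$ the first row vanishes identically, so $D_\pm(y,0) = 0$; this extracts the overall factor of $\lambda$. To compute the first-order coefficient, I would integrate $(v_j)' = -\lambda u_j$ from the appropriate base point, obtaining $v_1^+(y,\lambda) = \lambda \int_y^{+\infty} u_1^+(z,\lambda)\,dz$, $v_3^-(y,\lambda) = -\lambda \int_{-\infty}^y u_3^-(z,\lambda)\,dz$, with $v_2^\mp(y,\lambda) = \cO(\lambda)$ by integration from $x = 0$.

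The crucial structural input is that both $W_1^+(\cdot,0)$ and $W_3^-(\cdot,0)$ are scalar multiples of the translation eigenmode $\Psi_0 := (-U', -Q', Q)$. Indeed, each coincides (via the conjugation lemma) with $\phi_3^\pm(\cdot,0)$ at the appropriate end, which is the unique exponentially decaying asymptotic solution; since $\Psi_0$ itself decays at both $\pm\infty$, by uniqueness of ODE solutions it must agree with these (up to scaling) on all of $\R$. This proportionality has two consequences. \emph{First}, the minor $\det\bigl(\begin{smallmatrix} q_1^+ & q_3^- \\ p_1^+ & p_3^- \end{smallmatrix}\bigr)$ vanishes at $\lambda = 0$ (its two columns are parallel), so the $v_2^\mp$ term in the cofactor expansion along the first row drops out to leading order in $\lambda$. \emph{Second}, substituting $u_j(z,0) \propto -U'(z)$ for $j = 1,3$ collapses the integrals to expressions of the form $c_1(U(y)-u_+)$ and $c_3(U(y)-u_-)$, respectively.

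With only the $v_1^+$ and $v_3^-$ contributions surviving, the proportionality of $W_1^+$ and $W_3^-$ at $\lambda = 0$ lets me factor both remaining cofactors as scalar multiples of the single $2\times 2$ determinant $\det\bigl(\begin{smallmatrix} q_1^+ & q_2^\mp \\ p_1^+ & p_2^\mp \end{smallmatrix}\bigr)_{|\lambda=0}$, after which the algebraic identity $(U(y)-u_+) - (U(y)-u_-) = -[u]$ produces the asserted leading coefficient $-a(y)^{-1}\lambda[u]\det\bigl(\begin{smallmatrix} q_1^+ & q_2^\mp \\ p_1^+ & p_2^\mp \end{smallmatrix}\bigr)_{|\lambda=0}$. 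The main obstacle I anticipate is bookkeeping the normalization constants $c_1, c_3$ so that their product cancels the overall sign correctly, and verifying rigorously that $v_2^\mp \equiv 0$ at $\lambda = 0$ across the origin, which relies essentially on the precise $|x|^\nu$ decay rates furnished by Proposition \ref{prop:smallep}. Once those ingredients are in place, the rest is a straightforward multilinear $\lambda$-expansion.
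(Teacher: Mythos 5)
Your proposal is correct and follows essentially the same route as the paper: your conserved quantity $v = au+Lq$ is precisely the row combination the paper performs in \eqref{derD-} (via \eqref{col1-2} and \eqref{col3}), your integral formulas $v_1^+=\lambda\int_y^{+\infty}u_1^+$ and $v_3^-=-\lambda\int_{-\infty}^{y}u_3^-$ reproduce the paper's integrated variational equations \eqref{eq-W1}--\eqref{eq-W3}, and both arguments hinge on the normalization $W_1^+(\cdot,0)=W_3^-(\cdot,0)=\bar W'$ together with $\int_{\R}\bar u'\,dx=[u]$. The only cosmetic difference is that you integrate $v'=-\lambda u$ directly instead of first differentiating the determinant in $\lambda$ by Leibniz' rule.
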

\medskip

\begin{proof} Let us consider \eqref{Evansfns2} for $D_-$.
By our choice, at $\lambda=0$, we can take
\begin{equation}\label{gchoice}
	W_1^+ (x,0)= W_3^- (x,0) = \bar W'(x)
\end{equation} 
where $\bar W$ is the shock profile. 
By Leibnitz' rule, we first compute
\begin{equation*}
	\begin{aligned} 
		\partial_\lambda D_-(y,0)&= \det\Big(\partial_\lambda
		W_1^+,W_2^+,W_3^-\Big)_{|_{\lambda=0}}\\&\qquad +\det\Big(
		W_1^+,\partial_\lambda W_2^+,W_3^-\Big)_{|_{\lambda=0}}
		+\det\Big(W_1^+,W_2^+,\partial_\lambda
		W_3^-\Big)_{|_{\lambda=0}}
	\end{aligned}
\end{equation*}
where, by using \eqref{gchoice}, the second term on the right hand side vanishes 
and the first and third terms can be grouped together, yielding
\begin{equation}\label{der-D}
		\partial_\lambda D_-(y,0)=\det\Big(W_1^+,W_2^+,\partial_\lambda
		W_3^--\partial_\lambda W_1^+\Big)_{|_{\lambda=0}}.
\end{equation}
Since $W_j^\pm(\cdot,\lambda)$ satisfies \eqref{eqW}, 
$\partial_\lambda W_1^+(x,0)= (\partial_\lambda
u_1^+,\partial_\lambda q_1^+,\partial_\lambda p_1^+)$
satisfies
\begin{equation*}
 	\Theta (\partial_\lambda W_1^+)' 
	= \A(x,0)\partial_\lambda W_1^+(x,0) 
 	+ \partial_\lambda\A(x,0)W_1^+(x,0), 
\end{equation*}
which directly gives
\begin{equation}\label{eq-W1}
 	(a\,\partial_\lambda u_1^+)' = - L(\partial_\lambda q_1^+)' - \bar u'.
\end{equation}
Likewise, $\partial_\lambda W_3^-(x,0) = (\partial_\lambda
u_3^-,\partial_\lambda q_3^-,\partial_\lambda p_3^-)$ satisfies
\begin{equation}\label{eq-W3}
	(a\,\partial_\lambda u_3^-)' = - L(\partial_\lambda q_3^-)' - \bar u'.
\end{equation}
Integrating equations \eqref{eq-W1} and \eqref{eq-W3} from $+\infty$
and $-\infty$, respectively, with use of boundary conditions
$\partial_\lambda W_1^+(+\infty,0) = \partial_\lambda W_3^-(-\infty,0) =0$,  
we obtain
\begin{equation*}
 		a\,\partial_\lambda u_1^+= - L\partial_\lambda q_1^+ - \bar u + u_+,
	\qquad\textrm{and}\qquad
		a\,\partial_\lambda u_3^-= - L\partial_\lambda q_3^- - \bar u + u_-.
\end{equation*} 
Thus
\begin{equation}\label{col3}
 	a(\partial_\lambda u_3^--\partial_\lambda u_1^+)
	= - L(\partial_\lambda q_3^--\partial_\lambda q_1^+) -[u].
\end{equation}
Meanwhile, since $W_j^+$, $j=1,2$ satisfy the equation \eqref{eqW}
and thus $(au)' = -Lq'$ with $W_j^+(+\infty,\lambda)=0$, we integrate the
latter equation, yielding
\begin{equation}\label{col1-2} 
	a u_j^+ = -L q_j^+, \qquad \mbox{for  }j=1,2.
\end{equation}
Using estimates \eqref{col1-2} and \eqref{col3}, we can now compute
the $\lambda$-derivative \eqref{der-D} of $D_\pm$ at $\lambda=0$ as
\begin{equation}\label{derD-}
	\begin{aligned}
\partial_\lambda D_-(y,0)&=\det\begin{pmatrix}u_1^+&u_2^+&\partial_\lambda
u_3^--\partial_\lambda u_1^+\\q_1^+&q_2^+&\partial_\lambda
q_3^--\partial_\lambda q_1^+\\p_1^+&p_2^+&\partial_\lambda
p_3^--\partial_\lambda p_1^+\end{pmatrix}\\
&=\det\begin{pmatrix}u_1^+&u_2^+&\partial_\lambda
u_3^--\partial_\lambda
u_1^+\\0&0&-[u]/L\\p_1^+&p_2^+&\partial_\lambda
p_3^--\partial_\lambda p_1^+\end{pmatrix}\\
&=L^{-1}[u]
\det\begin{pmatrix}u_1^+&u_2^+\\p_1^+&p_2^+\end{pmatrix}.
	\end{aligned}
\end{equation}
Applying again relation \eqref{col3}, we obtain \eqref{Evansfns2}. 

Similarly, for $D_+$ we obtain
\begin{equation}\label{derD+}
	\begin{aligned}
		\partial_\lambda D_+(y,0) &=-L^{-1}[u]
		\det\begin{pmatrix}u_1^+&u_2^-\\p_1^+&p_2^-
		\end{pmatrix}
	\end{aligned}
\end{equation}
from which the conclusion follows.
\end{proof}
\medskip

Since there are two different Evans functions for $y \gtrless 0$, we need to be
sure if one vanishes to order one (part of  the stability criterion), then the other 
does too.
Such property, content of the following Lemma, guarantees  that pole terms are 
the same on $y<0$ and $y>0$. 
\medskip

\begin{lemma} 
Defining the Evans functions 
\begin{equation}\label{Evansfns-def}
	D_\pm(\lambda): = D_\pm(\pm 1, \lambda),
\end{equation} 
we then have $D_+(\lambda) = m D_-(\lambda) +\cO(|\lambda|^2)$
where $m$ is some nonzero factor.
\end{lemma}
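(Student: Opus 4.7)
By the preceding Lemma~\ref{lem-Evansfns}, both $D_\pm(\lambda)$ vanish at $\lambda=0$ and admit the expansion $D_\pm(\lambda)=c_\pm\lambda+\cO(|\lambda|^2)$, with $c_\pm=\partial_\lambda D_\pm(\pm 1,0)$ given explicitly by \eqref{derD-}--\eqref{derD+}. The conclusion $D_+=mD_-+\cO(|\lambda|^2)$ with $m\neq 0$ follows once we verify that $c_\pm\neq 0$, for then we may simply take $m:=c_+/c_-$. The plan is to reduce both $c_\pm$ to a common scalar quantity $\Gamma_\pm$ governed by a first-order linear ODE, and then to rule out the identically-zero alternative by invoking nontriviality of the $W_2^\pm$ modes from Proposition~\ref{prop:smallep}.

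First, at $\lambda=0$ the first spectral equation reads $(au+Lq)'=0$, so $au+Lq$ is constant along every solution. For $W_2^\pm$ this constant is zero: $W_2^+\equiv 0$ on $(0,+\infty)$ forces the constant to vanish there, and the regularity of $W_2^+$ across $x=0$ from Proposition~\ref{prop:smallep} (with $u_2^+\sim|x|^{\nu}\to 0$ and $q_2^+\to 0$ as $x\to 0^-$) propagates the value $0$ to $(-\infty,0)$; symmetrically for $W_2^-$. The profile equations give $aU'+LQ'\equiv 0$. Using $u_1^+(x,0)=U'(x)$ and $p_1^+(x,0)=-Q(x)$ from $W_1^+(\cdot,0)=\bar W'$, and substituting $u_2^\mp=-Lq_2^\mp/a$ and $U'=-LQ'/a$ into the $2\times 2$ determinants of Lemma~\ref{lem-Evansfns}, the coefficients reduce to
\begin{equation*}
c_\pm=\pm\frac{[u]}{a(\pm 1)}\,\Gamma_\mp(\pm 1),\qquad \Gamma_\pm(y):=Q'(y)\,p_2^\pm(y,0)+Q(y)\,q_2^\pm(y,0).
\end{equation*}

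Second, a direct differentiation using $p'=-q$, $q'=bu-p$ and the profile identity $Q''=Q+bU'$ yields $\Gamma_\pm'=b(Q\,u_2^\pm+U'\,p_2^\pm)$; reinserting $au_2^\pm+Lq_2^\pm=0$ and $aU'+LQ'=0$ collapses the right-hand side to $-(Lb/a)\Gamma_\pm$. Hence on each connected component of $\R\setminus\{0\}$
\begin{equation*}
\Gamma_\pm'(y)=-\frac{Lb(y)}{a(y)}\,\Gamma_\pm(y),
\end{equation*}
with explicit solution $\Gamma_\pm(y)=\Gamma_\pm(y_0)\exp\bigl(-\int_{y_0}^y (Lb/a)\,dz\bigr)$. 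Consequently $\Gamma_\pm$ is either identically zero or nowhere zero on its support $(\mp\infty,0)$ or $(0,\pm\infty)$.

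The main obstacle, and the key step, is excluding the identically-zero alternative. Suppose $\Gamma_+\equiv 0$ on $(-\infty,0)$; then for each $y<0$ the pair $(q_2^+,p_2^+)(y)$ lies in the one-dimensional subspace $\{Q(y)q+Q'(y)p=0\}$, and since $Q>0$ on $\R$ (from $LQ=f(u_\pm)-f(U)>0$) we may write $(q_2^+,p_2^+)(y)=\alpha(y)(Q'(y),-Q(y))$ for a scalar function $\alpha$. The relation $p_2^{+\prime}=-q_2^+$ then forces $\alpha'Q=0$, whence $\alpha\equiv\beta$ is constant; feeding this into $q_2^{+\prime}=bu_2^+-p_2^+$ together with $Q''=Q+bU'$ gives $u_2^+=\beta U'$, so $W_2^+(\cdot,0)=\beta\,\bar W'$ on $(-\infty,0)$. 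But Proposition~\ref{prop:smallep} gives $W_2^+(0^-,0)=0$, while $\bar W'(0)\neq 0$ by monotonicity ($U'(0)<0$); hence $\beta=0$ and $W_2^+\equiv 0$ on $(-\infty,0)$, contradicting the nontrivial construction of $W_2^+$ via the fast mode $w_2^-$ of the reduced stretched system. A symmetric argument rules out $\Gamma_-\equiv 0$ on $(0,+\infty)$. Therefore $\Gamma_\mp(\pm 1)\neq 0$, both $c_\pm$ are nonzero, and $m=c_+/c_-$ is the desired nonzero factor.
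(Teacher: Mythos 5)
Your proof is correct, but it takes a genuinely different route from the paper's. The paper relates the two leading coefficients to each other directly: it uses $W_1^+(\cdot,0)=\bar W'$ to tie the first columns of the two determinants in \eqref{derD-}--\eqref{derD+} together, and matches the $(u_2,p_2)$-components of $W_2^+$ and $W_2^-$ at nearby points $-\epsilon_1$, $\epsilon_2$ via the common asymptotic $(|x|^{\nu},0)^\top+\cO(|x|^{\nu}a(x))$ of Proposition \ref{prop:smallep}, then propagates by the flow of \eqref{eqW} out to $x=\mp 1$; this yields $c_+=m\,c_-$ without deciding whether either coefficient vanishes. You instead compute $c_\pm$ in closed form as $\pm[u]\,\Gamma_\mp(\pm 1)/a(\pm 1)$ and prove the strictly stronger statement $c_\pm\neq 0$, by observing that $\Gamma_\pm$ obeys the Abel-type equation $\Gamma_\pm'=-(Lb/a)\Gamma_\pm$ on each component of $\R\setminus\{0\}$ (so it is nowhere zero unless identically zero), and that $\Gamma_\pm\equiv 0$ would force $W_2^\pm(\cdot,0)$ to be a constant multiple of $\bar W'$, hence zero by the vanishing at $x=0^\mp$, contradicting the nontriviality of the fast mode in Proposition \ref{prop:smallep}. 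I checked the algebra: the reduction of the $2\times 2$ minors using $au+Lq\equiv 0$ along $W_2^\pm$ and $aU'+LQ'\equiv 0$, and the derivative computation $\Gamma_\pm'=b(Qu_2^\pm+U'p_2^\pm)=-(Lb/a)\Gamma_\pm$, are both right. What your route buys is an explicit value of $m$, an argument that avoids the paper's rather loosely stated vector proportionalities $W_1^+(1)=m_1W_1^+(-1)$ and $(u_2^+,p_2^+)_{x=-1}=m_2(u_2^-,p_2^-)_{x=1}$, and the useful by-product that $\lambda=0$ is a simple zero of both $D_\pm$ --- a fact the paper uses implicitly when inverting $\det\begin{pmatrix}q_1^+&q_2^+\\ p_1^+&p_2^+\end{pmatrix}_{|_{\lambda=0}}$ in Lemma \ref{lem-estCnear0}. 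One cosmetic point: the paper only gives $LQ=f(u_\pm)-f(U)>0$, so $Q$ is nonvanishing with the constant sign of $L$ rather than necessarily positive; your argument uses only $Q\neq 0$, so nothing breaks.
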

\medskip

\begin{proof} 
Since $W_1^+(x)=\bar W'$ is a nonvanishing, bounded solution of the ODE \eqref{eqW}, 
we must have $W_1^+(1) = m_1 W_1^+(-1)$ for some $m_1$ nonzero. 
Meanwhile, Proposition \ref{prop:smallep} gives
\begin{equation*}
	\begin{pmatrix}u_2^\pm\\p_2^\pm\end{pmatrix} =
	\begin{pmatrix}|x|^{\nu}\\0\end{pmatrix} + \cO(|x|^\nu a(x)),
\end{equation*} 
as $x\to 0$, where $\nu = (a'(0)+Lb(0))/|a'(0)|$.
Thus, smoothness of $a$ near zero guarantees an existence of $\epsilon_1,\epsilon_2$ 
near zero such that
\begin{equation*}
	\begin{pmatrix}u_2^+\\p_2^+\end{pmatrix}_{x=-\epsilon_1} =
	\begin{pmatrix}u_2^-\\p_2^-\end{pmatrix}_{x=\epsilon_2}.
\end{equation*} 
This together with the fact that $W_2^\pm$ are solutions of the ODE \eqref{eqW} yields
\begin{equation*}
	\begin{pmatrix}	u_2^+	\\	p_2^+	\end{pmatrix}_{x=-1}	=m_2
	\begin{pmatrix}	u_2^-	\\	p_2^-	\end{pmatrix}_{x=1}
\end{equation*} 
for some $m_2$ nonzero. 
Putting these estimates into \eqref{derD-} and \eqref{derD+} and using analyticity 
of $D_\pm$ in $\lambda$ near zero, we easily obtain the conclusion.
\end{proof}

\section{Resolvent kernel bounds in low--frequency regions} 
\label{sec:reskbounds}

In this section, we shall derive pointwise bounds on the resolvent kernel 
$G_{\lambda}(x,y)$ in low-frequency regimes, that is, $|\lambda| \to 0$. 
For definiteness, throughout this section, we consider only the case $y<0$. 
The case $y>0$ is completely analogous by symmetry.

We solve \eqref{eq:resolker} with the jump conditions at $x=y$:
\begin{equation}\label{eq:conduy} 
	[\cG_\lambda(.,y)] 
	= \begin{pmatrix}a(y)^{-1} &0&0\\0& 1&0\\0&0&1\end{pmatrix}
\end{equation}
Meanwhile, we can write $\cG_\lambda(x,y)$ in  terms of decaying
solutions at $\pm \infty$ as follows
\begin{equation}\label{eq:formcolu} 
	\cG_\lambda(x,y) = \begin{cases}
	W_1^+(x,\lambda) C_1^+(y,\lambda) + W_2^+(x,\lambda) C_2^+(y,\lambda), & x>y,\\
	- W_3^-(x,\lambda) C_3^-(y,\lambda), & x<y\end{cases}
\end{equation}
where $C_j^\pm = (C_{jk}^\pm)_{k=1,2,3}$ are row vectors. 
We compute the coefficients $C^\pm_j$ by means of the transmission conditions
\eqref{eq:conduy} at $y$. 
Therefore, solving by Cramer's rule the system
\begin{equation*}
	\begin{pmatrix} W_1^+ & W_2^+ & W_3^- \end{pmatrix}
	\begin{pmatrix}C_1^+ \\ C_2^+ \\ C_3^- \end{pmatrix}\Bigr|_{(y,\lambda)} =
	\begin{pmatrix}a(y)^{-1}
	&0&0\\0& 1&0\\0&0&1\end{pmatrix},
\end{equation*}
we readily obtain,
\begin{equation*}
	\begin{pmatrix}C_1^+ \\ C_2^+ \\ C_3^- \end{pmatrix}
	= D_-(y,\lambda)^{-1}
	\begin{pmatrix} W_1^+ & W_2^+ & W_3^-\end{pmatrix}^{adj}\bigr|_{(y,\lambda)}
	\begin{pmatrix} a(y)^{-1} &0&0\\0& 1&0\\0&0&1\end{pmatrix}
\end{equation*} 
where $M^{adj}$ denotes the adjugate matrix of a matrix $M$. 
For example, 
\begin{align}
	C_{11}^+(y,\lambda) &= a(y)^{-1}D_-(y,\lambda)^{-1} \left| \begin{matrix} q_2^+ & q_3^- \\
	p_2^+ & p_3^- \end{matrix}\right|(y,\lambda), \label{C1+}\\
	C_{21}^+(y,\lambda) &= a(y)^{-1}  D_-(y,\lambda)^{-1}\left| \begin{matrix} q_3^- & q_1^+  \\
	p_3^- & p_1^+ \end{matrix}\right|(y,\lambda), \label{C2+}\\
	C_{31}^-(y,\lambda) &= a(y)^{-1}  D_-(y,\lambda)^{-1}\left| \begin{matrix} q_1^+ & q_2^+  \\
	p_1^+ & p_2^+ \end{matrix}\right|(y,\lambda).\label{C3-}
\end{align} 
Here, note that these are only coefficients that are
possibly singular as $y$ near zero because of singularity in the
first column of the jump-condition matrix \eqref{eq:conduy}.

We then easily obtain the following.
\medskip

\begin{lemma}\label{lem-estCnear0} 
For $y$ near zero, we have
\begin{equation}\label{est-C13}
	\begin{aligned}
		C_1^+(y,\lambda) &=~~~\frac {1}{\lambda}[u]^{-1}(1,\;-L,\;0)+ \cO(1),\\
		C_3^-(y,\lambda) &= -\frac {1}{\lambda}[u]^{-1} (1,\;-L,\;0)+\cO(1),
	\end{aligned}
\end{equation}
and 
\begin{equation}\label{est-C2}
	\begin{aligned} 
		C_2^+(y,\lambda)&=a(y)^{-1}|y|^{-\nu}\cO(1),
	\end{aligned}
\end{equation}
where $\nu$ is defined as in Proposition \ref{prop:smallep} and $\cO(1)$ is 
a uniformly bounded function, possibly depending on $y$ and $\lambda$.
\end{lemma}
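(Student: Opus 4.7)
The plan is to compute each coefficient directly from Cramer's rule applied to the $3\times 3$ linear system obtained from the jump conditions \eqref{eq:conduy} at $x=y$, namely
\begin{equation*}
	(W_1^+ \; W_2^+ \; W_3^-)(y,\lambda)\begin{pmatrix}C_1^+\\ C_2^+\\ C_3^-\end{pmatrix}(y,\lambda)
	=\begin{pmatrix}a(y)^{-1}&0&0\\0&1&0\\0&0&1\end{pmatrix},
\end{equation*}
and then to expand in $\lambda$ around zero.  Formulas \eqref{C1+}--\eqref{C3-} supply the first components of each row vector (those multiplied by $a(y)^{-1}$); the remaining two components are given by the analogous minors multiplied by $1$.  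Three inputs drive the calculation:  (i) the expansion $D_-(y,\lambda) = -a(y)^{-1}\lambda[u]\det\begin{pmatrix}q_1^+&q_2^+\\p_1^+&p_2^+\end{pmatrix}\bigr|_{\lambda=0} + \cO(|\lambda|^2)$ from Lemma \ref{lem-Evansfns}; (ii) the identity $W_1^+(x,0) = W_3^-(x,0) = \bar W'(x)$, so that columns $1$ and $3$ of $(W_1^+\;W_2^+\;W_3^-)$ coincide at $\lambda=0$; (iii) the first integral $a(x)u_j(x) = -L q_j(x)$ valid at $\lambda=0$ for every mode $(u_j,q_j,p_j)$ that decays at $\pm\infty$, obtained by integrating the first equation of \eqref{specsyst} at $\lambda=0$.

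For the first component of $C_1^+$, input (ii) reduces the numerator in \eqref{C1+} at $\lambda=0$ to $-\det\begin{pmatrix}q_1^+&q_2^+\\p_1^+&p_2^+\end{pmatrix}\bigr|_{\lambda=0}$, i.e., the very quantity controlling the leading behavior of $D_-$; multiplication by $a(y)^{-1}$ and division by $D_-$ then produce the residue $\lambda^{-1}[u]^{-1}$, with the subleading terms combining into an $\cO(1)$ remainder.  For the second component, Cramer's rule yields a $2\times 2$ minor involving the $u$ and $p$ rows; applying (iii) to substitute $u_j=-Lq_j/a$ at $\lambda=0$ factors out $-L/a$ times the same $q$-$p$ determinant, and the analogous cancellation with $D_-$ gives the residue $-L\,\lambda^{-1}[u]^{-1}$.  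For the third component, the relevant minor involves the $u$ and $q$ rows only; after the same substitution both rows become proportional and the numerator vanishes at $\lambda=0$, leaving the ratio uniformly $\cO(1)$.  The expansion for $C_3^-$ follows identically, with the opposite overall sign arising because the corresponding cofactor comes from the third rather than the first column of $(W_1^+\;W_2^+\;W_3^-)$.

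For $C_2^+$, the three relevant cofactors involve only the bounded quantities $q_{1,3}^\pm,\,p_{1,3}^\pm$ (the $W_2^+$ column is deleted); in particular, the first-component numerator $\det\begin{pmatrix}q_3^-&q_1^+\\p_3^-&p_1^+\end{pmatrix}$ in \eqref{C2+} is $\cO(\lambda)$ by (ii), and similar bounds hold for the other two components.  Meanwhile, the denominator $D_-(y,\lambda)$ becomes small as $y\to 0$: Proposition \ref{prop:smallep} gives $(q_2^+,p_2^+)(y,0) = \cO(|y|^\nu a(y))$, so the leading coefficient of $D_-$ from (i) is of order $|y|^\nu$, whence $D_-(y,\lambda)\sim -\lambda[u]\,|y|^\nu$.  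Combining $\cO(\lambda)$ numerator, $\lambda^{-1}|y|^{-\nu}$ denominator behavior, and the factor $a(y)^{-1}$ from the first column produces the claimed bound $C_2^+(y,\lambda) = a(y)^{-1}|y|^{-\nu}\cO(1)$.

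The main obstacle is ensuring that the $\cO(1)$ remainders in \eqref{est-C13}, and the $\cO(1)$ implicit in \eqref{est-C2}, remain uniformly bounded as $y\to 0$ and $\lambda\to 0$ simultaneously.  This hinges on the $\cO(|\lambda|^2)$ term in the $D_-$ expansion from Lemma \ref{lem-Evansfns} being $\cO(|\lambda|^2)$ \emph{uniformly} in $y$ near the singularity (not merely for each fixed $y$), together with analogous uniformity of the subleading terms of the cofactors; these uniformity statements are inherited from the analytic dependence of the decaying modes on $\lambda$ provided by Lemma \ref{lem-estmodes} and the fine near-singularity representations in Proposition \ref{prop:smallep}.
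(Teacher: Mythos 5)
Your proof is correct and follows essentially the same route as the paper's: Cramer's rule via the cofactor formulas \eqref{C1+}--\eqref{C3-}, the expansion of $D_-$ from Lemma \ref{lem-Evansfns}, the coincidence $W_1^+\equiv W_3^-$ at $\lambda=0$, and, for $C_2^+$, the $|y|^\nu$-vanishing of $(q_2^+,p_2^+)$ from Proposition \ref{prop:smallep}. You in fact go further than the paper, whose proof only verifies the singular first components $C_{j1}^\pm$; your explicit treatment of the second and third entries via the first integral $a\,u_j=-L\,q_j$ is a useful addition, though you should track the cofactor sign $(-1)^{2+1}$ in the second entry of the adjugate carefully (as written, factoring $-L/a$ from the $u$-row and then applying that sign gives $+L\,C_{11}^+$ rather than $-L\,C_{11}^+$; fortunately only the first components enter the subsequent Green-function analysis).
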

\medskip

\begin{proof} 
It suffices to estimate $C_{j1}^\pm$ when the singularity plays a role.
Recalling \eqref{Evansfns2} and \eqref{C3-}, we can estimate $C_{31}^-(y,\lambda)$ as
\begin{equation*}
	C_{31}^-(y,\lambda) = - \frac1{\lambda[u]}
	\det\begin{pmatrix}q_1^+&q_2^+\\p_1^+&p_2^+\end{pmatrix}^{-1}_{|_{\lambda=0}}
	\Big[\left| \begin{matrix} q_1^+ & q_2^+  \\
	p_1^+ & p_2^+ \end{matrix}\right|(y,0) + \cO(\lambda)\Big]
	= - \frac1{\lambda[u]} + \cO(1),
\end{equation*} 
where $\cO(1)$ is uniformly bounded since $a(y)D_-(y,\lambda)$ and normal modes 
$W^\pm_j$ are all bounded uniformly in $y$ near zero. 
This yields the bound for $C_{31}^-$ as claimed. 
The bound for $C_{11}^+$ follows similarly, noting that $W_3^- \equiv W_1^+$ at $\lambda=0$.

For the estimate on $C_2^+$, we first observe that by view of
\eqref{Evansfns2} and the estimate \eqref{decayu2} on $u_2^+$,
\begin{equation*} 
	D_-(y,\lambda) \ge c\,\lambda\, |y|^{\nu},
\end{equation*} 
for some $c>0$. 
This together with the fact that $W_3^- \equiv W_1^+$ at $\lambda=0$ yields 
the estimate for $C_2^+$ as claimed.
\end{proof}
\medskip

\begin{proposition}[Resolvent kernel bounds as $|y|\to 0$]\label{prop-nearzero} 
Assume \eqref{A0} - \eqref{A5k}. For $y$ near zero, there hold
\begin{equation}\label{G0-est1} 
	\cG_\lambda(x,y) ={\lambda^{-1}}[u]^{-1}\bar W'(1,\;-L,\;0) + \cO(e^{-\eta |x|})
\end{equation} 
for $y<0<x$, and
\begin{equation}\label{G0-est2}
	\cG_\lambda(x,y) = {\lambda^{-1}}[u]^{-1}\bar W' (1,\;-L,\;0)+
	\cO(1) \Big(1+\frac{|x|^{\nu}}{a(y)|y|^\nu}\Big)
\end{equation} 
for $y<x<0$, and
\begin{equation*}
	\cG_\lambda(x,y) ={\lambda^{-1}}[u]^{-1}\bar W' (1,\;-L,\;0)+ \cO(e^{-\eta |x|})
\end{equation*} 
for $x<y<0$, for some $\eta>0$.
Similar bounds can be obtained for the case $y>0$.
\end{proposition}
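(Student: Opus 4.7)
The plan is to substitute the coefficient estimates of Lemma \ref{lem-estCnear0} into the representation \eqref{eq:formcolu} for $\cG_\lambda(x,y)$ and treat the three spatial regions separately. Three structural facts drive the computation. First, at $\lambda = 0$ one has $W_1^+(x,0) = W_3^-(x,0) = \bar W'(x)$ by the normalization in Lemma \ref{lem-Evansfns}, so $|W_1^+(x,\lambda)|$ and $|W_3^-(x,\lambda)|$ are bounded by $C e^{-\eta|x|}$ uniformly for $\lambda$ near zero, combining Lemma \ref{lem-expdecay} with the analyticity of $W_j^\pm$ in $\lambda$. Second, $W_2^+(x,\lambda) \equiv 0$ for $x > 0$ by construction \eqref{eq:defW2+}, while on $-\epsilon_0 < x < 0$ it coincides with $w_2^-$, whose components satisfy $|w_2^-(x,\lambda)| \lesssim |x|^\nu$ by Proposition \ref{prop:smallep}. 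Third, analyticity in $\lambda$ lets me split $W_j^\pm(x,\lambda) = W_j^\pm(x,0) + \lambda R_j^\pm(x,\lambda)$ with $R_j^\pm$ of the same pointwise size as $W_j^\pm(x,0)$ on the relevant $x$-range.

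For the region $y < 0 < x$, the second term in \eqref{eq:formcolu} vanishes identically, so $\cG_\lambda(x,y) = W_1^+(x,\lambda) C_1^+(y,\lambda)$. Inserting $C_1^+(y,\lambda) = \lambda^{-1}[u]^{-1}(1,-L,0) + \cO(1)$ from Lemma \ref{lem-estCnear0} and expanding $W_1^+(x,\lambda) = \bar W'(x) + \lambda R_1^+(x,\lambda)$, the leading $\lambda^{-1}[u]^{-1}\bar W'(x)(1,-L,0)$ pole emerges and all the cross terms are of size $\cO(e^{-\eta x})$, giving \eqref{G0-est1}. The region $x < y < 0$ is essentially symmetric: only $-W_3^-(x,\lambda) C_3^-(y,\lambda)$ contributes, and inserting $C_3^-(y,\lambda) = -\lambda^{-1}[u]^{-1}(1,-L,0) + \cO(1)$ together with the exponential decay of $W_3^-(x,\lambda)$ as $x \to -\infty$ yields the same form of remainder.

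The delicate case is $y < x < 0$, where both terms of \eqref{eq:formcolu} are active. The $W_1^+$ contribution still yields the pole together with an $\cO(1)$ remainder, except that $\bar W'(x)$ is now merely bounded (not exponentially small) because $x$ lies near zero. The genuinely new ingredient is the second term, $W_2^+(x,\lambda) C_2^+(y,\lambda) = w_2^-(x,\lambda) \cdot a(y)^{-1}|y|^{-\nu}\cO(1)$; bounding $|w_2^-|$ by $C|x|^\nu$ via Proposition \ref{prop:smallep} produces exactly the $|x|^\nu/(a(y)|y|^\nu)$ factor appearing in \eqref{G0-est2}. The main obstacle lies precisely in this middle region: although neither $C_2^+$ nor $w_2^-$ is individually well-behaved near the singularity, their product is tamed by the matched power $(|x|/|y|)^\nu$, which is exactly the compensation supplied by the reduced-flow analysis underlying Lemma \ref{lem-pttracking}. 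Finally, the $y > 0$ case follows by the mirror-image construction with decaying modes $W_3^-, W_2^-$ at $-\infty$ and $W_1^+$ at $+\infty$; the equality of pole residues across $y = 0$ (so that the right-hand sides match in \eqref{G0-est1}--\eqref{G0-est2}) is precisely what Lemma \ref{lem-Evansfns} guarantees.
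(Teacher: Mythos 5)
Your proposal is correct and follows essentially the same route as the paper: substitute the coefficient bounds of Lemma \ref{lem-estCnear0} into the representation \eqref{eq:formcolu}, use $W_1^+(x,\lambda)=\bar W'(x)+\cO(\lambda)e^{-\eta|x|}$ (and likewise for $W_3^-$) to extract the $\lambda^{-1}$ pole in the outer cases, and in the middle region $y<x<0$ pair the singular coefficient $C_2^+=a(y)^{-1}|y|^{-\nu}\cO(1)$ with the $|x|^\nu$ decay of $w_2^-$ from Proposition \ref{prop:smallep}. No gaps.
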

\medskip

\begin{proof} 
For the case $y<0<x$, using \eqref{est-C13} and recalling that 
$W_1^+(x) = \bar W'  + \cO(\lambda)e^{-\eta |x|}$ and 
$W_2^+(x) \equiv 0$, we have
\begin{equation*}
	\cG_\lambda(x,y) = W_1^+(x)\, C_1^+(y) 
		= \Big(\bar W' +\cO(\lambda)e^{-\eta |x|}\Big)
		\Big(\frac{1}{\lambda[u]}(1,\;-L,\;0) + \cO(1)\Big),
\end{equation*}
yielding \eqref{G0-est1}. In the second case $y<x<0$, from the formula
\eqref{eq:formcolu} projected on the first component, we have
\begin{equation*}
	C_1^+(y,\lambda)u_1^+(x,\lambda) + C_2^+(y,\lambda)u_2^+(x,\lambda)
\end{equation*}	
where the first term contributes ${\lambda^{-1}}[u]^{-1}\bar W' + \cO(1)$ as in the 
first case, and the second term is estimated by \eqref{est-C2} and \eqref{decayu2}.

Finally, we estimate the last case $x<y<0$ in a same way as done in the first case, 
noting that $y$ is still near zero and $W_3^-(x) = \bar W' + \cO(\lambda)e^{-\eta |x|}$. 
\end{proof}

Next, we derive pointwise bounds of $G_\lambda(x,y)$ in regions $|y|\to +\infty$. 
Note however that the representations \eqref{eq:formcolu} and above estimates fail 
to be useful in the $y \to -\infty$ limit, since we actually need precise decay rates in
order to get an estimate of form
\begin{equation*}
	|G_\lambda(x,y)| \leq C e^{-\eta|x-y|},
\end{equation*}
which are unavailable from $W_j^+$ in the $y \to -\infty$ regime.
Thus, we need to express the $(+)$-bases in terms of the growing
modes $\psi_j^-$ at $-\infty$, and the sole decaying mode $\phi_3^-$
where $\psi_j^-,\phi_3^-$ are defined as in Lemma \ref{lem-estmodes}. 
Expressing such solutions in the basis for $y <
0$, away from zero, there exist {\it analytic} coefficients
$\alpha_{jk}:=\alpha_{jk}(\lambda)$ such that
\begin{equation}\label{eq:goodmodes} 
	\begin{aligned} 
		W_1^+(x,\lambda)&=
			\alpha_{11}(\lambda) \psi_1^-(x,\lambda) + \alpha_{12}(\lambda)
			\psi_2^- (x,\lambda)+ \alpha_{13}(\lambda) \phi_3^-(x,\lambda), \\
		W_2^+ (x,\lambda)&= \alpha_{21} (\lambda)\psi_1^-(x,\lambda) +
			\alpha_{22} (\lambda)\psi_2^- (x,\lambda)
			+ \alpha_{23}(\lambda)\phi_3^-(x,\lambda).
	\end{aligned}
\end{equation}
At $\lambda=0$, we choose $W_1^+(\cdot,0)\equiv \phi_3^-(\cdot,0)\equiv \bar W'$.
Thus, 
\begin{equation}\label{alpha-cond1}
	\alpha_{11}(0) =\alpha_{12}(0) =0.
\end{equation} 
Furthermore, still as $\lambda=0$, $\psi_2^-$ is a (nearly constant) bounded 
solution and has the form $(b_-^{-1},0,1)^\top$ as $x$ near zero. 
Observe also that $W_2^+$ is the solution converging to zero in form of 
$|x|^\nu(1,a(x),a(x))^\top$ as $x\to 0^-$. 
Thus, we can choose
\begin{equation}\label{alpha-cond2}
	\alpha_{22}(0) =0.
\end{equation}
To express the coefficients $C^+_j$ in terms of the uniformly decaying/growing 
modes at $-\infty$, with a slight abuse of notation we write
\begin{equation*}
	\psi_j^- = (u_j^-, q_j^-, p_j^-)^\top, \quad j=1,2,
\end{equation*}
and define the $2 \times 2$ minors
\begin{equation*}
 	\Omega_{ij}^\pm(y,\lambda) 
	:= \begin{vmatrix} q_i^\pm & q_j^\pm \\ p_i^\pm & p_j^\pm \end{vmatrix} 
	= - \Omega_{ji}^\pm (y,\lambda),
\end{equation*}
and the analytic minors
\begin{equation*}
 \hat d_{12}(\lambda) := 
 	\begin{vmatrix} \alpha_{11} & \alpha_{12} \\ \alpha_{21} & \alpha_{22} \end{vmatrix}, \qquad
 \hat d_{23}(\lambda) := 
 	\begin{vmatrix} \alpha_{12} & \alpha_{13} \\ \alpha_{22} & \alpha_{23} \end{vmatrix}, \qquad
 \hat d_{13}(\lambda) := 
 	\begin{vmatrix} \alpha_{11} & \alpha_{13} \\ \alpha_{21} & \alpha_{23} \end{vmatrix}
\end{equation*} 
where by \eqref{alpha-cond1} and \eqref{alpha-cond2}, we note
\begin{equation}\label{dhat-cond}
	 \hat d_{12}(0) = \hat d_{23}(0) =0.
\end{equation}
By elementary column operations we notice that
\begin{equation}\label{minor23}
 	\begin{vmatrix} q_2^+ & q_3^- \\ p_2^+ & p_3^- \end{vmatrix} 
	= \alpha_{21} \Omega_{13}^- + \alpha_{22} \Omega_{23}^-,
\end{equation}
\begin{equation}\label{minor13}
	\begin{vmatrix} q_3^- & q_1^+ \\ p_3^- & p_1^+ \end{vmatrix} 
	= - \alpha_{11} \Omega_{13}^- - \alpha_{12} \Omega_{23}^-,
\end{equation}
\begin{equation}\label{minor12}
 	\begin{vmatrix} q_1^+ & q_2^+\\ p_1^+ & p_2^+ \end{vmatrix} 
	= \Omega_{12}^+ = \hat d_{12} \Omega_{12}^- + \hat d_{13} \Omega_{13}^- 
	+ \hat d_{23} \Omega_{23}^-.
\end{equation}

\begin{lemma}\label{lem:orlambda} 
The minor in \eqref{minor13} can be improved by
\begin{equation}\label{useful3}
	\begin{vmatrix} q_3^- & q_1^+ \\ p_3^- & p_1^+ \end{vmatrix} 
	= \lambda (\hat\alpha_{11} \Omega_{31}^- + \hat\alpha_{12} \Omega_{32}^-)
\end{equation} 
for some coefficients $\hat \alpha_{ij}$.
\end{lemma}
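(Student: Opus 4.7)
The plan is to extract a factor of $\lambda$ out of the expression in \eqref{minor13} by exploiting the vanishing conditions on $\alpha_{11}$ and $\alpha_{12}$ at $\lambda = 0$ recorded in \eqref{alpha-cond1}. The proof should be essentially a one-line manipulation, so the only thing to check carefully is that we have the right analyticity in place and that the sign convention matches.

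Concretely, I would start from the identity \eqref{minor13} derived by elementary column operations,
\begin{equation*}
\begin{vmatrix} q_3^- & q_1^+ \\ p_3^- & p_1^+ \end{vmatrix}
= -\alpha_{11}(\lambda)\,\Omega_{13}^-(y,\lambda) - \alpha_{12}(\lambda)\,\Omega_{23}^-(y,\lambda).
\end{equation*}
The coefficients $\alpha_{11}$ and $\alpha_{12}$ come from the representation \eqref{eq:goodmodes} of $W_1^+$ in the $(-)$-basis, and by the conjugation lemma (Lemma \ref{lem-estmodes}) together with analyticity of $W_1^+$ and of the basis $\{\psi_1^-, \psi_2^-, \phi_3^-\}$, they are analytic in $\lambda$ in a neighborhood of $\lambda = 0$.

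By \eqref{alpha-cond1}, $\alpha_{11}(0) = \alpha_{12}(0) = 0$, so by analyticity we may factor a single power of $\lambda$: there exist analytic functions $\hat\alpha_{11}(\lambda)$ and $\hat\alpha_{12}(\lambda)$ with
\begin{equation*}
\alpha_{11}(\lambda) = \lambda\,\hat\alpha_{11}(\lambda),\qquad
\alpha_{12}(\lambda) = \lambda\,\hat\alpha_{12}(\lambda).
\end{equation*}
Substituting into the displayed identity and using the antisymmetry $\Omega_{ij}^- = -\Omega_{ji}^-$ of the $2\times 2$ minors yields
\begin{equation*}
\begin{vmatrix} q_3^- & q_1^+ \\ p_3^- & p_1^+ \end{vmatrix}
= \lambda\bigl(\hat\alpha_{11}(\lambda)\,\Omega_{31}^-(y,\lambda) + \hat\alpha_{12}(\lambda)\,\Omega_{32}^-(y,\lambda)\bigr),
\end{equation*}
which is exactly \eqref{useful3}.

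There is no real obstacle here: the whole content of the lemma is the observation that \eqref{alpha-cond1} is not an accident of the particular choice $W_1^+(\cdot,0) \equiv \bar W'$, but rather provides an extra factor of $\lambda$ that will cancel the factor of $\lambda$ appearing in $D_-(y,\lambda)$ (cf.\ \eqref{Evansfns2}) when we form the coefficient $C_{21}^+$ in \eqref{C2+}. The only thing I would double-check is the analyticity of $\alpha_{11},\alpha_{12}$ at $\lambda = 0$, which follows from the joint analyticity in $(x,\lambda)$ of the matching solutions produced by Lemma \ref{lem-estmodes} together with the nondegeneracy of the basis $\{\psi_1^-,\psi_2^-,\phi_3^-\}$ at any fixed reference point $x < 0$ bounded away from zero.
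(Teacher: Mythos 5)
Your proof is correct and follows essentially the same route as the paper: the paper's one-line justification ("at $\lambda=0$ we can take $W_1^+(x,0)=W_3^-(x,0)=\bar W'(x)$") is exactly the source of the vanishing conditions \eqref{alpha-cond1} that you invoke, and the rest is the factorization $\alpha_{1j}(\lambda)=\lambda\,\hat\alpha_{1j}(\lambda)$ by analyticity together with the antisymmetry $\Omega_{ij}^-=-\Omega_{ji}^-$. You have merely spelled out the details the paper leaves implicit.
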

\medskip

\begin{proof} 
The estimate is clear, due to the fact that at $\lambda=0$, we can take 
$W_1^+(x,0) = W_3^-(x,0)= \bar W'(x)$.
\end{proof}
\medskip

We also have the following crucial cancelation for $x>y$
\begin{equation}\label{useful1}
 \begin{aligned}
   W^+_1 C_{11}^+ &+  W_2^+ C_{21}^+ \\
  	&= a^{-1}  D^{-1}_- \Bigl( (\alpha_{21}\Omega_{13}^- + \alpha_{22} \Omega_{23}^-)
  		(\alpha_{11} \psi_1^- + \alpha_{12} \psi^-_2 + \alpha_{13}\phi^-_3)  \\
	&\qquad\qquad\quad
		  - (\alpha_{11}\Omega_{13}^- + \alpha_{12} \Omega_{23}^-)(\alpha_{21} \psi_1^- 
		+\alpha_{22} \psi^-_2 + \alpha_{23}\phi^-_3) \Bigr)\\
	&= a^{-1} D^{-1}_- \left( \hat d_{12} \Omega_{23}^- \psi_1^- - \hat
		d_{12} \Omega_{13}^- \psi_2^- - (\hat d_{13} \Omega_{13}^- 
		+ \hat d_{23} \Omega_{23}^- ) \phi^-_3 \right),
 \end{aligned}
\end{equation}
where $\Omega_{ij}^-$ are functions in $y$ and $\phi_1^-,\phi_2^-, \psi_3^-$ are in $x$, 
noting that  $\Omega_{13}^-\psi_1^-$ and $\Omega_{23}^-\psi_2^-$ are canceled out.

We recall here that $\mu_j^-$, $j=1,2,3$, are three eigenvalues satisfying 
\begin{equation*}
 \mu_1^-\le-c_0<0, \quad
 \mu_2^- = -\lambda/a_- + \cO(\lambda^2),\quad
 \mu_3^-\ge c_0>0,
\end{equation*}
for some $c_0>0$.
\medskip

\begin{lemma}\label{Ccoeffs-est}
For $y<0$ away from zero, we have
\begin{align}
	C_1^+(y,\lambda) &=\lambda^{-1}[u]^{-1}e^{-\mu_2^-\,y}(1,\;-L,\;0) +
	\cO(e^{-\mu_1^-y}+e^{-\mu_2^-y}),\label{estbigy-C1}\\
	C_2^+(y,\lambda)&=\cO(e^{-\mu_1^-y}+e^{-\mu_2^-y})\label{estbigy-C2}\\
	C_3^-(y,\lambda) &=- \lambda^{-1}[u]^{-1}e^{-\mu_2^-\,y} (1,\;-L,\;0)
	+\cO(e^{-\mu_3^-y}).\label{estbigy-C3}
\end{align}
\end{lemma}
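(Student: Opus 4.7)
My plan is to rewrite every numerator appearing in the Cramer's-rule formulas for $C_1^+,C_2^+,C_3^-$ as an analytic combination of the Wronskian minors $\Omega_{ij}^-(y,\lambda)$ of the asymptotic basis $\{\psi_1^-,\psi_2^-,\phi_3^-\}$, and then read off the exponential rates directly from Lemma \ref{lem-estmodes}. Starting from \eqref{C1+}--\eqref{C3-} and the analogous Cramer formulas for the second- and third-column entries $C_{jk}^\pm$ ($k=2,3$), I substitute the decomposition \eqref{eq:goodmodes} of $W_1^+,W_2^+$ in the $(-\infty)$-basis, which at the level of $2\times 2$ minors is exactly the content of \eqref{minor23}, \eqref{useful3} (from Lemma \ref{lem:orlambda}), and \eqref{minor12}. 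After this step, every coefficient to be estimated takes the form (linear combination of $\Omega_{ij}^-$)$/D_-$, possibly multiplied by $a(y)^{-1}$.

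Next, I extract exponential behaviour. By Lemma \ref{lem-estmodes} and the explicit eigenvectors \eqref{evectors}, each minor admits the expansion
\[
\Omega_{ij}^-(y,\lambda)=m_{ij}(\lambda)\,e^{(\mu_i^-(\lambda)+\mu_j^-(\lambda))y}\bigl(1+O(e^{-\eta|y|})\bigr),
\]
with $m_{ij}(\lambda)$ analytic and nonvanishing at $\lambda=0$. Combining Lemma \ref{lem-Evansfns} with the normalization $W_1^+(\cdot,0)=\phi_3^-(\cdot,0)=\bar W'$ and the vanishings $\alpha_{11}(0)=\alpha_{12}(0)=\alpha_{22}(0)=0$, a direct computation of the minor in \eqref{Evansfns2} yields
\[
D_-(y,\lambda)= c\,\lambda\,[u]\,a(y)^{-1}\alpha_{21}(0)\,\Omega_{13}^-(y,0)+O(\lambda^2),
\]
for an explicit nonzero constant $c$, so that $D_-^{-1}$ contributes the pole $\lambda^{-1}$, the factor $a(y)/\alpha_{21}(0)$, and the exponential $e^{-(\mu_1^-(0)+\mu_3^-(0))y}$.

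Assembly then follows by direct cancellation. For $C_{31}^-$, the numerator $\Omega_{12}^+=\hat d_{12}\Omega_{12}^-+\hat d_{13}\Omega_{13}^-+\hat d_{23}\Omega_{23}^-$ simplifies by \eqref{dhat-cond}: $\hat d_{12}(0)=\hat d_{23}(0)=0$ while $\hat d_{13}(0)=-\alpha_{21}(0)\alpha_{13}(0)=-\alpha_{21}(0)$, so that the leading term $\hat d_{13}(0)\Omega_{13}^-(y,0)$ cancels exactly against the corresponding factor in $D_-$, producing the prefactor $-\lambda^{-1}[u]^{-1}$. The residual exponential reduces to $e^{-\mu_2^-y}$ after using $\mu_1^-+\mu_2^-+\mu_3^-=-a_-^{-1}(\lambda+Lb_-)$ to rearrange rates, and the two subleading pieces $\lambda\,\hat d_{12}'(0)\Omega_{12}^-$ and $\lambda\,\hat d_{23}'(0)\Omega_{23}^-$ contribute precisely the error $O(e^{-\mu_3^-y})$ of \eqref{estbigy-C3}. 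The bound \eqref{estbigy-C1} for $C_1^+$ is obtained by the same mechanism from \eqref{minor23}, where $\alpha_{22}(0)=0$ provides the necessary factor of $\lambda$ to cancel the pole and the two subleading exponentials collect into $O(e^{-\mu_1^-y}+e^{-\mu_2^-y})$. For $C_2^+$, the factor of $\lambda$ already extracted in \eqref{useful3} by Lemma \ref{lem:orlambda} cancels $\lambda^{-1}$ in $D_-^{-1}$, leaving the purely exponential bound \eqref{estbigy-C2}. Finally, the row-vector direction $(1,-L,0)$ in \eqref{estbigy-C1} and \eqref{estbigy-C3} is identified by repeating the analysis for the $k=2,3$ entries of $C_j^\pm$ and using the identity $au_j^+=-Lq_j^+$, valid at $\lambda=0$ by \eqref{col1-2}: this forces $C_{j2}=-L\,C_{j1}$ at leading order, while the corresponding third-component minors $\det(u_i,u_j;q_i,q_j)$ vanish identically at $\lambda=0$, so that $C_{j3}$ is one order smaller and is absorbed into the error term.

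The main obstacle is to verify, uniformly in $y\le -c<0$ and $|\lambda|$ small, that the subleading pieces truly land in the stated error classes. This requires factoring $e^{-\mu_2^-(\lambda)y}$ out of the leading term explicitly (rather than the $\lambda=0$ rate), so that the $O(\lambda y)$ discrepancies between the $\lambda=0$ rates and the full $\mu_j^-(\lambda)$ rates are absorbed harmlessly; and checking that the $O(e^{-\eta|y|})$ conjugation errors from Lemma \ref{lem-estmodes} remain strictly smaller than the exponential gaps $\theta_1^-,\theta_3^-$ separating the fast modes from the slow one. Once this rate bookkeeping is in place, the estimates are a straightforward consequence of the cancellations outlined above.
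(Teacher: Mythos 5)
Your proposal is correct and follows essentially the same route as the paper's proof: expanding $W_1^+,W_2^+$ in the $(-\infty)$-basis via \eqref{eq:goodmodes}, invoking the minor identities \eqref{minor23}, \eqref{useful3}, \eqref{minor12} together with the vanishings $\alpha_{11}(0)=\alpha_{12}(0)=\alpha_{22}(0)=0$ and \eqref{dhat-cond}, and reading off rates from $|\Omega_{ij}^-|=\cO(e^{(\mu_i^-+\mu_j^-)y})$ against $|D_-|=\cO(\lambda\,e^{(\mu_1^-+\mu_2^-+\mu_3^-)y})$. The extra details you supply (the explicit leading term of $D_-$, the identification of the direction $(1,\,-L,\,0)$ via \eqref{col1-2}, and the uniformity-in-$y$ bookkeeping) are points the paper handles by reference to Lemma \ref{lem-estCnear0} or leaves implicit, so they strengthen rather than alter the argument.
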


\begin{proof} 
First, by using \eqref{eq:goodmodes} and the estimates in previous sections 
on normal modes $\psi_j^-,\phi_3^-$, we readily obtain
\begin{equation*}
	|D_-(y,\lambda)| = |\det (W_1^+,W_2^+,W_3^-)| 
	=\cO(\lambda)e^{\mu_1^-y}e^{\mu_2^-y}e^{\mu_3^-y}
\end{equation*}
and
\begin{equation*}
	|\Omega_{12}^-|=\cO(e^{\mu_1^-y}e^{\mu_2^-y}),\quad
	|\Omega_{13}^-|=\cO(e^{\mu_1^-y}e^{\mu_3^-y}),\quad
	|\Omega_{23}^-|=\cO(e^{\mu_2^-y}e^{\mu_3^-y}).
\end{equation*}
Using \eqref{minor23} and noting that $\alpha_{22}(0)=0$, we estimate
\begin{equation*}
	C_{11}^+(y,\lambda)= -a^{-1}D_-^{-1}
	\Big(\alpha_{21} \Omega_{13}^- + \alpha_{22}\Omega_{23}^-\Big)
	 = \lambda^{-1}\cO(e^{-\mu_2^-y}) + \cO(e^{-\mu_2^-y}+e^{-\mu_1^-y}),
\end{equation*}
which gives \eqref{estbigy-C1} for $\lambda$ small, by observing that the coefficient 
in Laurent  expansions at order $\lambda^{-1}$ is $[u]^{-1}(1,\;-L,\;0)$ 
(see the proof of Lemma \ref{lem-estCnear0}).

Next, by \eqref{useful3}, we estimate
\begin{equation*}
	C_{21}^+(y,\lambda) = a^{-1}D_-^{-1}\,\lambda (\hat\alpha_{11} \Omega_{31}^- 
	+ \hat\alpha_{12}\Omega_{32}^-) = \cO(e^{-\mu_1^-\,y}+e^{-\mu_2^-\,y}).
\end{equation*}
Finally, we can estimate

\begin{equation*}
	\begin{aligned}
		C_{31}^-(y,\lambda) &= a^{-1}D^{-1}_-\Big(\hat d_{12}
			\Omega_{12}^- + \hat d_{13} \Omega_{13}^- + \hat d_{23}
 			\Omega_{23}^-\Big)\\
		&= \lambda^{-1}\cO(e^{-\mu_2^-y}) +
			\cO(e^{-\mu_1^-y}+e^{-\mu_2^-y}+e^{-\mu_3^-y}),
	\end{aligned}
\end{equation*}
in the same way as done for $C_{11}^+$, yielding the estimate as claimed; note that 
the constraint \eqref{dhat-cond} on $\hat d(0)$ shows that only slow-growing mode 
$\psi_2^-$ appears in the $\lambda^{-1}$ term. 
The appearance of the fast-growing term is due to $D^{-1}_-\Omega_{12}^-$, a new 
feature as compared to the estimate of $C_{11}^+$.
\end{proof}
\medskip

\begin{proposition}[Resolvent kernel bounds as $|y|\to +\infty$]\label{prop-awayzero} 
Under \eqref{A0} - \eqref{A5k}, for $|y|$ large, there hold
\begin{equation}\label{G1-est1} 
	\begin{aligned}
	G_\lambda(x,y)=&\lambda^{-1}[u]^{-1}e^{-\mu_2^-\,y}\bar W'(1,\;-L,\;0)\\
	&+\cO((e^{-\mu_2^-\,y}+e^{-\mu_1^-\,y})e^{\mu_3^+\,x})
	\end{aligned}
\end{equation} 
for $y<0<x$, and
\begin{equation}\label{G1-est2} 
	\begin{aligned}
	G_\lambda(x,y) =& \lambda^{-1}[u]^{-1}e^{-\mu_2^-\,y}\bar W'
	(1,\;-L,\;0)\\&+\cO(e^{\mu_1^-(x-y)}) +\cO(e^{\mu_2^-(x-y)}) 
	+\cO(e^{-\mu_2^-y}e^{\mu_3^-x})
	\end{aligned}
\end{equation} 
for $y<x<0$, and
\begin{equation}\label{G1-est3}
	\begin{aligned}G_\lambda(x,y) =&-\lambda^{-1}[u]^{-1}
		e^{-\mu_2^-\,y}\bar W' (1,\;-L,\;0)\\
		&+\cO(e^{-\mu_2^-\,y}e^{\mu_3^-\,x}) +\cO(e^{\mu_3^-\,(x-y)})
	\end{aligned}
\end{equation} 
for $x<y<0$.
Similar bounds can be obtained for the case $y>0$.
\end{proposition}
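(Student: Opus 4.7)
The plan is to proceed case by case from the representation \eqref{eq:formcolu}, inserting the coefficient estimates from Lemma \ref{Ccoeffs-est} together with the asymptotic expansions of the decaying modes developed in Lemmas \ref{lem:asymmodes} and \ref{lem-estmodes}. Throughout I use that at $\lambda=0$ we may take $W_1^+(\cdot,0)=W_3^-(\cdot,0)=\phi_3^\pm(\cdot,0)=\bar W'$, so analyticity in $\lambda$ yields expansions of the form $W_j(x)=\bar W'(x)+\cO(\lambda)\cdot(\text{decaying mode})$. By symmetry one only needs $y<0$; the case $y>0$ follows by interchanging the roles of the $(+)$ and $(-)$ modes.

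The two outer regions are direct. When $y<0<x$, the defining property $W_2^+\equiv 0$ on $x>0$ collapses \eqref{eq:formcolu} to $\cG_\lambda(x,y)=W_1^+(x)C_1^+(y)$. Writing $W_1^+(x)=\bar W'(x)+\cO(\lambda\,e^{\mu_3^+x})$ for $x>0$ and multiplying by \eqref{estbigy-C1} separates the singular piece $\lambda^{-1}[u]^{-1}e^{-\mu_2^-y}\bar W'(x)(1,-L,0)$ from a remainder of size $\cO((e^{-\mu_1^-y}+e^{-\mu_2^-y})e^{\mu_3^+x})$, which is \eqref{G1-est1}. The case $x<y<0$ is essentially symmetric: here $\cG_\lambda=-W_3^-(x)C_3^-(y)$ with $W_3^-(x)=\phi_3^-(x)=\bar W'(x)+\cO(\lambda\,e^{\mu_3^-x})$, and combining with \eqref{estbigy-C3} yields \eqref{G1-est3} after noting that $W_3^-(x)\cdot\cO(e^{-\mu_3^-y})=\cO(e^{\mu_3^-(x-y)})$.

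The genuinely delicate region is $y<x<0$, where both $W_1^+$ and $W_2^+$ contribute and each of their expansions \eqref{eq:goodmodes} in the basis $\{\psi_1^-,\psi_2^-,\phi_3^-\}$ contains the fast-growing mode $\psi_1^-\sim e^{\mu_1^-x}$. A naive term-by-term bound of $W_1^+C_{11}^+$ and $W_2^+C_{21}^+$ would spawn an unwanted $\cO(\lambda^{-1}e^{\mu_1^-x-\mu_2^-y})$ contribution. The resolution is to invoke the cancellation identity \eqref{useful1}, in which the coefficients of $\psi_1^-$ and $\psi_2^-$ are exactly the $2\times2$ minor $\hat d_{12}$ vanishing at $\lambda=0$ by \eqref{dhat-cond}; this supplies the missing factor of $\lambda$ that neutralizes the $\lambda^{-1}$ coming from $D_-^{-1}$. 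Combining with the growth estimates $|\Omega_{ij}^-|=\cO(e^{(\mu_i^-+\mu_j^-)y})$ and $|D_-|=\cO(\lambda)\,e^{(\mu_1^-+\mu_2^-+\mu_3^-)y}$ derived in the proof of Lemma \ref{Ccoeffs-est}, the $\psi_1^-$ and $\psi_2^-$ contributions collapse to $\cO(e^{\mu_1^-(x-y)})+\cO(e^{\mu_2^-(x-y)})$, while the $\phi_3^-$ term produces $\lambda^{-1}\hat d_{13}(0)\,a(y)^{-1}D_-(y,0)^{-1}\Omega_{13}^-(y)\,\phi_3^-(x)$; substituting $\phi_3^-(x)=\bar W'(x)+\cO(\lambda\,e^{\mu_3^-x})$ and simplifying using Lemma \ref{lem-Evansfns} yields the asserted leading contribution together with the remainder $\cO(e^{-\mu_2^-y}e^{\mu_3^-x})$, giving \eqref{G1-est2}.

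The sole real obstacle is thus the bookkeeping in the middle region: one must resist bounding $W_1^+C_1^+$ and $W_2^+C_2^+$ in isolation and instead exploit the structural cancellation \eqref{useful1}, which traces back to $\alpha_{11}(0)=\alpha_{12}(0)=\alpha_{22}(0)=0$ (i.e., to the coincidence $W_1^+(\cdot,0)\equiv\phi_3^-(\cdot,0)=\bar W'$). Once this cancellation is made visible, all remaining estimates are routine substitutions of the normal-form expansions, and the proof of the $y>0$ analogue requires no new ideas.
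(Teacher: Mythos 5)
Your proposal is correct and follows essentially the same route as the paper: the two outer cases $y<0<x$ and $x<y<0$ are handled by multiplying the single contributing mode ($W_1^+$ or $W_3^-$, each equal to $\bar W'$ plus $\cO(\lambda)$ times a fast mode) against the coefficient bounds of Lemma \ref{Ccoeffs-est}, and the middle case $y<x<0$ rests on the cancellation \eqref{useful1} together with the vanishing minors \eqref{dhat-cond} to remove the fast-growing modes from the $\lambda^{-1}$ term. The only caveat is notational (the $\lambda^{-1}$ in your $\phi_3^-$ contribution comes from $D_-^{-1}$, not as a separate prefactor), which does not affect the argument.
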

\medskip

\begin{proof} 
For the first case $y<0<x$, since $W_2^+\equiv 0$ on $(0,+\infty)$, we have
\begin{equation*}
	\begin{aligned}
		\cG_\lambda(x,y) &= W_1^+(x)\, C_1^+(y,\lambda) 
			=\Big( \bar W'(x) +\cO(\lambda e^{\mu_3^+\,x})\Big) 
			C_1^+(y,\lambda) \\
		&=\Big( \bar W'(x) + \cO(\lambda e^{\mu_3^+\,x})\Big)\\
		&\times \Big(\lambda^{-1}[u]^{-1}e^{-\mu_2^-\,y}(1,\;-L,\;0)
			+ \cO(e^{-\mu_1^-\,y}+e^{-\mu_2^-y})\Big),
 	\end{aligned}
\end{equation*} 
giving the estimate \eqref{G1-est1}.

For the third case $x<y<0$, we have
\begin{equation*}
	\begin{aligned}
		\cG_\lambda(x,y) &= -W_3^-(x)\,C_3^-(y,\lambda)  
		=-\Big( \bar W'(x) + \cO(\lambda e^{\mu_3^-\,x})\Big) C_3^-(y,\lambda)\\
		 &=\Big( \bar W'(x) + \cO(\lambda e^{\mu_3^-\,x})\Big)\\
		 &\times \Big(-\lambda^{-1}[u]^{-1}e^{-\mu_2^-\,y}(1,\;-L,\;0) 
		   +\cO(e^{-\mu_1^-y}+e^{-\mu_2^-y}+e^{-\mu_3^-y})\Big),
	\end{aligned}
\end{equation*} 
proving the estimate \eqref{G1-est3}.

Finally, for the second case $y<x<0$, we have
\begin{equation*}
 	\cG_\lambda(x,y) =  W_1^+(x,\lambda)\,C_1^+(y,\lambda)
	+ W_2^+(x,\lambda)\,C_2^+(y,\lambda).
\end{equation*}
In this case, besides the fact that the $\lambda^{-1}$ term comes from 
the expression $W_1^+\,C_1^+$ as above, there is a crucial cancelation 
as computed in \eqref{useful1}, 
which proves \eqref{G1-est2}, using the crucial constraint \eqref{dhat-cond} 
(by our choice  of the bases) to eliminate fast modes in the $\lambda^{-1}$ term.
\end{proof}

\section{Pointwise bounds  and low-frequency estimates} \label{sec:ptwise}

In this section, using the previous pointwise bounds (Propositions \ref{prop-nearzero} 
and \ref{prop-awayzero}) for the resolvent kernel in low-frequency regions, we derive 
pointwise bounds for the ``low-frequency'' Green function:
\begin{equation}\label{LFGreenfn} 
	G^I(x,t;y): = \frac{1}{2\pi i}\int_{\Gamma\bigcap\{|\lambda|\le r\}}
	e^{\lambda t}G_\lambda(x,y)d\lambda
\end{equation}
where $\Gamma$ is any contour near zero, but away from the essential spectrum,
and $r>0$ is a sufficiently small constant such that all previous computations on 
$G_\lambda$ hold.
\medskip

\begin{proposition} \label{prop-greenbounds} 
Assuming \eqref{A0} - \eqref{A5k} and defining the effective diffusion 
$L\,b_\pm$ (see \cite{MaZ3}),  the low-frequency Green distribution $G^I(x,t;y)$ 
associated with the linearized evolution equations  may be decomposed as
\begin{equation*}
	G^I(x,t;y)= E+  \widetilde G^I + R, 
\end{equation*}
where, for $y<0$:
\begin{equation*}
	E(x,t;y):= \bar U_x(x)[u]^{-1} e(y,t),
\end{equation*}
\begin{equation*}
  e(y,t):=\left(\textrm{\rm errfn}\left(\frac{y+a_{-}t}{\sqrt{4\,L\,b_-\,t}}\right)
  -\textrm{\rm errfn}\left(\frac{y-a_{-}t}{\sqrt{4\,L\,b_-\,t}}\right)\right);
\end{equation*}
\begin{equation*}
	|\partial_{x}^\kappa \partial_y^\beta \widetilde G^I(x,t;y)|
	\le C_1\,t^{-(|\beta|+|\kappa|)/2-1/2}e^{-(x-y-a_{-} t)^2/C_2\,t},
\end{equation*}
\begin{equation*}
	R(x,t;y)= \cO(e^{-\eta(|x-y|+t)}) + \cO(e^{-\eta t})\chi(x,y)
	\Big[1+\frac{1}{a(y)}(x/y)^\nu\Big],
\label{Rbounds}
\end{equation*}
for some $\eta$, $C_1$, $C_2>0$, 
where $\beta,\kappa=0, 1$ and $\nu = \frac{Lb(0)+a'(0)}{|a'(0)|}$ and
\begin{equation*}
 	\chi(x,y) =\left\{\begin{aligned} 
 		&1 &\qquad  &-1<y<x<0 \\ 
		&0 &\qquad  & \textrm{\rm otherwise.}
	\end{aligned}\right.
\end{equation*}
Symmetric bounds hold for $y\ge 0$.
\end{proposition}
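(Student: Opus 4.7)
The plan is to insert the pointwise resolvent bounds from Propositions \ref{prop-nearzero} and \ref{prop-awayzero} into the spectral resolution formula \eqref{LFGreenfn} and evaluate the resulting $\lambda$-integral by a combination of residue calculus at the pole at $\lambda=0$ and stationary-phase/saddle-point estimates on the remaining smooth part. The structure of the three terms $E$, $\widetilde G^I$, $R$ in the decomposition mirrors the three qualitatively different contributions to $G_\lambda(x,y)$: the pole $\lambda^{-1}[u]^{-1}\bar W'(x)(1,-L,0)e^{-\mu_2^-(\lambda)y}$ common to all regimes, the nonpolar slow-mode contributions of order $e^{\mu_2^-(x-y)}$ and $e^{-\mu_2^-y}e^{\mu_3^\pm x}$, and the exponentially small or $y\to 0$ singular remainders.

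First, I would write the low-frequency contour $\Gamma\cap\{|\lambda|\le r\}$ as the union of two rays leaving the origin at angles $\pm\theta$ in the left half-plane (closed at $|\lambda|=r$), so that $\Real(\lambda t)$ is controlled and the Taylor expansions
\begin{equation*}
\mu_2^\pm(\lambda)=-\lambda/a_\pm+\beta_\pm\lambda^2+\cO(\lambda^3),\qquad
\beta_\pm=-Lb_\pm/a_\pm^3,
\end{equation*}
(read off from the characteristic polynomial $\pi_\pm$) are valid. Substituting the pole term from Propositions \ref{prop-nearzero} and \ref{prop-awayzero} gives, up to subleading corrections,
\begin{equation*}
\bar U_x(x)[u]^{-1}\cdot\frac{1}{2\pi i}\int_\Gamma \lambda^{-1}e^{\lambda t-\mu_2^-(\lambda)y}\,d\lambda.
\end{equation*}
Completing the square in the exponent $\lambda t-\mu_2^-(\lambda)y\approx\lambda(t+y/a_-)+Lb_-y\lambda^2/a_-^3$ and deforming the contour to a standard Gaussian path through the saddle identifies this integral, via the classical formula $\frac{1}{2\pi i}\int \lambda^{-1}e^{\lambda\tau+c\lambda^2}d\lambda=\frac12\operatorname{errfn}(\tau/\sqrt{4c})$, with the averaged errfn term $e(y,t)$ in the statement; the two errfn's arise because the contour must be opened symmetrically around the branches of the slow mode and picks up the jump at the origin. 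This produces the excited part $E$.

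Next, for $\widetilde G^I$ I would collect all nonpolar terms in \eqref{G1-est1}--\eqref{G1-est3} that carry the slow factor $e^{\mu_2^-(\lambda)(x-y)}$ or $e^{\mu_2^-(\lambda)(-y)}e^{\mu_3^\pm(\lambda)x}$ and estimate the resulting integrals by shifting the contour to the minimizing saddle, using again the quadratic expansion of $\mu_2^\pm$. A standard stationary-phase argument (as in \cite{ZH,MaZ3}) yields the Gaussian bound $Ct^{-1/2}e^{-(x-y-a_-t)^2/Ct}$ for the undifferentiated kernel, with the extra $t^{-1/2}$ per derivative coming from the extra factor of $\mu_j$ in the integrand when $\partial_x$ or $\partial_y$ is applied before inversion. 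Fast-mode contributions integrated against the exponentially decaying factors $e^{\mu_3^\pm x}$ or $e^{\mu_1^-(x-y)}$ produce purely exponentially small bounds of size $e^{-\eta(|x-y|+t)}$, which are absorbed into $R$. The near-zero-$y$ bound \eqref{G0-est2} contributes the singular piece $(1+|x|^\nu/(a(y)|y|^\nu))$ on the sliver $-1<y<x<0$, which times $e^{-\eta t}$ (from the bounded contour and the analytic dependence on $\lambda$ in $|\lambda|\le r$) gives exactly the $\chi$-supported term in $R$.

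The main obstacle I expect is the slow-mode saddle-point analysis in the presence of the $y\to 0$ singularities of $G_\lambda$: the $\cO(|y|^{-\nu}/a(y))$ prefactor in $C_2^+(y,\lambda)$ (Lemma \ref{lem-estCnear0}) is borderline integrable, so one must verify that this singularity is matched by cancellations from the $\lambda^{-1}$ pole (so that the residual $\chi$-term is the only trace left) and that it does not destroy the Gaussian decay of $\widetilde G^I$. This cancellation is exactly what the careful choice of bases \eqref{alpha-cond1}--\eqref{alpha-cond2} and the crucial identity \eqref{useful1} were designed to guarantee: the fast-growing modes are absent from the $\lambda^{-1}$ coefficient, so only $\mu_2^-$ appears in the Gaussian profile and the pole residue combines cleanly with the slow-mode integral to produce $E$. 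A secondary technical point is to justify the contour deformation across the branches $\mu_j^\pm(\lambda)$ and to verify analyticity of all coefficients $\alpha_{jk},\hat d_{ij}$ on $|\lambda|\le r$, which is immediate from the conjugation lemma (Lemma \ref{lem-estmodes}).
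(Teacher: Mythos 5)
Your proposal follows essentially the same route as the paper: the paper's own proof simply inserts the resolvent bounds of Propositions \ref{prop-nearzero} and \ref{prop-awayzero} into \eqref{LFGreenfn}, cites \cite{ZH,MaZ1,MaZ3} for the standard pole-plus-saddle-point computations producing $E$ and $\widetilde G^I$, and handles the new singular piece by moving the contour into $\{\Real\lambda<0\}$ to obtain the $\cO(e^{-\eta t})\chi(x,y)\bigl[1+a(y)^{-1}(x/y)^\nu\bigr]$ term of $R$ --- exactly the three steps you describe. One correction: expanding $\pi_\pm$ gives $\mu_2^\pm(\lambda)=-\lambda/a_\pm+(Lb_\pm/a_\pm^3)\lambda^2+\cO(\lambda^3)$, i.e.\ $\beta_\pm=+Lb_\pm/a_\pm^3$ rather than $-Lb_\pm/a_\pm^3$; with your sign the quadratic coefficient in the exponent $\lambda t-\mu_2^-(\lambda)y$ would be $Lb_-y/a_-^3<0$ for $y<0$ and the Gaussian/errfn evaluation would fail, whereas the correct sign yields the positive coefficient $Lb_-|y|/a_-^3$ and hence the stated effective diffusion $L\,b_-$.
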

\medskip

\begin{proof} 
Having resolvent kernel estimates in Propositions \ref{prop-nearzero} and
\ref{prop-awayzero}, we can now follow previous analyses of \cite{ZH,MaZ1,MaZ3}. 
Indeed, the claimed bound for $E$ precisely comes from the term 
$\lambda^{-1}[u]^{-1}e^{-\mu_2^-\,y}\bar U_x$, where 
$\mu_2^-= -\lambda/a_- + \cO(\lambda^2)$.
Likewise, estimates of $\widetilde G^I$ are due to bounds in
Proposition \ref{prop-awayzero} for $y$ away from zero and those in
Proposition \ref{prop-nearzero} for $y$ near zero but $x$ away from zero. 
The singularity occurs only in the case $-1<y<x<0$, as
reported in Proposition \ref{prop-nearzero}. 
In this case, using the estimate \eqref{G0-est2} and moving the contour $\Gamma$ in
\eqref{LFGreenfn} into the stable half-plane $\{\Real\lambda<0\}$, we have
\begin{equation*}
 \int_\Gamma e^{\lambda t}\Big(1+\frac{|x|^{\nu}}{a(y)|y|^\nu}\Big)d\lambda
= \cO(e^{-\eta t})\Big(1+\frac{|x|^{\nu}}{a(y)|y|^\nu}\Big),
\end{equation*}
which precisely contributes to the second term in $R(x,t;y)$. 
The first term is as usual the fast decaying term.
\end{proof}
\medskip

With the above pointwise estimates on the (low-frequency) Green
function, we have the following from \cite{MaZ1,MaZ3}.
\medskip

\begin{lemma}[\cite{MaZ1,MaZ3}]\label{lem-estGI} 
Assuming \eqref{A0} - \eqref{A5k}, $\widetilde G^I$ satisfies
\begin{equation}\label{estGI}
	\Big|\int_{-\infty}^{+\infty} \partial_y^\beta\widetilde G^I(\cdot,t;y) f(y)dy \Big|_{L^p} 
	\le C (1+t)^{-\frac 12 (1/q-1/p)-|\beta|/2}|f|_{L^q},
\end{equation}
for all $t\ge 0$, some $C>0$, for any $1\le q\le p$.
\end{lemma}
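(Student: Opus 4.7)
The approach is to deduce \eqref{estGI} as a direct application of Young's convolution inequality to the Gaussian-type pointwise bound supplied by Proposition \ref{prop-greenbounds}. Writing $K_t(z) := C_1\, t^{-(|\beta|+1)/2} e^{-z^2/(C_2 t)}$, that bound yields
$$|\partial_y^\beta \widetilde G^I(x,t;y)| \le K_t(x-y-a_- t),$$
so the quantity in \eqref{estGI} is pointwise dominated by a convolution $(K_t * \tilde f)(x)$ with $\tilde f(y) := |f(y+a_- t)|$, which satisfies $\|\tilde f\|_{L^q} = \|f\|_{L^q}$. Young's inequality then gives $\|K_t * \tilde f\|_{L^p} \le \|K_t\|_{L^r}\|f\|_{L^q}$ for exponents $1/r = 1 + 1/p - 1/q$.

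An elementary Gaussian computation, using the change of variables $z = \sqrt{t}\,w$, yields $\|K_t\|_{L^r} = C\, t^{-(|\beta|+1)/2 + 1/(2r)}$; since $1 - 1/r = 1/q - 1/p$ we obtain
$$\|K_t\|_{L^r} = C\, t^{-|\beta|/2 - (1/q - 1/p)/2},$$
which matches the target with $(1+t)$ replaced by $t$. For $t \ge 1$ this is already the required bound. For $t \in (0,1]$ the factor $t^{-1/2}$ in Proposition \ref{prop-greenbounds} is singular and a separate uniform argument is needed.

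To handle the small-time regime, I would revisit the contour representation \eqref{LFGreenfn}: since $\Gamma \cap \{|\lambda|\le r\}$ is bounded, $|e^{\lambda t}|$ is uniformly bounded for $t \in [0,1]$. After extracting the excited-mode term $E$ (coming from the pole at $\mu_2^- = -\lambda/a_- + \cO(\lambda^2)$) and the residual $R$, the remaining resolvent contributions are analytic in $\lambda$ near zero and satisfy uniform spatial bounds $C e^{-\eta|x-y|}$ by Propositions \ref{prop-nearzero} and \ref{prop-awayzero}. Hence $|\partial_y^\beta \widetilde G^I(x,t;y)| \le C e^{-\eta|x-y|}$ uniformly in $t \in (0,1]$, and Young's inequality against this fixed exponential kernel (which lies in every $L^r$, $1 \le r \le \infty$) gives $\|\cdot\|_{L^p} \le C\|f\|_{L^q}$, consistent with $(1+t)^{-\alpha}$ for bounded $t$. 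Combining the two time regimes and using $t^{-\alpha} \le C(1+t)^{-\alpha}$ for $t \ge 1$ and $(1+t)^{-\alpha} \le 1$ for $t \le 1$, we recover \eqref{estGI}.

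The main obstacle is the small-time regime, where one must argue that the apparent $t^{-1/2}$ singularity in the large-time Gaussian bound is spurious: it reflects the long-time diffusive spreading of the Gaussian, not a genuine blow-up of the low-frequency Green distribution. Establishing this requires using the analyticity of the resolvent kernel on the bounded contour $\Gamma \cap \{|\lambda|\le r\}$ together with the exponential spatial decay furnished in Section \ref{sec:reskbounds}, so that the contour integral is absolutely convergent and yields a uniformly bounded, exponentially-localized kernel for all $t \in [0,1]$. Once both regimes are verified, the estimate \eqref{estGI} follows by the single Young's inequality argument outlined above.
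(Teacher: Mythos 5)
The paper offers no proof of this lemma: it is quoted from \cite{MaZ1,MaZ3}, where it is obtained exactly along the lines you propose, by applying Young's convolution inequality to the pointwise Gaussian bound of Proposition \ref{prop-greenbounds}. Your large-time computation is correct: with $1/r=1+1/p-1/q$ one indeed gets $\|K_t\|_{L^r}=C\,t^{-|\beta|/2-(1/q-1/p)/2}$ (one small bookkeeping point: for $y>0$ the drift is $a_+t$ rather than $a_-t$, so the integral must be split at $y=0$ and two convolutions estimated separately, which is harmless). You are also right that the Gaussian bound alone cannot give the $(1+t)$-form of the estimate for $t\le 1$ whenever $\beta=1$ or $q<p$, and that a uniform, spatially localized bound on $\widetilde G^I$ for bounded $t$ is what closes the argument.

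The one step that does not go through as written is the claim that, after removing $E$ and $R$, the remaining resolvent contributions ``satisfy uniform spatial bounds $Ce^{-\eta|x-y|}$ by Propositions \ref{prop-nearzero} and \ref{prop-awayzero}.'' Those propositions contain the slowly varying terms $\cO(e^{\mu_2^\pm(\lambda)(\cdot)})$ with $\Real\mu_2^\pm=\mp\Real\lambda/a_\pm+\cO(|\lambda|^2)$, whose real part has indefinite sign and magnitude only $\cO(|\lambda|)$ on a fixed contour; on $\Gamma\cap\{|\lambda|\le r\}$ these are only $\cO(e^{\epsilon|x-y|})$ for small $\epsilon>0$, not exponentially decaying. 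The standard repair (and what \cite{ZH,MaZ3} actually do) is either to deform the contour in a $(x,y,t)$-dependent way --- for $|x-y|/t$ large this produces the $e^{-\eta(|x-y|+t)}$ bound directly --- or, more cheaply here, to note that for $t\le1$ the Gaussian bound itself already gives $t^{-1/2}e^{-(x-y-a_\pm t)^2/C_2t}\le Ce^{-\eta|x-y|}$ once $|x-y|\ge 1+|a_\pm|$, so that only the compact region $|x-y|\le C$ remains, where boundedness of the contour and of the (non-pole part of the) integrand yield a uniform constant. One must also supply, for $\beta=1$, bounds on $\partial_y G_\lambda$ analogous to those of Section \ref{sec:reskbounds}, which the paper records only implicitly through the $\partial_y^\beta$ statement of Proposition \ref{prop-greenbounds}. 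With these repairs your argument coincides with the standard one.
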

\medskip

We recall the following fact from \cite{Z4}.
\medskip

\begin{lemma}[\cite{Z4}]\label{lem-kernel-e} 
The kernel $e$ satisfies
\begin{equation*} 
	\begin{aligned}
	|e_y(\cdot, t)|_{L^p}, |e_t(\cdot, t)|_{L^p}, &\le C t^{-\frac 12(1-1/p)}, \\
	|e_{yt}(\cdot, t)|_{L^p} &\le C t^{-\frac 12 (1-1/p)-1/2}.
	\end{aligned}
\end{equation*}
for all $t> 0$, some $C>0$, for any $p\ge 1$.
\end{lemma}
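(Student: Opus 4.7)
My plan is to derive the three bounds from an integral representation of $e$ as a heat-kernel convolution, which makes the uniformity in $t>0$ transparent. Using the identity $\textrm{errfn}(u_1)-\textrm{errfn}(u_2)=(2/\sqrt{\pi})\int_{u_2}^{u_1}e^{-s^2}\,ds$ and making the substitutions $\xi=s\sqrt{4Lb_-t}$ followed by $\zeta=y-\xi$, one rewrites $e$ as
\begin{equation*}
  e(y,t) = 2\int_{-a_-t}^{a_-t} K(y-\zeta,t)\,d\zeta, \qquad K(y,t):=\frac{1}{\sqrt{4\pi Lb_-t}}\,e^{-y^2/(4Lb_-t)},
\end{equation*}
i.e., $e(\cdot,t)=2\,K(\cdot,t)\ast\chi_{[-a_-t,a_-t]}$. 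Differentiating under the integral in $y$ produces the boundary-term identity $e_y = 2\bigl(K(\cdot+a_-t,t)-K(\cdot-a_-t,t)\bigr)$, while differentiating in $t$ with the help of the heat equation $K_t=Lb_-K_{yy}$ and Leibniz' rule for the moving endpoints yields
\begin{equation*}
  e_t = 2a_-\bigl(K(\cdot+a_-t,t)+K(\cdot-a_-t,t)\bigr) + Lb_-\,e_{yy},
\end{equation*}
\begin{equation*}
  e_{yt} = 2a_-\bigl(K_y(\cdot+a_-t,t)+K_y(\cdot-a_-t,t)\bigr) + Lb_-\,e_{yyy}.
\end{equation*}
Differentiation under the integral also gives the convolution representations $e_{yy}=2K_{yy}\ast\chi_{[-a_-t,a_-t]}$ and $e_{yyy}=2K_{yyy}\ast\chi_{[-a_-t,a_-t]}$.

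For the $L^p$ bounds I would apply two complementary estimates. First, the standard Gaussian scaling via the substitution $z=y/\sqrt{t}$ gives $|\partial_y^k K(\cdot,t)|_{L^p}\le C_k\,t^{-k/2-(1-1/p)/2}$. Second, Young's convolution inequality yields
\begin{equation*}
  |\partial_y^k K\ast \chi_{[-a_-t,a_-t]}|_{L^p} \le |\partial_y^k K(\cdot,t)|_{L^p}\cdot|\chi_{[-a_-t,a_-t]}|_{L^1} \le 2a_- C_k\,t^{1-k/2-(1-1/p)/2}.
\end{equation*}
The first estimate applied to $K(\cdot\pm a_-t,t)$ and $K_y(\cdot\pm a_-t,t)$ directly produces the bounds $Ct^{-(1-1/p)/2}$ and $Ct^{-(1-1/p)/2-1/2}$ for the ``transport'' parts of $e_y$, $e_t$ and $e_{yt}$. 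For the ``diffusion'' parts $Lb_-e_{yy}$ and $Lb_-e_{yyy}$, taking the minimum of the two estimates gives
\begin{equation*}
  |e_{yy}|_{L^p}\le C\min\bigl\{t^{-1+1/(2p)},\,t^{-1/2+1/(2p)}\bigr\}\le C\,t^{-(1-1/p)/2},
\end{equation*}
\begin{equation*}
  |e_{yyy}|_{L^p}\le C\min\bigl\{t^{-3/2+1/(2p)},\,t^{-1+1/(2p)}\bigr\}\le C\,t^{-(1-1/p)/2-1/2},
\end{equation*}
uniformly for $t>0$, using the direct bound when $t\ge 1$ and Young's bound (which gains the extra factor $2a_-t$ from the indicator's $L^1$ norm) when $t\le 1$. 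Feeding these into the identities for $e_y,e_t,e_{yt}$ yields the three claimed rates.

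The main technical point, and the reason both estimates are needed, is that neither alone is uniform in $t>0$: the direct Gaussian bound is too pessimistic as $t\to 0$ for the higher derivatives $e_{yy},e_{yyy}$, since it fails to capture the substantial overlap and cancellation between the two shifted Gaussians appearing in the differences; Young's inequality, conversely, degrades as $t\to\infty$ because $|\chi_{[-a_-t,a_-t]}|_{L^1}=2a_-t$ grows linearly, while the direct single-Gaussian bounds are then already sharper. Splitting at $t=1$ reconciles the two regimes, and the convolution representation is precisely the device that encodes the small-$t$ cancellation automatically.
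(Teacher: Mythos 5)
Your proof is correct. Note that the paper itself does not prove this lemma --- it is quoted from \cite{Z4} --- so the comparison is with the standard argument in that reference, which differentiates the errfn expression directly and estimates the resulting Gaussian terms one by one. The delicate point there is $e_t$: the two terms of the form $t^{-1}u_ie^{-u_i^2}$ (with $u_i=(y\pm a_-t)/\sqrt{4Lb_-t}$) each have $L^p$ norm only of order $t^{-1+1/(2p)}$, which is too large as $t\to 0$, so one must exploit the cancellation between them (e.g., by the mean value theorem over the interval $|u_1-u_2|=O(\sqrt t)$). Your device --- the convolution representation $e=2K\ast\chi_{[-a_-t,a_-t]}$, the identity $e_t=2a_-\bigl(K(\cdot+a_-t,t)+K(\cdot-a_-t,t)\bigr)+Lb_-e_{yy}$ (which I checked against the direct computation), and Young's inequality with $|\chi_{[-a_-t,a_-t]}|_{L^1}=2a_-t$ --- packages exactly that cancellation, and is a clean alternative. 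One remark: your $\min$/split-at-$t=1$ step is superfluous, since the Young bound alone already gives $|e_{yy}|_{L^p}\le C|K_{yy}|_{L^p}\cdot 2a_-t\le Ct^{-\frac12(1-1/p)}$ and $|e_{yyy}|_{L^p}\le Ct^{-\frac12(1-1/p)-\frac12}$ uniformly for all $t>0$ (the factor $t$ from the indicator exactly offsets one extra derivative); correspondingly, your closing claim that ``neither estimate alone is uniform in $t>0$'' is not quite accurate, though the argument as written remains valid since the minimum of two correct upper bounds is a correct upper bound.
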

\medskip

Finally, we have the following estimate on $R$ term.
\medskip

\begin{lemma}\label{lem-estR}
Under \eqref{A0} - \eqref{A5k}, $R(x,t;y)$ satisfies
\begin{equation*} 
	\Big|\int_{-\infty}^{+\infty}R(\cdot,t;y) f(y)dy \Big|_{L^p} \le C
	e^{-\eta t}(|f|_{L^p} + |f|_{L^\infty}),
\end{equation*}
for all $t\ge 0$, some $C,\eta>0$, for any $1\le p\le \infty$.
\end{lemma}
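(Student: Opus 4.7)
The plan is to split $R$ according to its two defining terms,
\begin{equation*}
	R(x,t;y) = R_1(x,t;y) + R_2(x,t;y),
\end{equation*}
with $R_1 = \mathcal{O}(e^{-\eta(|x-y|+t)})$ the standard ``fast-decaying'' part, and
$R_2 = \mathcal{O}(e^{-\eta t})\chi(x,y)\bigl[1 + a(y)^{-1}(x/y)^\nu\bigr]$ the singular part
supported near the origin, and then estimate each piece on its own.

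For $R_1$ the bound is a textbook application of Young's convolution inequality: since
$e^{-\eta|\cdot|}\in L^{1}(\R)$ with norm $2/\eta$, for every $1\le p\le\infty$ and every $t\ge0$
we get $\bigl|\int R_1(\cdot,t;y)f(y)\,dy\bigr|_{L^p}\le C e^{-\eta t}|f|_{L^p}$, with the constant
$C$ independent of $p$.

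The main work is in $R_2$. The indicator $\chi(x,y)$ localizes the integration to
$\{-1<y<x<0\}$, a set of finite measure in both variables. The constant $1$ inside the bracket
contributes at most $|f|_{L^{\infty}}$ times the length of the interval $(-1,x)\subset(-1,0)$,
which is harmless. For the singular piece $a(y)^{-1}(x/y)^{\nu}$ I will use three facts in tandem:
(i) $a(y)\sim a'(0)y$ as $y\to 0$ with $a'(0)<0$, so $|a(y)|^{-1}\lesssim |y|^{-1}$ near zero;
(ii) on the support of $\chi$, $|y|>|x|$ hence $0<x/y<1$, so $(x/y)^{\nu}\le 1$ whenever $\nu>0$;
and (iii) $\nu>0$ is guaranteed by \eqref{eq:profdiff} (equivalently by (A$5_1$)). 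The key
computation is a scaling substitution $z=y/x$ (with $x<0$ fixed), which transforms
\begin{equation*}
	\int_{-1}^{x}\frac{(x/y)^{\nu}}{|a(y)|}\,|f(y)|\,dy
	\;\le\; C\int_{1}^{1/|x|} z^{-\nu-1}\,|f(xz)|\,dz
	\;\le\; C|f|_{L^{\infty}}\int_{1}^{\infty} z^{-\nu-1}\,dz
	\;=\;\frac{C}{\nu}|f|_{L^{\infty}},
\end{equation*}
uniformly in $x\in(-1,0)$. Because the resulting bound is an $L^{\infty}_{x}$-bound supported
on the compact set $(-1,0)$, it immediately upgrades to an $L^{p}_{x}$-bound for every
$1\le p\le\infty$, with constant independent of $p$. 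Combining the two pieces yields
$|R\ast f(\cdot,t)|_{L^p}\le C e^{-\eta t}(|f|_{L^p}+|f|_{L^{\infty}})$ as claimed.

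The only delicate point is the singular factor $a(y)^{-1}(x/y)^\nu$, which at first glance
looks as if it could produce a non-integrable contribution as $y\to 0$. What saves the day
is the geometric constraint $|y|>|x|$ built into $\chi(x,y)$ together with the positivity of
$\nu$: after the change of variables $z=y/x$, the singular factor reorganizes into the
integrable tail $z^{-\nu-1}$, and the $x$-dependence disappears from the bound. This is
precisely the role played by condition \eqref{eq:profdiff} in the analysis.
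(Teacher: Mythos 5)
Your proof is correct and follows essentially the same route as the paper's: split $R$ into the exponentially localized part (Young's inequality) and the singular part supported on $\{-1<y<x<0\}$, bound the latter by $|f|_{L^\infty}$ times an integral that is uniformly bounded in $x$ thanks to $a(y)\sim a'(0)y$ and $\nu>0$, and then pass to $L^p$ using the compact support in $x$. The only difference is that you make explicit, via the substitution $z=y/x$, the uniform bound $\int_{-1}^x |a(y)|^{-1}(x/y)^\nu\,dy\le C/\nu$ that the paper asserts without computation.
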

\medskip

\begin{proof} 
The estimate clearly holds for the fast decaying term $e^{-\eta(|x-y|+t)}$ in $R$. 
Whereas, to estimate the second term, first notice that it is only nonzero precisely 
when $-1<y<x<0$ or $0<x<y<1$. Thus, for instance, when $-1<x<0$, we estimate
\begin{equation*}
	\begin{aligned}
		\Big|\int_{-\infty}^{+\infty}\chi(x,y)&\Big[1+\frac{1}{a(y)}(x/y)^\nu\Big]f(y)dy
		\Big|=\Big|\int_{-1}^{x}\Big[1+\frac{1}{a(y)}(x/y)^\nu\Big]f(y)dy
		\Big|\\
		&\le C|f|_{L^\infty}\Big[1+\int_{-1}^{x}\frac{1}{|a(y)|}(x/y)^\nu dy\Big]
		\le C|f|_{L^\infty},
	\end{aligned}
\end{equation*}
where the last integral is bounded by that fact that $a(x) \sim x$ as $|x|\to 0$. 
From this, we easily obtain
\begin{equation*}
	\begin{aligned}
	\Big|\int_{-\infty}^{+\infty}e^{-\eta t}&\chi(x,y)\Big[1+\frac{1}{a(y)}(x/y)^\nu \Big]f(y)dy
	\Big|_{L^p(-1,0)}\le Ce^{-\eta t}|f|_{L^\infty},
	\end{aligned}
\end{equation*}
which proves the lemma.
\end{proof}
\medskip

\begin{remark}\label{rem-singterm}\rm
We note here that the singular term $a^{-1}(y)(x/y)^{\nu}$ appearing in \eqref{G0-est2} 
and \eqref{Rbounds} contributes  to the time-exponential decaying term.
Note that this part agrees with the resolvent kernel for the scalar
convected-damped equation $u_t + au_x=-Lgu,$ for which we can find
explicitly the Green function as a convected time-exponentially decaying
delta function similar to terms appearing in the relaxation or real viscosity case.
\end{remark}

\section{Nonlinear damping estimate}\label{sec:damping}

In this section, we establish an auxiliary damping energy estimate. 
We consider the eventual nonlinear perturbation equations for variables $(u,q)$ 
\begin{equation}\label{eqpert}
	\begin{aligned}
		u_{t}+ (\hat a(u)u)_x +Lq_{x} &=\dot \alpha (U_x+u_x), \\
		-q_{xx} + q +(\hat b(u)\,u)_{x} &=0,
	\end{aligned}
\end{equation}
where $\alpha$ represents the shock location and
\begin{equation*}
	\begin{aligned}
		\hat a(u) := \frac{df}{du}(U+u) = \frac{df}{du}(U) + \cO(|u|) = a(x) + \cO(|u|),\\
		\hat b(u) := \frac{dM}{du}(U+u) = \frac{dM}{du}(U) + \cO(|u|) = b(x) + \cO(|u|).
	\end{aligned}
\end{equation*}
(see subsequent Section \ref{sec:nonlinear}).
In view of
\begin{equation*}
	\hat a_x = a'(x) + \cO(|u|+|u_x|),
\end{equation*}
then, under assumptions \eqref{A4}, \eqref{A5k}, we get that, for all $|u|_\infty$ and 
$|u_x|_\infty$ sufficiently small, there holds
\begin{equation}\label{assump-ag}
	L\hat b + (k+\half)\hat a_x > 0,
\end{equation}
for all $k =1,2,3,4$ and all $x \sim 0$. 
We are going to profit from \eqref{assump-ag} to prove the following
\medskip

\begin{proposition}\label{prop-damping} 
Assume \eqref{A0} - \eqref{A5k}. 
So long as $|u|_{W^{2,\infty}}$ and $|\dot \alpha|$ remain sufficiently small, 
we obtain
\begin{equation}\label{damp-est}
	|u|_{H^k}^2(t)\le Ce^{-\eta t}|u|_{H^{k}}^2(0)+ C\int_0^t
		e^{-\eta(t-s)}(|u|_{L^2}^2+ |\dot\alpha|^2)(s)\,ds, 
		\qquad\eta>0,
\end{equation}
for $k=1,...,4$.
\end{proposition}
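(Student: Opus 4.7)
The plan is a standard Kawashima-style nonlinear damping estimate, enabled by a reformulation that converts the hyperbolic-elliptic system into a nonlocal scalar equation in $u$ where the radiative damping appears as a pointwise zero-order term. First I would use the elliptic equation to eliminate $q$. Since $-q_{xx}+q = -(\hat b u)_x$, we have $q = -\cK(\hat b u)_x$ with $\cK = (1-\partial_x^2)^{-1}$, and using the identity $\partial_x^2\cK = \cK - I$ one obtains
\begin{equation*}
  q_x = -\cK\,\partial_x^2(\hat b u) = \hat b u - \cK(\hat b u).
\end{equation*}
Substituting into \eqref{eqpert}$_1$ yields the nonlocal equation
\begin{equation*}
  u_t + (\hat a u)_x + L\hat b\, u = L\cK(\hat b u) + \dot\alpha(U_x + u_x),
\end{equation*}
where the good term $L\hat b\, u$ (with $L\hat b>0$ by (A4)) is the radiative damping, and $L\cK(\hat b u)$ is a compact/smoothing perturbation (gain of two derivatives).

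Next, I would apply $\partial_x^k$ to this equation and pair in $L^2$ with $\partial_x^k u$. The convective term expands by Leibniz as $\partial_x^{k+1}(\hat a u) = \hat a\,\partial_x^{k+1}u + (k+1)\hat a_x\,\partial_x^k u + \text{l.o.t.}$; integrating by parts on the leading piece and combining with the commutator contributes
\begin{equation*}
  \iprod{\partial_x^{k+1}(\hat a u),\,\partial_x^k u} = (k+\tfrac12)\int \hat a_x(\partial_x^k u)^2\,dx + \text{lower order in }u.
\end{equation*}
The damping term contributes $L\int \hat b(\partial_x^k u)^2 + \text{l.o.t.}$ Combining, the principal part of the energy identity is
\begin{equation*}
  \tfrac12\dt|\partial_x^k u|_{L^2}^2 + \int\bigl[(k+\tfrac12)\hat a_x + L\hat b\bigr](\partial_x^k u)^2\,dx = \text{(remainder)},
\end{equation*}
and by (A5$_k$) together with the smallness-based assumption \eqref{assump-ag}, the bracket is bounded below by some $\eta>0$, yielding genuine dissipation $\eta|\partial_x^k u|_{L^2}^2$.

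The remainder I would control in standard fashion: (a) the smoothing term $L\,\partial_x^k\cK(\hat b u)$ is bounded in $L^2$ by $|u|_{H^{\max(k-2,0)}}$ using the two-derivative gain of $\cK$, hence absorbed into $C|u|_{L^2}^2 + \epsilon|\partial_x^k u|_{L^2}^2$ via Young; (b) commutator terms involving higher $x$-derivatives of $\hat a, \hat b$ paired with lower $x$-derivatives of $u$ are bounded by $C|u|_{H^{k-1}}^2$ using smoothness of the profile and smallness of $|u|_{W^{2,\infty}}$; (c) the shift contribution $\dot\alpha\iprod{\partial_x^k(U_x+u_x),\,\partial_x^k u}$ is handled by Cauchy--Schwarz to produce $C|\dot\alpha|^2 + \epsilon|\partial_x^k u|_{L^2}^2 + C|u|_{H^{k-1}}^2$. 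This yields, for each $k=1,\ldots,4$,
\begin{equation*}
  \dt|\partial_x^k u|_{L^2}^2 + \eta|\partial_x^k u|_{L^2}^2 \le C\bigl(|u|_{H^{k-1}}^2 + |\dot\alpha|^2\bigr).
\end{equation*}
Summing these bounds with suitable weights (or iterating on $k$ starting from $k=1$, where $|u|_{H^{k-1}}^2 = |u|_{L^2}^2$), the $|u|_{H^{k-1}}^2$ terms on the right are absorbed into the left up to an $|u|_{L^2}^2$ remainder, giving a differential inequality of Gronwall form $\dt|u|_{H^k}^2 + \eta|u|_{H^k}^2 \le C(|u|_{L^2}^2 + |\dot\alpha|^2)$, which integrates to \eqref{damp-est}.

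The main obstacle I anticipate is twofold. First, the coefficient $(k+\tfrac12)\hat a_x + L\hat b$ must be positive \emph{pointwise in $x$} on the whole line, not merely near $x=0$; one verifies this by combining (A5$_k$) (which controls the critical point $x=0$ where $\hat a$ vanishes), the continuity of $\hat a_x$ together with its decay $\hat a_x\to 0$ at $\pm\infty$, and the uniform positivity of $L\hat b$ enforced by (A4). Second, the bookkeeping of the smoothing term $\cK(\hat b u)$ through the $\partial_x^k$ derivatives must be done carefully so that only $|u|_{L^2}$ (and not higher norms) appears unabsorbed on the right, which is what justifies the specific form of the right-hand side in \eqref{damp-est}.
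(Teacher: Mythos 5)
Your overall scheme (eliminate $q$ through the elliptic equation to expose the zero-order damping $L\hat b\,u$, integrate by parts on the transport term to produce the coefficient $(k+\tfrac12)\hat a_x$, close with commutator estimates and Gronwall) matches the skeleton of the paper's argument. But there is one genuine gap, and it sits exactly at the point you flag as the ``main obstacle'': you claim that $(k+\tfrac12)\hat a_x + L\hat b$ is positive \emph{pointwise on all of $\R$}, and you propose to verify this from (A5$_k$) at $x=0$, the decay $\hat a_x\to 0$ at $\pm\infty$, and the positivity of $L\hat b$. This does not follow. Continuity plus control at one point and at the two ends says nothing about intermediate $x$: since $a'(x)=\frac{d^2f}{du^2}(U)U'<0$ everywhere and the wave may have large amplitude, $|a'(x)|$ can be large at some $x_0\neq 0$ where $Lb(x_0)$ is only moderately positive, making the bracket negative there. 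The hypotheses (A5$_k$) constrain the coefficient only at the degeneracy point $x=0$, and indeed the paper's own statement \eqref{assump-ag} is explicitly asserted only for $x\sim 0$. With the bracket possibly negative on an intermediate region, your flat $L^2$ energy identity does not yield dissipation and the argument stalls.

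The paper closes this gap with a two-tier energy estimate that your proposal is missing. It first performs the $k$-th order estimate with the degenerate weight $\phi=|\hat a|^{2k+1}$, chosen precisely so that the transport contribution $\tfrac12(\hat a\phi)_x-(k+1)\hat a_x\phi$ vanishes identically; the resulting inequality \eqref{wk-est} provides dissipation proportional to $|\hat a|^{2k+1}|\partial_x^k u|^2$, which is effective wherever $|\hat a|$ is bounded away from zero, i.e.\ away from $x=0$. It then takes the flat estimate ($\phi\equiv 1$), whose dissipation coefficient $L\hat b+(k+\tfrac12)\hat a_x$ is positive only near $x=0$ by (A5$_k$). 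Adding a large multiple $M$ of the weighted estimate to the flat one gives the uniform lower bound \eqref{abound}, $M\theta_1|\hat a|^{2k+1}+L\hat b+(k+\tfrac12)\hat a_x\ge\eta_2>0$ for all $x$, and hence genuine damping of the full $|\partial_x^k u|_{L^2}^2$. If you incorporate this weighted/unweighted combination (or some equivalent localization splitting near and away from $x=0$), the rest of your proposal -- the treatment of the smoothing term, the commutators, the $\dot\alpha$ contribution, and the induction on $k$ followed by Gronwall -- goes through as you describe.
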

\medskip

\begin{proof} 
Let us work for the case $\dot\alpha \equiv 0$. 
The general case will be seen as a straightforward extension. 
For our convenience, we denote the $\phi-$weighted norm as
\begin{equation*}
	|f|_{H^k_\phi}: = \sum_{i=0}^k \iprod{\phi\, \partial_x^if,\partial_x^if}_{L^2}^{1/2}
\end{equation*}
for nonnegative functions $\phi$. 
Now, taking the inner product of $q$ against the second equation in
\eqref{eqpert} and applying the integration by parts, we obtain
\begin{equation*}
	|q_x|_{L^2}^2+|q|_{L^2}^2 
	=\iprod{\hat b u,q_x}\le \half |q_x|^2_{L^2} +C|u|_{L^2}^2.
\end{equation*}
In fact, we also can easily get for $k\geq 1$
\begin{equation}\label{estq}
	|q|_{H^k_\phi}\le C|u|_{H^{k-1}_\phi},
\end{equation} 
Likewise, taking the inner product of $u$ against the first equation
in \eqref{eqpert} and integrating by parts, we get
\begin{equation*}
	\frac 12\dt |u|_{L^2}^2=-\half \int   \hat a_x |u|^2 \, dx-\iprod{Lq_x,u}
\end{equation*} 
which together with \eqref{estq} and the H\"older inequality gives
\begin{equation}\label{u0est}
	\dt |u|_{L^2}^2 \le C |u|_{L^2}^2.
\end{equation}
In order to establish estimates for derivatives, for each $k\ge1$
and $\phi\ge0$ to be determined later, we compute
\begin{equation}\label{k-est}
	\frac12 \dt\iprod{\partial_x^k u,\phi\,\partial_x^k u} 
	=\iprod{\partial_x^k u_t,\phi\,\partial_x^k u}
	=-\iprod{L\partial_x^{k+1}q + \partial_x^{k+1}(\hat au),\phi\,\partial_x^k u}
\end{equation}
where, using the equation for $q$, we estimate
\begin{equation*}
	\begin{aligned}
	-\iprod{L\partial_x^{k+1}q,\phi\,\partial_x^k u} &= -\iprod{L\partial_x^{k}(\hat b u) 
		+L\partial_x^{k-1}q,\phi\,\partial_x^k u}\\
	&\le-\iprod{L\,\hat b\,\phi\,\partial_x^{k}u,\partial_x^k u} + \epsilon |\partial_x^k u|_{L^2_\phi}^2 
		+ C_\epsilon\Big[|u|_{H^{k-1}_\phi}^2+|q|_{H^{k-1}_\phi}^2\Big]\\
	&\le-\frac\eta2|\partial_x^k u|_{L^2_\phi}^2 + C|u|_{H^{k-1}_\phi}^2
	\end{aligned}
\end{equation*}
and
\begin{equation*}
	\begin{aligned}
	-\iprod{\partial_x^{k+1}(\hat au),\phi\,\partial_x^k u}
	&= -\iprod{\hat a\partial_x^{k+1}u+(k+1)\hat a_x\partial_x^{k}u 
	+ L.O.T,\phi\,\partial_x^k u}\\
	&=\iprod{ \Big(\half(\hat a\phi)_x 
	-(k+1)\hat a_x\phi\Big)\partial_x^{k}u ,\partial_x^k u}
	- \iprod{L.O.T,\phi\,\partial_x^k u}\\
	&\le\iprod{ \Big(\half(\hat a\phi)_x -(k+1)\hat a_x\phi\Big)\partial_x^{k}u ,\partial_x^k u} 
	+ \epsilon|\partial_x^ku|_{L^2_\phi}^2+C_\epsilon|u|_{H^{k-1}_\phi}^2
	\end{aligned}
\end{equation*}
By choosing $\phi:=|\hat a|^{2k+1}$, we observe that 
\begin{equation*}
	\half(\hat a\phi)_x-(k+1)\hat a_x\phi\equiv0
\end{equation*} 
and thus
\begin{equation*}
	-\iprod{\partial_x^{k+1}(\hat au),\phi\,\partial_x^k u} \le
	\epsilon|\partial_x^ku|_{L^2_\phi}^2+C_\epsilon|u|_{H^{k-1}_\phi}^2
\end{equation*}
for any positive number $\epsilon$. 
Taking $\epsilon$ small enough and putting these above estimates together 
into \eqref{k-est}, we have just obtained
\begin{equation}\label{wk-est}
	\begin{aligned} 
	\dt\iprod{\partial_x^k u,|\hat a|^{2k+1} \partial_x^k u}
 	&\le -\eta_1\iprod{\partial_x^k u,|\hat a|^{2k+1}\partial_x^k u} + C|u|_{H^{k-1}}^2,
	\end{aligned}
\end{equation} 
for each $k\ge 1$ and some small $\theta_1>0$.

In addition, by choosing $\phi \equiv 1$ in \eqref{k-est}, we obtain
\begin{equation}\label{k-est1}
	\begin{aligned} 
	\frac12\dt\iprod{\partial_x^k u,\partial_x^k u}
 	&= -\iprod{\Big(L\hat b + (k+\half)\hat a_x\Big)\partial_x^k u,\partial_x^k u} +
	\epsilon|\partial_x^ku|_{L^2}^2+C_\epsilon|u|_{H^{k-1}}^2,
	\end{aligned}
\end{equation} 
for any $\epsilon>0$. 
By assumption \eqref{assump-ag}, there exist $\eta_2$ sufficiently small and
$M>0$ sufficiently large such that
\begin{equation}\label{abound} 
	M \theta_1 |\hat a|^{2k+1} + \Big(L\hat b + (k+\half)\hat a_x\Big) \ge \eta_2>0,
\end{equation}
for all $x \in \R$ (by taking $M$ large enough away from zero; for $x \sim 0$ 
the bound follows from \eqref{assump-ag}).
Therefore, by adding \eqref{k-est1} with $M$ times \eqref{wk-est}, using \eqref{abound}, 
and taking $\epsilon = \eta_2/2$ in \eqref{k-est1}, we obtain
\begin{equation}\label{k-semifinal}
 	\dt\iprod{(1+M|\hat a|^{2k+1})\partial_x^k u,\partial_x^k u}
 	\le -\frac{\eta_2}{2}|\partial_x^k u|_{L^2}^2 + C|u|_{H^{k-1}}^2.
\end{equation}
Now, for $\delta>0$, let us define 
\begin{equation*}
 	\cE(t):= \sum_{i=0}^k\delta^i \iprod{(1+M|a|^{2k+1})\partial_x^k u,\partial_x^k u}.
\end{equation*}  
Observe that $\cE(t) \sim |u|_{H^k}^2$. We then use \eqref{u0est} and \eqref{k-semifinal}
for $k=1,...,4$ and take $\delta$ sufficiently small to derive
\begin{equation*}
 	\dt\cE(t) \le -\eta_3\, \cE(t) + C |u|_{L^2}^2(t)
\end{equation*}
for some $\eta_3>0$, from which \eqref{damp-est} follows 
by the standard Gronwall's inequality.
\end{proof}

\section{High--frequency estimate}
\label{sec:hfest}

In this section, we estimate the high--frequency part of the solution 
operator  $e^{\cL t}$ (see \eqref{iLT})
\begin{equation}\label{formS2}
	\cS_2(t) =\frac{1}{2\pi i}\int_{-\gamma_1-i\infty}^{-\gamma_1+i\infty}
	\chi_{{}_{\{|\Imag\lambda|\geq\gamma_2\}}}e^{\lambda t} (\lambda-\cL)^{-1} d\lambda,
\end{equation}
for small constants $\gamma_1,\gamma_2>0$
(here $\chi_{{}_{I}}$ is the characteristic function of the set $I$).
\medskip

\begin{proposition}[High-frequency estimate] \label{prop-HFest} 
Under assumptions \eqref{A0} - \eqref{A5k}, we obtain
\begin{equation}\label{HFsoln-est}
		|\partial^\kappa_x\cS_2(t)(\phi-L\,\partial_x(\cK \psi))|_{L^{2}}\le  C
			e^{-\eta_1t}\Big(|\psi|_{H^{\kappa+2}}+|\varphi|_{H^{\kappa+2}}\Big)
			\qquad\kappa=0,1,
\end{equation}
for some $\eta_1>0$, where $\cK = (-\partial_x^2 + 1)^{-1}$ and $L$ is
a constant (see \eqref{eq:redlin}).
\end{proposition}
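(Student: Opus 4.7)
The plan is to prove \eqref{HFsoln-est} by combining resolvent bounds on the vertical contour $\Real\lambda=-\gamma_1$, $|\Imag\lambda|\ge\gamma_2$, with the inverse--Laplace representation \eqref{formS2}. The key technical ingredient is an energy-estimate-based resolvent bound proved in the spirit of the damping lemma (Proposition \ref{prop-damping}), now applied to the Laplace-transformed linear problem and exploiting the extra coercivity furnished by the spectral parameter $\lambda$.

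First, I would establish a resolvent bound of the form
\[
|(\lambda-\cL)^{-1}g|_{H^{\kappa+1}}\le \frac{C}{(1+|\lambda|)^2}\,|g|_{H^{\kappa+2}},\qquad \Real\lambda=-\gamma_1,\ |\Imag\lambda|\ge \gamma_2.
\]
Starting from the Laplace-transformed equation $\lambda u+(a(x)u)_x+\cJ u=g$ and following the integration-by-parts scheme of Section \ref{sec:damping}, I pair $\partial_x^j u$ (for $j=0,\ldots,\kappa+1$) against the weighted test functions $(1+M|a|^{2j+1})\partial_x^j u$. Taking real parts yields coercivity of the form $(\Real\lambda+\eta_2)\,|u|_{H^{\kappa+1}}^2$: assumption (A$5_k$) controls the singular contribution of $a'(x)$ near $x=0$ via the algebraic identity $\tfrac12(a\phi)_x-(k+1)a_x\phi\equiv 0$ with weight $\phi=|a|^{2k+1}$, while the coupled equation $-q''+q+(bu)_x=0$ extracts the positive damping $L\,b\,|\partial_x^j u|^2$ from the nonlocal term $\cJ u$, exactly as in the damping estimate. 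Taking imaginary parts produces an $\Imag\lambda$ factor, giving a $(1+|\lambda|)^{-1}$ gain. The full $(1+|\lambda|)^{-2}$ bound then follows by a regularity-for-decay iteration: rewriting $\lambda u=g-(au)_x-\cJ u$ shows that one factor of $\lambda$ costs one derivative of $g$, and a second application of the same scheme produces the quadratic decay at the cost of one further order of regularity in the source.

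Substituting the resolvent bound into \eqref{formS2}, parametrized as $\lambda=-\gamma_1+i\tau$, gives
\[
|\partial_x^\kappa\cS_2(t)g|_{L^2}\le Ce^{-\gamma_1 t}\int_{|\tau|\ge\gamma_2}\frac{d\tau}{(1+|\tau|)^2}\,|g|_{H^{\kappa+2}}\le Ce^{-\gamma_1 t}|g|_{H^{\kappa+2}}.
\]
For the source $g=\varphi-L\,\partial_x(\cK\psi)$, one has $|g|_{H^{\kappa+2}}\le|\varphi|_{H^{\kappa+2}}+L|\partial_x\cK\psi|_{H^{\kappa+2}}\le C(|\varphi|_{H^{\kappa+2}}+|\psi|_{H^{\kappa+1}})$, since $\cK=(1-\partial_x^2)^{-1}$ smooths by two derivatives; setting $\eta_1:=\gamma_1$ yields \eqref{HFsoln-est}.

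The main obstacle is closing the weighted resolvent energy estimate in the complex-valued setting: one must verify that the weighted cancellation of Section \ref{sec:damping} survives the presence of the $\lambda u$ term (which it does, since the cancellation is confined to the first-order part $(au)_x$) and that the lower-order commutator terms introduced by the regularity-for-decay iteration do not accumulate so as to spoil the $\Real\lambda\ge-\gamma_1$ offset. A secondary subtlety is choosing $\gamma_2$ large enough relative to the $|a|_{L^\infty}$-dependent constants of the damping scheme so that the $|\Imag\lambda|$-coercivity dominates uniformly in the high-frequency tail $|\Imag\lambda|\ge\gamma_2$.
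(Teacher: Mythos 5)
Your plan founders on the claimed resolvent bound
\[
|(\lambda-\cL)^{-1}g|_{H^{\kappa+1}}\le \frac{C}{(1+|\lambda|)^2}\,|g|_{H^{\kappa+2}},
\]
which is false for any generator of a $C^0$ semigroup: as $|\lambda|\to\infty$ one has $\lambda(\lambda-\cL)^{-1}g\to g$, so $|(\lambda-\cL)^{-1}g|_{L^2}$ is genuinely of order $|\lambda|^{-1}|g|_{L^2}$ and no amount of regularity on $g$ can improve this to $|\lambda|^{-2}$. Your ``regularity-for-decay iteration'' exposes exactly why: writing $u=\lambda^{-1}g+\lambda^{-1}\cL u$ and iterating gives $u=\lambda^{-1}g+\lambda^{-2}\cL g+\lambda^{-2}\cL^{2}u$, and while the last two terms do decay quadratically at the cost of two derivatives, the free term $\lambda^{-1}g$ persists at order $|\lambda|^{-1}$. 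Consequently the integrand in \eqref{formS2} is not $\cO((1+|\tau|)^{-2})$, the contour integral does not converge absolutely, and the final step of your argument collapses. (A secondary inaccuracy: the imaginary-part energy estimate yields $|\Imag\lambda|\,|u|_{L^2}^2\le C(|u|_{H^1}^2+\dots)$, i.e.\ a gain of $|\lambda|^{-1/2}$ on $|u|_{L^2}$, not $|\lambda|^{-1}$; the full power per derivative comes only from the algebraic identity above, not from the energy scheme.)

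The missing idea, which is how the paper proceeds, is to split off the non-integrable free term before estimating. Using the resolvent identity $(\lambda-\cL)^{-1}\varphi=\lambda^{-1}(\lambda-\cL)^{-1}\cL\varphi+\lambda^{-1}\varphi$, the first piece is bounded by $C|\lambda|^{-1}\cdot|\lambda|^{-1/2}|\cL\varphi|_{H^1}\le C|\lambda|^{-3/2}|\varphi|_{H^2}$ (Proposition \ref{prop-resHF}, supplemented by the mid-frequency bound of Proposition \ref{prop-resMF}), which is absolutely integrable along $\Real\lambda=-\gamma_1$. The second piece, $\lambda^{-1}\varphi$, must be handled by explicit contour integration: the integral of $\lambda^{-1}e^{\lambda t}$ over the full vertical line $\Real\lambda=-\gamma_1$ vanishes for $t>0$ (no poles to the left), so the cutoff $\chi_{\{|\Imag\lambda|\ge\gamma_2\}}$ leaves only the finite segment $|\Imag\lambda|\le\gamma_2$, contributing $Ce^{-\gamma_1 t}|\varphi|_{L^2}$. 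Your energy-estimate framework for the remainder is sound and is essentially what Proposition \ref{prop-resHF} does, but without the explicit treatment of the $\lambda^{-1}\varphi$ term the proof does not close.
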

\medskip

Our first step in proving \eqref{HFsoln-est} is to estimate the solution 
of the resolvent system
\begin{equation*}
\begin{aligned}
    \lambda u + (a(x)\,u)_{x} +L q_{x} &=\varphi,\\
    - q_{xx} +q +(b(x)\, u)_{x} &=\psi,
\end{aligned}
\end{equation*} 
where $a(x)=\dfrac{df}{du}(U(x))$ and $b(x)=\dfrac{dM}{du}(U)$ as before.
\medskip

\begin{proposition}[High-frequency bounds]\label{prop-resHF} 
Assuming \eqref{A0} - \eqref{A5k}, for some $R,C$ sufficiently large
and $\gamma>0$ sufficiently small, we obtain
\begin{equation*}
\begin{aligned}
	|(\lambda - \cL)^{-1}(\varphi-L\partial_x (\cK \psi))|_{H^1} 
	&\le C \Big( |\varphi|_{H^1}^2+|\psi|_{L^2}^2 \Big),\\
	|(\lambda - \cL)^{-1}(\varphi-L\partial_x (\cK \psi))|_{L^2} 
	&\le \frac{C}{|\lambda|^{1/2}}\Big(|\varphi|_{H^1}^2+|\psi|_{L^2}^2\Big),
\end{aligned}
\end{equation*}
for all $|\lambda|\ge R$ and $\Real\lambda \ge -\gamma$.
\end{proposition}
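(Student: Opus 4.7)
The plan is to reduce the resolvent problem to a scalar equation for $u$ alone by using the second equation of~\eqref{eq:fulllin} to express $q=\cK(\psi-(b(x)u)_x)$ and substituting into the first equation, giving $(\lambda-\cL)u=f$ with $f:=\varphi-L\partial_x\cK\psi$. Since $\partial_x\cK$ gains one derivative on $L^2$, one has the convenient bounds $|f|_{L^2}\le C(|\varphi|_{L^2}+|\psi|_{L^2})$ and $|f|_{H^1}\le C(|\varphi|_{H^1}+|\psi|_{L^2})$, so it suffices to prove a linear resolvent bound at the $H^1$ level and an additional $|\lambda|^{-1/2}$ gain at the $L^2$ level in terms of $|f|_{H^1}$ and $|f|_{L^2}$.

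For the $H^1$ estimate I would mimic the weighted energy argument of Proposition~\ref{prop-damping}. Differentiating the equation and pairing with $\bar u_x$, integration by parts in the transport term produces $\tfrac32\int a'(x)|u_x|^2\,dx$, while replacing $Lq_{xx}$ by $L(q+(bu)_x-\psi)$ via the elliptic equation yields the crucial damping contribution $\int L\,b(x)|u_x|^2\,dx$. The combined coefficient $Lb+\tfrac32 a'$ is strictly positive at $x=0$ by assumption~\eqref{A5k} with $k=1$, hence in a neighborhood of the singularity. Away from $x=0$, where $|a|$ is bounded below, the convex combination with the $|a|^{3}$--weighted energy
\begin{equation*}
\iprod{(1+M|a|^{3})\,\partial_x u,\,\partial_x u}
\end{equation*}
used in the proof of Proposition~\ref{prop-damping} absorbs the remaining negative contribution; lower-order commutators and the compact $q$-terms are controlled by $|f|_{H^1}$ and by $|u|_{L^2}$. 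Choosing $\gamma>0$ small enough so that $\Real\lambda\ge-\gamma$ does not overwhelm the damping, this yields $|u|_{H^1}^2\le C(|f|_{H^1}^2+|u|_{L^2}^2)$, which is closed by the $L^2$ estimate below.

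For the improved $L^2$ bound I would exploit the imaginary part of the basic pairing $\iprod{(\lambda-\cL)u,u}=\iprod{f,u}$. Since $\int a'|u|^2$ is real and $|q_x|_{L^2}\le C(|u|_{L^2}+|\psi|_{L^2})$, taking imaginary parts gives
\begin{equation*}
|\Imag\lambda|\,|u|_{L^2}^2 \le C\bigl(|u_x|_{L^2}+|u|_{L^2}+|\psi|_{L^2}+|\varphi|_{L^2}\bigr)\,|u|_{L^2}.
\end{equation*}
Once $|\lambda|\ge R$ with $|\Real\lambda|\le\gamma$, one has $|\Imag\lambda|\ge\tfrac12|\lambda|$; combining with the $H^1$ bound from the previous step and applying Young's inequality yields $|u|_{L^2}^2\le C|\lambda|^{-1}(|\varphi|_{H^1}^2+|\psi|_{L^2}^2)$, which is the announced $|\lambda|^{-1/2}$ estimate.

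The main obstacle is the familiar one stemming from the degeneracy of $a(x)$ at $x=0$: a naive $H^1$ energy estimate generates the sign-indefinite contribution $\tfrac32\int a'|u_x|^2$, and the weighted-multiplier trick of Proposition~\ref{prop-damping} is needed precisely to cancel it, with coercivity hinging sharply on $(\mathrm{A}5_1)$. A secondary subtlety is that the imaginary-part estimate for $|u|_{L^2}$ picks up the cross term $\Imag\int a\,u_x\bar u$, which must be absorbed using the $H^1$ bound; the two estimates must therefore be closed in tandem rather than in succession.
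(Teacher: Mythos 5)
Your proposal is correct and follows essentially the same route as the paper: a real-part (Laplace-transformed) version of the weighted damping energy estimate of Proposition \ref{prop-damping} with $k=1$ at the $H^1$ level, combined with the imaginary-part pairing $\iprod{(\lambda-\cL)u,u}$ at the $L^2$ level, the two closed together by taking $|\lambda|\ge R$ large to absorb the residual $|u|_{L^2}^2$ term. Your identification of the damping coefficient $Lb+\tfrac32 a'$ and its positivity via $(\mathrm{A}5_1)$ matches the paper's estimate exactly.
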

\medskip

\begin{proof} 
A Laplace transformed version of the nonlinear energy estimates 
\eqref{u0est} and \eqref{k-semifinal} in Section \ref{sec:damping} with $k = 1$
(see \cite{Z7}, pp. 272--273, proof of Proposition 4.7 for further details) yields
\begin{equation}\label{Re-est} 
	\begin{aligned} 
		\Big(\Real\lambda+\frac{\gamma_1}{2}\Big)|u|_{H^1}^2\le C\Big(|u|_{L^2}^2 
		+ |\varphi|_{H^1}^2+|\psi|_{L^2}^2\Big).
	\end{aligned}
\end{equation}
On the other hand, taking the imaginary part of the $L^2$ inner product of $U$ 
against $\lambda u = \cL u + \partial_xL\cK h + f$ and applying the Young's inequality, 
we also obtain the standard estimate
\begin{equation}\label{Im-est}
	\begin{aligned} 
		|\Imag\lambda||u|_{L^2}^2&\le |\iprod{\cL u,u}| 
		+ |\iprod{L\cK \psi,u_x}| + |\iprod{\varphi,u}| \\
		&\le C \Big(|u|_{H^1}^2 + |\psi|_{L^2}^2 + |\varphi|_{L^2}^2\Big),
	\end{aligned}
\end{equation}
noting the fact that $\cL$ is a bounded operator from $H^1$ to $L^2$
and $\cK$ is bounded from $L^2$ to $H^1$.

Therefore, taking $\gamma=\gamma_1/4$, we obtain from \eqref{Re-est} and \eqref{Im-est}
\begin{equation*}
	|\lambda||u|_{L^2}^2 + |u|_{H^1}^2\le C \Big(|u|_{L^2}^2 
	+ |\psi|_{L^2}^2 + |\varphi|_{H^1}^2\Big),
\end{equation*}
for any $\Real\lambda \ge -\gamma$. 
Now take $R$ sufficiently large such that $|u|_{L^2}^2$ on the right  hand side 
of the above can be absorbed into the left hand side for $|\lambda|\ge R$, thus yielding
\begin{equation*}
	|\lambda||u|_{L^2}^2 + |u|_{H^1}^2
	\le C \Big( |\psi|_{L^2}^2 + |\varphi|_{H^1}^2\Big),
\end{equation*}
for some large $C>0$, which gives the result as claimed.
\end{proof}

Next, we have the following
\medskip

\begin{proposition}[Mid-frequency bounds]\label{prop-resMF}
Assuming \eqref{A0} - \eqref{A5k}, we obtain
\begin{equation*}
	|(\lambda - \cL)^{-1}\varphi|_{L^2} \le C\,|\varphi|_{H^1}
	\quad \textrm{ for } \;
	R^{-1}\le |\lambda|\le R \mbox{ and }\Real\lambda \ge -\gamma,
\end{equation*} 
for any $R$ and $C=C(R)$ sufficiently large and $\gamma = \gamma(R)>0$ 
sufficiently small.
\end{proposition}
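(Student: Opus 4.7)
\smallskip

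\noindent\textbf{Proof plan.} The strategy is the standard compactness argument for mid-frequency resolvent bounds, made possible here by the Evans function machinery developed in Section~\ref{sec:resolker} and the spectral stability condition~\eqref{Dcond}. The overall idea is to show that on the compact set
\begin{equation*}
    K_\gamma := \{\lambda\in\C \,:\, R^{-1}\le |\lambda|\le R,\; \Real\lambda \ge -\gamma\},
\end{equation*}
the operator-valued map $\lambda \mapsto (\lambda-\cL)^{-1}$ is analytic as a map from $H^1$ into $L^2$, hence bounded uniformly by compactness.

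\smallskip

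\noindent\emph{Step 1: Locating $K_\gamma$ in the consistent splitting set.}
Recall from Remark~\ref{rem:notconsistent} that the set $\Lambda$ of (not so) consistent splitting is open and \emph{strictly} contains $\{\Real\lambda\ge 0\}\setminus\{0\}$, since $Lb_\pm>0$ under \eqref{A4}. Because $\{R^{-1}\le|\lambda|\le R,\;\Real\lambda\ge 0\}$ is compact and lies inside the open set $\Lambda$, one may choose $\gamma=\gamma(R)>0$ small enough that $K_\gamma\subset \Lambda$.
On $\Lambda$ the resolvent kernel is constructed by the formula \eqref{eq:formcolu} in terms of the decaying modes and the Evans functions $D_\pm(\lambda)$, and is therefore meromorphic in $\lambda\in\Lambda$ with poles only at zeros of $D_\pm$.

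\smallskip

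\noindent\emph{Step 2: Excluding spectrum on $K_\gamma$.}
By the spectral stability hypothesis~\eqref{Dcond}, $D_\pm$ are nonvanishing on $\{\Real\lambda\ge 0\}\setminus\{0\}$. By analyticity of $D_\pm$ on $\Lambda$ (see Lemma~\ref{lem-estmodes} and the construction of $W_j^\pm$) and compactness of $K_\gamma \cap \{\Real\lambda\ge 0\}$, there is $\gamma_1>0$ with $D_\pm(\lambda)\ne 0$ on $K_{\gamma_1}$; I will shrink $\gamma$ if necessary so that this holds. Combined with Step~1, this places $K_\gamma$ entirely in the resolvent set of $\cL$, and the explicit formula for $\cG_\lambda$ yields analyticity of $(\lambda-\cL)^{-1}$ on a neighborhood of $K_\gamma$ as a map on distributions.

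\smallskip

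\noindent\emph{Step 3: From pointwise to operator bounds, and uniformization.}
Using the representation \eqref{eq:formcolu} together with the normal-mode bounds of Lemma~\ref{lem-estmodes} and the regularity result Lemma~\ref{lem:tech} near $x=0$, one verifies that for each fixed $\lambda\in K_\gamma$ the kernel $G_\lambda(x,y)$ defines a bounded operator $H^1\to L^2$ (the $H^1$ control of the source absorbs the derivative appearing in the representation $u(x)=\int G_\lambda(x,y)\varphi(y)\,dy$ arising from the nonlocal piece $-L\partial_x\cK\partial_x(b(x)\,\cdot\,)$ of $\cL$). The operator norm depends continuously on $\lambda$, by dominated convergence applied to the explicit kernel representation and uniform-in-$\lambda$ bounds on the modes $\psi_j^\pm,\phi_3^\pm$ on compact subsets of $\Lambda$ away from zeros of $D_\pm$. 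Continuity of the operator norm on the compact $K_\gamma$ then yields a uniform $C=C(R)$ so that $|(\lambda-\cL)^{-1}\varphi|_{L^2}\le C|\varphi|_{H^1}$ on $K_\gamma$.

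\smallskip

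\noindent\emph{Main obstacle.}
The nontrivial point is \emph{not} the compactness/continuity step, but ensuring that the mode-by-mode representation of $(\lambda-\cL)^{-1}$ really defines a bounded $H^1\to L^2$ operator in a $\lambda$-continuous fashion through the singularity at $x=0$. Step~3 relies on the delicate pointwise tracking in Lemma~\ref{lem-pttracking} and Proposition~\ref{prop:smallep}, which give an integrable control on the would-be singularity of the $u$-component of the modes as $x\to 0$; together with the kernel bounds of Propositions~\ref{prop-nearzero}--\ref{prop-awayzero} extended by continuity from $\{\Real\lambda>0\}$ to $K_\gamma$, this suffices. Once the resolvent is controlled on $K_\gamma$, the stated bound is immediate by compactness.
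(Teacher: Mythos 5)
Your proof is correct and follows essentially the same route as the paper, which disposes of the proposition in one line by invoking compactness of the frequency set together with analyticity of the resolvent $(\lambda-\cL)^{-1}$ with respect to $H^1$ in $\lambda$; your Steps 1--3 simply unpack why that analyticity holds, via the consistent-splitting set, the Evans-function construction, and condition \eqref{Dcond}. The only caveat is that the auxiliary results you lean on in Step 3 (Lemma \ref{lem-pttracking}, Propositions \ref{prop:smallep}, \ref{prop-nearzero} and \ref{prop-awayzero}) are stated only in the low-frequency regime $\lambda\sim 0$, so for the behavior near the singularity $x=0$ at mid-frequencies you should instead appeal to Lemma \ref{lem:tech}, which is valid for all $\Real\lambda>-Lb(0)$ and hence on the whole region $K_\gamma$ once $\gamma$ is small.
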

\medskip

\begin{proof}
Immediate, by compactness of the set of frequency under consideration together 
with the fact that the resolvent $(\lambda-\cL)^{-1}$ is analytic with respect to $H^{1}$ 
in $\lambda$; see, for instance, \cite{Z4}. 
\end{proof}
\medskip

With Propositions \ref{prop-resHF} and \ref{prop-resMF} in hand, we are now ready to give:
\medskip

\begin{proof}[Proof of Proposition \ref{prop-HFest}] 
The proof starts with the following resolvent identity, using analyticity on the 
resolvent set $\rho(\cL)$ of the resolvent $(\lambda-\cL)^{-1}$, for all  $\varphi\in \mathcal{D}(\cL)$,
\begin{equation*}
(\lambda-\cL)^{-1}\varphi=\lambda^{-1}(\lambda-\cL)^{-1}\cL \varphi+\lambda^{-1}\varphi.
\end{equation*}
Using this identity and \eqref{formS2}, we estimate
\begin{equation*}
	\begin{aligned}
		\cS_2(t)\varphi &=\frac{1}{2\pi i}\int_{-\gamma_1-i\infty}^{-\gamma_1+i\infty}
		\chi_{{}_{\{|\Imag\lambda|\geq\gamma_2\}}}e^{\lambda t}
		\lambda^{-1}(\lambda-\cL)^{-1}\cL\,\varphi\,d\lambda\\
		&\quad+\frac{1}{2\pi i}\int_{-\gamma_1-i\infty}^{-\gamma_1+i\infty}
		\chi_{{}_{\{|\Imag\lambda|\geq\gamma_2\}}} e^{\lambda t}\lambda^{-1}\varphi\,d\lambda\\
		&=:S_1 + S_2,
	\end{aligned}
\end{equation*}
where, by Propositions \ref{prop-HFest} and \ref{prop-resMF}, we have
\begin{equation*}
	\begin{aligned}
		|S_1|_{L^2}&\le C \int_{-\gamma_1-i\infty}^{-\gamma_1+i\infty}
		|\lambda|^{-1}e^{\Real \lambda t}|(\lambda-\cL)^{-1}\cL
		\varphi|_{L^2}|d\lambda|\\
		&\le C e^{-\gamma_1 t}\int_{-\gamma_1-i\infty}^{-\gamma_1+i\infty} |\lambda|^{-3/2}|\cL
		\varphi|_{H^1}|d\lambda|\\
		&\le C e^{-\gamma_1t}|\varphi|_{H^{2}}
	\end{aligned}
\end{equation*}
and
\begin{equation*}
	\begin{aligned}
	|S_2|_{L^2}&\leq\frac{1}{2\pi }\Big|\varphi\int_{-\gamma_1-i\infty}^{-\gamma_1+i\infty}
	\lambda^{-1}e^{\lambda t} d\lambda\Big|_{L^2}+ \frac{1}{2\pi }
	\Big|\varphi\int_{-\gamma_1-i r}^{-\gamma_1+i r}\lambda^{-1}e^{\lambda t} d\lambda \Big|_{L^2}\\
	&\leq Ce^{-\gamma_1 t} |\varphi|_{L^2},
\end{aligned}
\end{equation*}
by direct computations, noting that the integral in $\lambda$ in the first term is identically zero.
This completes the proof of the bound for the term involving $\varphi$ as stated in the proposition. 
The estimate involving $\psi$ follows by observing that $L\,\partial_x \cK$ is bounded from $H^s$ to $H^s$.
Derivative bounds can be obtained similarly. 
\end{proof}
\medskip

\begin{remark}\label{rem-HF}\rm
We note that in our treating the high-frequency terms by energy estimates (as also done 
in \cite{KZ,NZ2}), we are ignoring the pointwise contribution there, which would also be 
convected time-decaying delta functions. 
To see these features, a simple exercise is to do the Fourier transform of the equations 
about a constant state.
\end{remark}

\section{Nonlinear analysis}\label{sec:nonlinear}

In this section, we shall prove the main nonlinear stability
theorem. Following \cite{HZ,MaZ3}, define the nonlinear perturbation
\begin{equation}\label{per-variable}
	\begin{pmatrix}u\\q \end{pmatrix}(x,t): =
	\begin{pmatrix}\tilde u\\\tilde q \end{pmatrix}(x+\alpha(t),t) -
	\begin{pmatrix}U\\Q \end{pmatrix}(x),
\end{equation}
where the shock location $\alpha(t)$ is to be determined later.

Plugging \eqref{per-variable} into \eqref{modelsyst}, we obtain the perturbation equation
 \begin{equation*}
    \begin{aligned}
    u_{t}+ (a(x)\,u)_x  +L\, q_{x}&=N_1(u)_x + \dot\alpha(t)\,(u_x+U_x),\\
     - q_{xx} +  q +(b(x)\,u)_{x} &=N_2(u)_x,
    \end{aligned}
\end{equation*}
where $N_j(u)=O(|u|^2)$ so long as $u$ stays uniformly bounded.

We decompose the Green function as
\begin{equation}\label{Greendecomp}
	G(x,t;y) = G^I(x,t;y) +G^{II}(x,t;y)
\end{equation} where
$G^I(x,t;y)$ is the low-frequency part. 
We further define as in Proposition \ref{prop-greenbounds},
\begin{equation*}
	\widetilde G^I(x,t;y) = G^I(x,t;y) - E(x,t;y)  - R(x,t;y)
\end{equation*}
and
\begin{equation*}
	\widetilde G^{II}(x,t;y) = G^{II}(x,t;y) + R(x,t;y).
\end{equation*}
Then, we immediately obtain, from Lemmas
\ref{lem-estGI}, \ref{lem-estR} and Proposition \ref{prop-HFest}, the following
\medskip

\begin{lemma}\label{lem-estGI+II} 
There holds
\begin{equation}\label{est-tGI}
	\Big|\int_{-\infty}^{+\infty} \partial_y^\beta\widetilde
	G^I(\cdot,t;y) f(y)dy \Big|_{L^p} \le C (1+t)^{-\frac 12
	(1/q-1/p)-|\beta|/2}|f|_{L^q},
\end{equation}
for all $1\le q\le p, \beta=0,1,$ and
\begin{equation}\label{est-tGII}
	\Big|\int_{-\infty}^{+\infty}\widetilde G^{II}(x,t;y)f(y)dy
	\Big|_{L^p} \le C e^{-\eta t}|f|_{H^3},
\end{equation} 
for all $2\le p\le \infty$.
\end{lemma}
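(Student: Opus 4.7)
The plan is straightforward: bound \eqref{est-tGI} is essentially a restatement of Lemma~\ref{lem-estGI} applied to $\widetilde G^I$, which was already established in the previous section (note that $\widetilde G^I$ was defined precisely so that this bound applies to it, with the pole and singular-residue pieces $E$ and $R$ peeled off). So no further work is needed for the first inequality, and the proof reduces entirely to establishing \eqref{est-tGII}.

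For \eqref{est-tGII}, I would use the decomposition $\widetilde G^{II} = G^{II} + R$ (from the definition preceding the lemma) and treat the two pieces separately. The $R$ contribution is controlled directly by Lemma~\ref{lem-estR}, which gives
\begin{equation*}
	\Big|\int_{-\infty}^{+\infty} R(\cdot,t;y) f(y) \, dy \Big|_{L^p} \le C e^{-\eta t}(|f|_{L^p} + |f|_{L^\infty}).
\end{equation*}
For $p \ge 2$, the one-dimensional Sobolev embedding $H^1 \hookrightarrow L^p \cap L^\infty$ yields $|f|_{L^p} + |f|_{L^\infty} \le C|f|_{H^1} \le C|f|_{H^3}$, so this part fits comfortably into the claimed bound.

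The $G^{II}$ contribution is handled by reinterpreting the convolution as the high-frequency solution operator: by definition of $G^{II}$ through the complementary part of the spectral integral \eqref{iLT}, one has $\int G^{II}(\cdot,t;y) f(y)\,dy = \cS_2(t) f$ (modulo a time-exponentially small intermediate-frequency piece handled by Proposition~\ref{prop-resMF} exactly as in the proof of Proposition~\ref{prop-HFest}). Applying Proposition~\ref{prop-HFest} with $\psi \equiv 0$ and $\varphi = f$, for $\kappa = 0, 1$,
\begin{equation*}
	|\partial_x^\kappa \cS_2(t) f|_{L^2} \le C e^{-\eta_1 t} |f|_{H^{\kappa+2}},
\end{equation*}
so in particular $|\cS_2(t) f|_{H^1} \le C e^{-\eta_1 t} |f|_{H^3}$. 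Invoking the Sobolev embedding once more gives $|\cS_2(t) f|_{L^p} \le C e^{-\eta_1 t} |f|_{H^3}$ for all $p \ge 2$.

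Adding the two estimates and taking the smaller of the two exponential rates produces \eqref{est-tGII}. The only technical nuisance\,---\,and it is genuinely minor\,---\,is the passage from the operator bound of Proposition~\ref{prop-HFest} to the kernel convolution bound, but this is routine since the integral \eqref{formS2} converges absolutely in $\mathcal{L}(H^2, L^2)$ and the action on $f$ coincides with convolution against the distributional kernel $G^{II}$ by Fubini. There is no real obstacle to overcome; the lemma is essentially a repackaging of Lemma~\ref{lem-estGI}, Lemma~\ref{lem-estR}, and Proposition~\ref{prop-HFest} into a form directly usable for the nonlinear iteration via Duhamel's formula.
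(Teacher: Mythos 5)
Your argument matches the paper's own proof: the first bound is indeed just Lemma \ref{lem-estGI} restated, and the second is obtained exactly as the paper does it, by combining Lemma \ref{lem-estR} for the $R$ piece with Proposition \ref{prop-HFest} for the $G^{II}$ piece and then passing from $L^2$/$H^1$ control to $L^p$, $2\le p\le\infty$, via interpolation/Sobolev embedding. No gaps; the proposal is correct and follows the same route.
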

\medskip

\begin{proof} 
Bound \eqref{est-tGI} is precisely the estimate \eqref{estGI} in Lemma \ref{lem-estGI}, 
recalled here for our convenience. 
Inequality \eqref{est-tGII} is a straightforward combination of Lemma \ref{lem-estR} 
and Proposition \ref{prop-HFest}, followed by a use of the interpolation inequality 
between $L^2$ and $L^\infty$ and an application of the standard Sobolev imbedding. 
\end{proof}
\medskip

We next show that by Duhamel's principle we have:
\medskip

\begin{lemma} 
There hold the reduced integral representations:
\begin{equation}\label{int-rep}
	\begin{aligned}
		u(x,t)=& \int_{-\infty}^{+\infty} (\widetilde G^I+\widetilde G^{II})(x,t;y)u_0(y)dy \\
		&- \int_0^t \int_{-\infty}^{+\infty} \widetilde G_y^I(x,t-s;y)
			\Big(N_1(u) - L\,\cK\,\partial_y N_2(u) + \dot\alpha\,u\Big)(y,s)\, dy\, ds\\
		&+ \int_0^t \int_{-\infty}^{+\infty}\widetilde G^{II}(x,t-s;y)
			\Big( N_1(u) - L\,\cK\,\partial_y N_2(u) + \dot\alpha\,u\Big)_y(y,s)\, dy\, ds,\\
		q(x,t) =& (\cK \partial_x)( N_2(u)-b\,u) (x,t),
\end{aligned}
\end{equation}
and
\begin{equation}\label{alpha-rep}
	\begin{aligned}
		\alpha(t)=& -\int_{-\infty}^{+\infty}e(y,t)u_0(y)dy \\
		&+ \int_0^t \int_{-\infty}^{+\infty} e_{y}(y,t-s)
			\Big(N_1(u) - L\,\cK\,\partial_y N_2(u) + \dot\alpha\,u\Big)(y,s)\, dy\, ds.
	\end{aligned}
\end{equation}
\begin{equation}\label{alphader-rep}
	\begin{aligned}
		\dot\alpha(t)=& -\int_{-\infty}^{+\infty}e_t(y,t)u_0(y)dy \\
		&+ \int_0^t \int_{-\infty}^{+\infty}e_{yt}(y,t-s)
			\Big(N_1(u) - L\,\cK\,\partial_y N_2(u) + \dot\alpha\,u\Big)(y,s)\, dy\, ds.
	\end{aligned}
\end{equation}
\end{lemma}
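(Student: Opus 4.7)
The plan is a classical Duhamel reduction combined with a judicious choice of the shock shift $\alpha(t)$ that absorbs the non-decaying, translational part of the Green function, following the template of \cite{HZ,ZH,MaZ3}. First, I would convert the $(u,q)$ perturbation system into a scalar nonlocal equation for $u$ alone. The elliptic equation immediately gives $q = \cK\,\partial_x(N_2(u) - b(x)u)$, which is the stated formula for $q$. Substituting this into the first equation and invoking the definition of $\cL$ from \eqref{eq:redlin}, together with the identity $\dot\alpha\, u_x = \partial_x(\dot\alpha\, u)$ (since $\dot\alpha$ depends only on $t$), produces
\begin{equation*}
u_t - \cL u \;=\; \partial_x H(u) + \dot\alpha(t)\,U_x, \qquad H(u):=N_1(u) - L\,\cK\,\partial_x N_2(u) + \dot\alpha\,u.
\end{equation*}

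Duhamel then expresses $u(x,t)$ as $\int G(x,t;y)u_0(y)\,dy$ plus the time-convolution of $G$ against $\partial_y H + \dot\alpha(s) U_y$. I exploit two structural facts. By translational invariance one checks directly (using the profile equations and the identity $\partial_x\cK\partial_x=\cK-\mathrm{id}$) that $\cL U_x \equiv 0$, hence $e^{\cL t}U_x=U_x$ and $\int G(x,t-s;y)U_y(y)\,dy=U_x(x)$; integrating in $s$ and using $\alpha(0)=0$ then contributes the non-decaying piece $U_x(x)\alpha(t)$. Second, I decompose $G=E+\widetilde G^I+\widetilde G^{II}$ as in Proposition \ref{prop-greenbounds}. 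On the $\widetilde G^I$ summand I integrate by parts in $y$ to move $\partial_y$ onto the kernel (no boundary contribution at $|y|=\infty$ because $u$, and hence $H$, decays), producing $-\int\widetilde G^I_y H$; on the $\widetilde G^{II}$ summand I leave $\partial_y$ on $H$, which suffices since $\widetilde G^{II}$ decays exponentially in $t$. Because $E(x,t;y)=\bar U_x(x)[u]^{-1}e(y,t)$ factorizes, the combined $E$-contribution and the $U_x(x)\alpha(t)$ term reduce to $U_x(x)$ multiplied by an explicit scalar functional of $u_0$ and $H$; \emph{defining} $\alpha(t)$ by \eqref{alpha-rep} so that this scalar vanishes produces the reduced representation \eqref{int-rep}.

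Finally, formula \eqref{alphader-rep} is obtained by differentiating \eqref{alpha-rep} in $t$; the Leibniz boundary term from the upper endpoint $s=t$ vanishes because $e_y(y,0^+)\equiv 0$, since the two shifted Gaussians in $e_y$ both concentrate to $\delta(y)$ at $t=0^+$ and cancel. The main technical subtlety is the $y$-integration by parts on $\widetilde G^I$ in a neighborhood of the singular point $y=0$: although Proposition \ref{prop-greenbounds} exhibits an integrable singularity $a(y)^{-1}(x/y)^\nu$ in $R$ (and hence potentially in $\widetilde G^I$), the distributional construction of $G$ as the unique solution of \eqref{eq:resolker} with the prescribed jump conditions of Section \ref{sec:resolker} already encodes conservation-law compatibility at the singularity, so no spurious interior boundary contribution appears in the integration by parts. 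With these observations the three identities \eqref{int-rep}, \eqref{alpha-rep}, \eqref{alphader-rep} follow by direct algebra.
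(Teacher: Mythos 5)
Your proposal is correct and follows essentially the same route as the paper: Duhamel's principle for the reduced nonlocal $u$-equation, the identity $e^{\cL t}U'=U'$ to convert the $\dot\alpha\,U_x$ forcing into $\alpha(t)U'$, the decomposition $G=E+\widetilde G^I+\widetilde G^{II}$ with integration by parts in $y$ only on the $\widetilde G^I$ piece, and the definition of $\alpha$ as exactly the functional cancelling the $E$-contribution, with \eqref{alphader-rep} obtained by time differentiation using $e_y(\cdot,0^+)=0$. The additional details you supply (verification of $\cL U'=0$ and the harmlessness of the integrable singularity at $y=0$) are consistent with, and slightly more explicit than, the paper's own argument.
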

\medskip

\begin{proof} 
By Duhamel's principle and the fact that
\begin{equation*}
	\int_{-\infty}^{+\infty}G(x,t;y) U'(y)dy = e^{\cL t} U'(x) = U'(x),
\end{equation*}
we obtain
\begin{equation*}
	\begin{aligned}
		u(x,t)=& \int_{-\infty}^{+\infty} G(x,t;y)u_0(y)dy \\
		&+ \int_0^t \int_{-\infty}^{+\infty}G(x,t-s;y)
			\Big(N_1(u) - L\,\cK\,\partial_y  N_2(u) + \dot\alpha\,u\Big)_y(y,s)\, dy\, ds\\
		&+\alpha(t)\,U'.
	\end{aligned}
\end{equation*}
Thus, by defining the {\it instantaneous shock location:}
\begin{equation*}
	\begin{aligned}
		\alpha(t)=& -\int_{-\infty}^{+\infty}e(y,t)u_0(y)dy \\
		&+ \int_0^t \int_{-\infty}^{+\infty}e_{y}(y,t-s)
			\Big(N_1(u) - L\,\cK\,\partial_y  N_2(u) + \dot\alpha\,u\Big)(y,s)\, dy\, ds
	\end{aligned}
\end{equation*}
and using the Green function decomposition \eqref{Greendecomp}, we
easily obtain the integral representation as claimed in the lemma.
\end{proof}
\medskip

With these preparations, we are now ready to prove the main theorem,
following the standard stability analysis of \cite{MaZ4,Z3,Z4}:
\medskip

\begin{proof}[Proof of Theorem \ref{theo-main}]
Define 
\begin{equation*}
	\zeta(t):=\sup_{0\le s\le t, 2\le p\le \infty} 
	\Big[|u(s)|_{L^p}(1+s)^{\frac 12(1-1/p)}
	+|\alpha(s)|+|\dot\alpha(s)|(1+s)^{1/2}\Big].
\end{equation*}
We shall prove here that for all $t\ge 0$ for which a solution exists with $\zeta(t)$ 
uniformly bounded by some fixed, sufficiently small constant, there holds
\begin{equation}\label{zeta-est}
	\zeta(t) \le C(|u_0|_{L^1\cap H^s}+\zeta(t)^2) .
\end{equation}
This bound together with continuity of $\zeta(t)$ implies that
\begin{equation}\label{zeta-est1} 
	\zeta(t) \le 2C|u_0|_{L^1\cap H^s}
\end{equation}
for $t\ge0$, provided that $|u_0|_{L^1\cap H^s}< 1/4C^2$. 
This would complete the proof of the bounds as claimed in the theorem, 
and thus give the main theorem.

By standard short-time theory/local well-posedness in $H^s$, and the standard principle 
of continuation, there exists a solution $u\in H^s$ on the open time-interval for which 
$|u|_{H^s}$ remains bounded, and on this interval $\zeta(t)$ is well-defined and continuous. 
Now, let $[0,T)$ be the maximal interval on which $|u|_{H^s}$ remains strictly bounded by 
some fixed, sufficiently small constant $\delta>0$. 
By Proposition \ref{prop-damping}, and the Sobolev embedding inequality 
$|u|_{W^{2,\infty}}\le C|u|_{H^s}$, $s\ge3$, we have
\begin{equation}\label{Hs}
	\begin{aligned}
		|u(t)|_{H^s}^2 &\le Ce^{-\eta t}|u_0|_{H^s}^2
			+ C \int_0^t e^{-\eta(t-\tau)}\Big(|u(\tau)|_{L^2}^2
			+|\dot\alpha(\tau)|^2 \Big)d\tau\\
		&\le C(|u_0|_{H^s}^2+\zeta(t)^2)(1+t)^{-1/2}
	\end{aligned}
\end{equation}
and so the solution continues so long as $\zeta$ remains small, with
bound \eqref{zeta-est1}, yielding existence and the claimed bounds.

Thus, it remains to prove the claim \eqref{zeta-est}. 
First by representation \eqref{int-rep} for $u$, for any $2\le p\le \infty$,
we obtain
\begin{equation*}
	\begin{aligned}
		|u|_{L^p} (t) &\le \Big|\int_{-\infty}^{+\infty} 
			(\widetilde G^I+\widetilde G^{II})(x,t;y)u_0(y)dy \Big|_{L^p}\\
		&+ \int_0^t\Big| \int_{-\infty}^{+\infty} \widetilde G_y^I(x,t-s;y)
			\Big(N_1(u) - L\,\cK\,\partial_y K N_2(u) + \dot\alpha\,u\Big)(y,s)\, dy\Big|_{L^p} ds\\
		&+ \int_0^t \Big|\int_{-\infty}^{+\infty}\widetilde G^{II}(x,t-s;y)
			\Big(N_1(u) - L\,\cK\,\partial_y  N_2(u) +\dot\alpha\,u\Big)_y(y,s)\, dy\Big|_{L^p} ds\\
		&= I_1 + I_2 + I_3,
	\end{aligned}
\end{equation*}
where estimates \eqref{est-tGI} and \eqref{est-tGII} yield
\begin{equation*}
	\begin{aligned}
		I_1&= \Big|\int_{-\infty}^{+\infty} (\widetilde G^I+\widetilde
			G^{II})(x,t;y)u_0(y)dy \Big|_{L^p} \\
		&\le C(1+t)^{-\frac 12(1-1/p)}|u_0|_{L^1} + Ce^{-\eta t}|u_0|_{H^3} \\
		&\le C(1+t)^{-\frac 12(1-1/p)}|u_0|_{L^1\cap H^3},
	\end{aligned}
\end{equation*}
and, with noting that $L\, \cK\,\partial_y $ is bounded from $L^2$ to $L^2$,
\begin{equation*}
\begin{aligned} 
	I_2 &= \int_0^t\Big| \int_{-\infty}^{+\infty}
		\widetilde G_y^I(x,t-s;y) \Big(N_1(u) - L\,\cK\,\partial_y N_2(u)  
		+\dot\alpha\,u\Big)(y,s)\, dy\Big|_{L^p} ds\\
	&\le C \int_0^t(t-s)^{-\frac 12 (1/2-1/p)-1/2}(|u|_{L^\infty} + |\dot\alpha|)|u|_{L^2}(s)ds\\
	&\le C \zeta(t)^2\int_0^t(t-s)^{-\frac 12 (1/2-1/p)-1/2}(1+s)^{-3/4}ds\\
	&\le C \zeta(t)^2(1+t)^{-\frac 12 (1-1/p)},
\end{aligned}
\end{equation*}
and, together with \eqref{Hs}, $s\ge 4$,
\begin{equation*}
	\begin{aligned}
	I_3&= \int_0^t \Big|\int_{-\infty}^{+\infty}\widetilde
		G^{II}(x,t-s;y) \Big(N_1(u) - L\,\cK\,\partial_y  N_2(u)  
		+\dot\alpha\,u\Big)_y(y,s)\, dy\Big|_{L^p} ds\\
	&\le C\int_0^t e^{-\eta (t-s)} |N_1(u) - L\,\cK\,\partial_y  N_2(u) 
		+\dot\alpha\,u|_{H^4}(s)ds\\
	&\le C\int_0^t e^{-\eta (t-s)}
		(|u|_{H^s} + |\dot\alpha|)|u|_{H^s}(s)ds\\&\le C(|u_0|_{H^s}^2
		+\zeta(t)^2)\int_0^t e^{-\eta (t-s)} (1+s)^{-1}ds\\
	&\le C(|u_0|_{H^s}^2 +\zeta(t)^2)(1+t)^{-1}.
	\end{aligned}
\end{equation*}
Thus, we have proved
\begin{equation*}
	|u(t)|_{L^p}(1+t)^{\frac 12(1-1/p)}\le C(|u_0|_{L^1\cap H^s} +\zeta(t)^2).
\end{equation*}
Similarly, using representations \eqref{alpha-rep} and \eqref{alphader-rep} and the 
estimates in Lemma \ref{lem-kernel-e} on the kernel $e(y,t)$, we can estimate 
(see, e.g., \cite{MaZ4,Z4}),
\begin{equation*}
	|\dot\alpha(t)|(1+t)^{1/2} + |\alpha(t)|\le C(|u_0|_{L^1} +\zeta(t)^2).
\end{equation*}
This completes the proof of the claim \eqref{zeta-est}, and thus the
result for $u$ as claimed. 
To prove the result for $q$, we observe that $\cK\,\partial_x$ is bounded 
from $L^p\to W^{1,p}$ for all $1\le p\le \infty$, and thus from the representation 
\eqref{int-rep} for $q$, we estimate
\begin{equation*}
	\begin{aligned} 
		|q|_{W^{1,p}}(t)&\le C(|N_2(u)|_{L^p}+ |u|_{L^p})(t) \\
		&\le C|u|_{L^p}(t)\le C|u_0|_{L^1\cap H^s}(1+t)^{-\frac12(1-1/p)}
	\end{aligned}
\end{equation*} 
and
\begin{equation*}
		|q|_{H^{s+1}}(t)\le C|u|_{H^s}(t)\le C|u_0|_{L^1\cap H^s}(1+t)^{-1/4},
\end{equation*} 
which complete the proof of the main theorem. 
\end{proof}

\appendix
\section{Pointwise reduction lemma}\label{firstAppendix}
Let us consider the situation of a system of equations of form
\begin{equation} \label{eq:firstorder} 
	W_x = \A^\epsilon(x,\lambda)W,
\end{equation}
for which the coefficient $\A^\epsilon$ does not exhibit uniform
exponential decay to its asymptotic limits, but instead is {\it
slowly varying} (uniformly on a $\epsilon$-neighborhood $\cV$, being
$\epsilon>0$ a parameter). 
This case occurs in different contexts for rescaled equations, such 
as \eqref{eq:strechtedsyst} in the present analysis.

In this situation, it frequently occurs that not only $\A^\epsilon$
but also certain of its invariant eigenspaces are slowly varying
with $x$, i.e., there exist matrices
\begin{equation*}
	{\mathbb{L}}^\epsilon=\begin{pmatrix} L^\epsilon_1 \\
	L^\epsilon_2\end{pmatrix}(x), \quad \R^\epsilon=\begin{pmatrix}
	R^\epsilon_1 & R^\epsilon_2\end{pmatrix} (x) \label{LR}
\end{equation*}
for which ${\mathbb{L}}^\epsilon \R^\epsilon(x)\equiv I$ and
$|{\mathbb{L}}\R'|=|{\mathbb{L}}'\R|\le C\delta^\epsilon(x)$, uniformly in $\epsilon$,
where the pointwise error bound $\delta^\epsilon =
\delta^\epsilon(x)$ is small, relative to
\begin{equation}\label{M}
	{\mathbb{M}}^\epsilon:= {\mathbb{L}}^\epsilon \A^\epsilon \R^\epsilon(x)
	=\begin{pmatrix} M^\epsilon_1 & 0 \\
		0 & M^\epsilon_2 \end{pmatrix}(x)
\end{equation}
and ``$'$'' as usual denotes $\partial/\partial x$.  
In this case, making the change of coordinates $W^\epsilon=\R^\epsilon Z$, we 
may reduce \eqref{eq:firstorder} to the approximately block-diagonal equation
\begin{equation}\label{eq:blockdiag}
	{Z^\epsilon}'= {\mathbb{M}}^\epsilon Z^\epsilon + \delta^\epsilon
	\Theta^\epsilon Z^\epsilon, 
\end{equation}
where ${\mathbb{M}}^\epsilon$ is as in \eqref{M}, $\Theta^\epsilon(x)$ is a
uniformly bounded matrix, and $\delta^\epsilon(x)$ is (relatively) small. 
Assume that such a procedure has been successfully carried
out, and, moreover, that there exists an approximate {\it uniform
spectral gap in numerical range}, in the strong sense that
\begin{equation*} \label{eq:gap}
	\min \sigma(\Real M_1^\epsilon)- \max \sigma(\Real M_2^\epsilon) 
	\ge \eta^\epsilon(x), \qquad \textrm{\rm for all } x,
\end{equation*}
with pointwise gap $\eta^\epsilon(x) > \eta_0 > 0$ 
uniformly bounded in $x$ and in $\epsilon$; here and elsewhere 
$\Real N:= \half (N+N^*)$ denotes the ``real'', or symmetric part of an operator $N$. 
Then, there holds the following {\it pointwise reduction lemma}, a refinement of 
the reduction lemma of \cite{MaZ3} (see the related ``tracking lemma'' given in 
varying degrees of generality in \cite{GZ,MaZ1,PZ,ZH,Z3}).
\medskip

\begin{proposition}\label{pwrl} 
Consider a system \eqref{eq:blockdiag} under the gap
assumption \eqref{eq:gap}, with $\Theta^\epsilon$ uniformly bounded
in $\epsilon \in \cV$ and for all $x$.  If, for all $\epsilon \in
\cV$, $\sup_{x \in \R} (\delta^\epsilon/\eta^\epsilon)$ is
sufficiently small (i.e., the ratio of pointwise gap
$\eta^\epsilon(x)$ and pointwise error bound $\delta^\epsilon(x)$ is
uniformly small), then there exist (unique) linear transformations
$\Phi_1^\epsilon(x,\lambda)$ and $\Phi_2^\epsilon(x,\lambda)$,
possessing the same regularity with respect to the various
parameters $\epsilon$, $x$, $\lambda$ as do coefficients
${\mathbb{M}}^\epsilon$ and $\delta^\epsilon(x)\Theta^\epsilon(x)$, for which
the graphs $\{(Z_1, \Phi^\epsilon_2 (Z_1))\}$ and
$\{(\Phi^\epsilon_1(Z_2),Z_2)\}$ are invariant under the flow of
\eqref{eq:blockdiag}, and satisfying
\begin{equation*}
	\sup_\R |\Phi^\epsilon_j| \, \le \, C \sup_\R (\delta^\epsilon/\eta^\epsilon).
\end{equation*}
Moreover, we have the pointwise bounds
\begin{equation}\label{ptwise}
	|\Phi^\epsilon_2(x)|\le C \int_{-\infty}^x e^{-\int_y^x
	\eta^\epsilon(z)dz} \delta^\epsilon(y) dy,
\end{equation}
and symmetrically for $\Phi^\epsilon_1$.
\end{proposition}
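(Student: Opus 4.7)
\medskip

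The plan is to reduce the problem to a matrix Riccati equation for $\Phi_2^\epsilon$ (and symmetrically for $\Phi_1^\epsilon$), convert it to an integral equation using the evolution operators of the block factors $M_1^\epsilon, M_2^\epsilon$, and solve by contraction mapping in a weighted pointwise norm chosen so that the spectral gap furnishes the decay rate in \eqref{ptwise}. Writing $Z = (Z_1, Z_2)^\top$ in block form and partitioning $\Theta^\epsilon$ accordingly as $(\Theta_{ij}^\epsilon)$, invariance of the graph $\{(Z_1, \Phi_2^\epsilon Z_1)\}$ under \eqref{eq:blockdiag} is equivalent to the Sylvester--Riccati equation
\begin{equation*}
\Phi_2' = M_2^\epsilon \Phi_2 - \Phi_2 M_1^\epsilon + \delta^\epsilon\bigl(\Theta_{21} + \Theta_{22}\Phi_2 - \Phi_2\Theta_{11} - \Phi_2\Theta_{12}\Phi_2\bigr),
\end{equation*}
where I have suppressed the superscript $\epsilon$ on $\Theta$ for readability. (The analogous backward equation governs $\Phi_1^\epsilon$.)

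Next, let $\mathcal{U}_j^\epsilon(x,y)$ denote the propagator of $\dot u = M_j^\epsilon u$. Standard numerical-range estimates give $|\mathcal{U}_2^\epsilon(x,y)| \le \exp\bigl(\int_y^x \max\sigma(\Real M_2^\epsilon)\,dz\bigr)$ and $|\mathcal{U}_1^\epsilon(y,x)| \le \exp\bigl(-\int_y^x \min\sigma(\Real M_1^\epsilon)\,dz\bigr)$ for $y\le x$, which combined with the gap assumption \eqref{eq:gap} yield the crucial kernel bound
\begin{equation*}
\bigl|\mathcal{U}_2^\epsilon(x,y)\, V\, \mathcal{U}_1^\epsilon(y,x)\bigr| \le |V|\, e^{-\int_y^x \eta^\epsilon(z)\,dz},\qquad y\le x.
\end{equation*}
Treating the Riccati equation as a linear inhomogeneous Sylvester equation with right-hand side $F[\Phi_2]$ equal to the bracketed term above, and imposing the boundary condition $\Phi_2(-\infty)=0$ (which selects the unique decaying invariant graph), Duhamel's formula gives the fixed-point problem
\begin{equation*}
(T\Phi_2)(x) := \int_{-\infty}^x \mathcal{U}_2^\epsilon(x,y)\, F[\Phi_2](y)\, \mathcal{U}_1^\epsilon(y,x)\,dy.
\end{equation*}

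I would then set up the contraction in the ball
\begin{equation*}
\mathcal{B}_K := \Bigl\{\Phi \in C(\R): |\Phi(x)| \le K\int_{-\infty}^x e^{-\int_y^x \eta^\epsilon(z)\,dz}\delta^\epsilon(y)\,dy\Bigr\},
\end{equation*}
using uniform boundedness of $\Theta^\epsilon$ to see that the linear part of $F$ contributes a term of order $\sup_\R(\delta^\epsilon/\eta^\epsilon)\cdot|\Phi|$ after applying the kernel bound and carrying out the $y$-integration via the gap weight, and the quadratic part contributes a term quadratic in this small quantity. Hence for $\sup(\delta^\epsilon/\eta^\epsilon)$ sufficiently small, $T$ maps $\mathcal{B}_K$ into itself and is a contraction; the unique fixed point is the desired $\Phi_2^\epsilon$ and automatically satisfies \eqref{ptwise}. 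The analogous argument with integration from $+\infty$ and the roles of $M_1, M_2$ swapped produces $\Phi_1^\epsilon$. Regularity in $(\epsilon, x, \lambda)$ follows from the fact that the contraction argument takes place in a space whose norm and kernel depend smoothly (resp.\ analytically) on these parameters, inherited through the propagators $\mathcal{U}_j^\epsilon$.

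The main obstacle is not the contraction mapping itself but securing the \emph{pointwise} weight in the fixed-point space: one must verify that $T$ preserves $\mathcal{B}_K$, which requires the inequality
\begin{equation*}
\int_{-\infty}^x e^{-\int_y^x \eta^\epsilon\,dz}\delta^\epsilon(y)\Bigl[\int_{-\infty}^y e^{-\int_{y'}^y \eta^\epsilon\,dz}\delta^\epsilon(y')\,dy'\Bigr]dy \le C\sup_\R(\delta^\epsilon/\eta^\epsilon) \int_{-\infty}^x e^{-\int_y^x \eta^\epsilon\,dz}\delta^\epsilon(y)\,dy,
\end{equation*}
which follows by Fubini after bounding the inner factor by $\sup(\delta^\epsilon/\eta^\epsilon)$ times $(1 - e^{-\int\eta})$; the quadratic term is handled similarly. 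Once this pointwise absorption is verified, uniqueness, regularity, and the sup-norm bound $\sup|\Phi_j^\epsilon| \le C\sup(\delta^\epsilon/\eta^\epsilon)$ claimed in the proposition all drop out of the contraction argument.
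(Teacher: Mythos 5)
Your proposal is correct and follows essentially the same route as the paper: the paper's proof defers to the tracking/reduction lemma of Mascia--Zumbrun, which is exactly this Riccati fixed-point equation $\Phi_2^\epsilon=\mathcal{T}\Phi_2^\epsilon=\int_{-\infty}^x\cF^{y\to x}\delta^\epsilon(y)Q(\Phi_2^\epsilon)(y)\,dy$ with the Sylvester-flow kernel satisfying $|\cF^{y\to x}|\le Ce^{-\int_y^x\eta^\epsilon(z)\,dz}$, and your propagator estimates, weighted ball $\mathcal{B}_K$, and absorption inequality reproduce that contraction argument in detail. The only (cosmetic) difference is that the paper first rescales the independent variable to make $\eta^\epsilon$ constant and cites the constant-gap result before switching back, whereas you carry the variable gap directly through the weighted norm; both are valid.
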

\medskip

\begin{proof}
By a change of independent coordinates, we may arrange that $\eta^\epsilon(x)\equiv$ 
constant, whereupon the first assertion reduces to the conclusion of the tracking/reduction 
lemma of \cite{MaZ3}.  
Recall that this conclusion was obtained by seeking $\Phi^\epsilon_2$ as the solution 
of a fixed-point equation
\begin{equation*}
	\Phi^\epsilon_2(x)= {\mathcal T }\Phi^\epsilon_2(x):= \int_{-\infty}^x
	\cF^{y\to x} \delta^\epsilon(y) Q(\Phi^\epsilon_2)(y) dy.
\end{equation*}
Observe that in the present context we have allowed $\delta^\epsilon$ to vary with $x$, 
but otherwise follow the proof of \cite{MaZ3} word for word to obtain the conclusion 
(see Appendix C of \cite{MaZ3}, proof of Proposition 3.9). 
Here, $Q(\Phi^\epsilon_2)=\cO(1+|\Phi^\epsilon_2|^2)$ by construction, and
$|\cF^{y\to x}|\le Ce^{-\eta(x-y)}$. 
Thus, using only the fact that $|\Phi^\epsilon_2|$ is bounded, we obtain the 
bound \eqref{ptwise} as claimed, in the new coordinates for which $\eta^\epsilon$ 
is constant.  
Switching back to the old coordinates, we have instead 
$|\cF^{y\to x}|\le Ce^{-\int_y^x \eta^\epsilon(z)dz}$, yielding the result in the general case.
\end{proof}
\medskip

\begin{remark}\label{rem:reduced}\rm
From Proposition \ref{pwrl}, we obtain reduced flows
\begin{equation*}
	\left\{\begin{aligned}
		{Z_1^\epsilon}' &= M_1^\epsilon Z_1^\epsilon + \delta^\epsilon(
			\Theta_{11} + \Theta_{12}^\epsilon \Phi_2^\epsilon) Z_1^\epsilon,\\
		{Z_2^\epsilon}' &= M_2^\epsilon Z_2^\epsilon +
			\delta^\epsilon(\Theta_{22}+ \Theta_{21}^\epsilon \Phi_1^\epsilon)
			Z_2^\epsilon.
	\end{aligned}\right.
\end{equation*}
on the two invariant manifolds described.
\end{remark}

\section{Spectral stability}\label{appx-condD} 

Consider the eigenvalue system \eqref{spectralsyst}. 
Integrating the equations we find the zero-mass conditions for $u$ and $q$,
\begin{equation*}
	\int_\R u \, dx = 0, \qquad \int_\R q \, dx = 0,
\end{equation*}
which allows us to recast system \eqref{spectralsyst} in terms of the integrated 
coordinates, which we denote, again, as $u$ and $q$. 
The result is
\begin{equation}\label{evalueeq}
    \begin{aligned}
    \lambda u+ a(x)\,u'  +L q'&=0,\\ - q''+  q +b(x)\,u' &=0.
    \end{aligned}
\end{equation}
The following proposition is the main result of this section.
\medskip

\begin{proposition}\label{prop-spectralcond} 
Let $(u,q)$ be a bounded solution of \eqref{evalueeq}, corresponding to a 
complex number $\lambda\ne 0$.
Then $\Real\lambda <0$ provided that at least one of the following conditions holds\\
(i) $b$ is a constant;\\
(ii) $|u_+ - u_-|$ is sufficiently small. 
\end{proposition}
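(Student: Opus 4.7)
The plan is to derive a single weighted energy identity from the integrated eigenvalue system \eqref{evalueeq} and read off the sign of $\Real\lambda$ from the sign conditions granted by \eqref{A1} and \eqref{A4}. Since $\{\Real\lambda\ge 0\}\setminus\{0\}$ is contained in the consistent splitting set $\Lambda$ of Remark \ref{rem:notconsistent}, a bounded solution of \eqref{evalueeq} for such $\lambda$ automatically decays exponentially at $\pm\infty$, so all boundary terms produced below vanish.

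First I would multiply \eqref{evalueeq}$_1$ by $\bar u$, integrate over $\R$, and use $\Real\int a u'\bar u\,dx = -\half\int a'|u|^2\,dx$ together with $\int q'\bar u\,dx = -\int q\,\bar{u'}\,dx$; then substitute $\bar{u'} = (\bar q'' - \bar q)/b$ from \eqref{evalueeq}$_2$ (legitimate since $b\ne 0$ by \eqref{A4}) and integrate by parts once more in the resulting $\int q\,\bar q''/b\,dx$. The outcome should be the identity
\begin{equation*}
\Real\lambda\,|u|_{L^2}^2 \;=\; \half\int a'\,|u|^2\,dx \;-\; L\!\int \frac{|q'|^2+|q|^2}{b}\,dx \;+\; \frac{L}{2}\!\int (1/b)''\,|q|^2\,dx.
\end{equation*}
Genuine nonlinearity \eqref{A1} together with monotonicity of $U$ (Lemma \ref{monoU}) gives $a'<0$ everywhere, and \eqref{A4} gives $L/b>0$; hence the first two right-hand terms are $\le 0$, strictly negative for nontrivial $(u,q)$. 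Only the third ``commutator'' term can have indefinite sign.

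In case (i), $b$ is constant so $(1/b)''\equiv 0$ and $\Real\lambda\,|u|_{L^2}^2 \le 0$ follows at once; the alternative $u\equiv 0$ is excluded because then \eqref{evalueeq} would force $Lq'\equiv 0$ and subsequently $q\equiv 0$, contradicting nontriviality, so $\Real\lambda<0$. In case (ii) I would bound the remainder crudely by $\tfrac{|L|}{2}\sup_x|(1/b)''(x)|\,|q|_{L^2}^2$; since $b(x)=(dM/du)(U(x))$, the quantity $(1/b)''$ is a smooth combination of $U',(U')^2,U''$ with bounded coefficients, and the standard small-amplitude rescaling of the profile ODE (consistent with Lemma \ref{lem-expdecay}) yields uniform pointwise bounds $|U'|,|U''|\le C|u_+-u_-|$. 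For $|u_+-u_-|$ small enough $\sup|(1/b)''|$ is then strictly smaller than $\inf(2/b)$, so the remainder is absorbed into the negative term $-L\!\int|q|^2/b\,dx$, preserving $\Real\lambda<0$ for nontrivial bounded solutions.

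The hardest part I anticipate is establishing the uniform pointwise (as opposed to integrated) smallness of $U'$ and $U''$ in the weak-shock regime: Lemma \ref{lem-expdecay} alone gives exponential decay at $\pm\infty$ but not global amplitude control, so a separate small-amplitude rescaling argument for the profile ODE near the sonic point $x=0$ is required. Everything else is routine energy bookkeeping.
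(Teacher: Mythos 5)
Your proposal is correct and follows essentially the same strategy as the paper: a weighted $L^2$ energy estimate on the integrated eigenvalue system, using the elliptic equation to eliminate $u'$ so that the coupling term becomes the negative-definite quantity $-L\int(|q'|^2+|q|^2)/b$ plus a commutator that vanishes for constant $b$ and is small for weak shocks, while $\Real\lambda|u|^2$ is controlled by the strictly negative transport term coming from $a'<0$. The only substantive difference is the choice of multiplier. You pair the first equation with $\bar u$ and then divide by $b$, which produces the term $\tfrac{L}{2}\int(1/b)''|q|^2$ and therefore forces you to control $\sup_x(|U''|+|U'|^2)$ uniformly in the small-amplitude regime --- exactly the point you flag as the hardest step, and one the paper never needs to address. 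The paper instead pairs the first equation with $b\,\bar u$, so that the substitution $(bu)'=q''-q+b'u$ brings in only \emph{first} derivatives of $b$: the error terms are $\iprod{a\,b'\,u,u}$ and $\iprod{L\,b'\,q,u}$, both of which are absorbed using only $|b'|,|a|=\cO(|u_+-u_-|)$ and $|a'|\sim|U'|$, i.e.\ facts already invoked in Remark \ref{remarka5k}. So your argument is sound modulo the $U''$ bound (which is true but requires a separate profile estimate near the sonic point), whereas the weight $b$ makes that entire issue disappear; if you keep your multiplier, you should either supply the uniform $C^2$ smallness of the profile or switch to the paper's weight.
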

\medskip

\begin{proof} 
In any case, we can assume $b>0$ by redefining $q$ by $-q$ if necessary, 
still preserving the condition $Lb>0$.
Taking the real part of the inner product of the first equation
against $b\,\bar u$ and using integration by parts, we obtain
\begin{equation*}
	\begin{aligned}
	\Real\lambda |b^{1/2}u|_{L^2}^2 &= -\Real \iprod{a\,b\,u',u} - \Real\iprod{Lq',gu} \\
	&= \Real \Big(\iprod{(a\,b)' u,u} + \iprod{Lq,(b\,u)'}\Big)\\
	&= \Real \Big(\iprod{(a\,b)' u,u} + \iprod{Lq,q''-q + b'\,u}\Big)\\
	&= \Real\Big( \iprod{a'\,b\,u,u} - \iprod{L\,q',q'}-\iprod{Lq,q} 
		+\iprod{a\,b'\,u,u}+\iprod{L\,b'\,q,u}\Big)\\
	&\le \iprod{a'\,b\,u,u} -\frac L2|q|_{H^1}^2+ C\iprod{(|a|+|b'|)|b'|u,u},
 \end{aligned}
\end{equation*}
which proves the proposition in the first case, noting  $a' =\dfrac{d^2 f}{du^2}(U)\,U'<0$ 
(by monotonicity of the profile)  and $b\ge \theta>0$. 
For the second case, observe that $|a|+|b'|$ is now sufficiently small and $|b'|$ and $|a'|$ 
have the same order of ``smallness'', that is, of order $\cO(|U'|) = \cO(|u_+-u_-|)$. 
Thus, the last term on the right-hand side of the above estimate can be absorbed into 
the first term, yielding the result for this second case as well.
\end{proof}

\section{Monotonicity of profiles under nonlinear coupling}
\label{app:exmon}

In this Appendix we show that radiative scalar shock profiles for general nonlinear coupling are monotone, a feature which plays a key role in our stability analysis. Although the existence of profiles for nonlinear coupling is already addressed in \cite{LMS2}, and the monotonicity for the linear coupling case is discussed in \cite{Ser7,LMS2}, for completeness (and convenience of the reader) we closely review the (scalar) existence proof of \cite{LMS1} and extend it to the nonlinear coupling case, a procedure which leads to monotonicity in a very simple way.

The main observation of this section is precisely that, thanks to assumptions  \eqref{A0} and \eqref{A4}, the mapping $u \mapsto LM(u)$ is a diffeomorphism on its range \cite{LMS2}, which can be regarded as the identity along the arguments of the proof leading to the existence result of \cite{LMS1}. 
Since $LM$ is monotone increasing in $[u_+,u_-]$, setting $M_\pm:=M(u_\pm)$,
there exists an inverse function  $H : [LM_+,LM_-] \to [u_+,u_-]$ such that
\begin{equation*}
	y = LM(u) \iff u = H(y),
\end{equation*}
for each $u \in [u_+,u_-]$ and with derivative
\begin{equation*}
	\frac{d H}{dy} = \left(L\,\frac{dM}{du}(H(y))\right)^{-1} \, > \, 0.
\end{equation*}

Consider once again the stationary profile equations \eqref{eq:profileeqn}
(after appropriate flux normalizations), with $(U,Q)(\pm\infty) = (u_\pm,0)$. 
Integration of the equation for $Q$ leads to $\int_\R Q = -[M]=M_--M_+$. 
Let us introduce the variable $Z$ as
\begin{equation*}
	Z:= -L\int_{-\infty}^x Q(\xi) \, d\xi + Lb_-,
\end{equation*}
such that $Z' = -LQ$ and $Z \to LM_\pm$ as $x \to +\infty$. 
In terms of the new variable $Z$ the profile equations are
\begin{align*}
	Z'' &= f(U)' ,\\ Z'- Z''' &= LM(u)'.
\end{align*}
Integrating las equations, and using the asymptotic limits for $Z$, we arrive at the system
\begin{equation}\label{newsystforU}
	\begin{aligned}
		Z' &= f(U) - f(u_\pm),\\
		Z - Z'' &= LM(u).
	\end{aligned}
\end{equation}
We can thus rewrite the ODE for $Z$ as
\begin{equation}\label{our23}
	Z' = F(H(Z-Z'')),
\end{equation}
where $F(u) := f(u) - f(u_\pm)$. 
In view of strict convexity of $f$, the function $F$ is strictly decreasing in the interval 
$[u_+,u_*]$ and strictly increasing in $[u_*,u_-]$, with $F(u_\pm) = 0$ and $F(u_*) = -m < 0$. 
Hence, $F$ is invertible in those intervals with corresponding inverse functions $h_\pm$, 
and we look at the solutions to two ODEs, namely,
\begin{equation}\label{newtwoODEs}
	\begin{aligned}
		Z'' = Z - LM(h_\pm(Z')),\\
		Z(\pm\infty) = L\,M_\pm, \quad Z'(\pm\infty) = 0,
	\end{aligned}
\end{equation}
in their corresponding intervals of existence. 
Observe that the derivatives of the functions $h_\pm$ are given by $h_\pm' = 1/f'(h_\pm(\cdot))$, 
with $f'(u) \neq 0$ in $[u_+,u_*) \cup (u_*,u_-]$. 
Note that $h_+ : [-m,0] \to [u_+,u_*]$ and $h_- : [-m,0] \to [u_*,u_-]$, and that $h_+$ ($h_-$) 
is monotonically decreasing (increasing) on its domain of definition.

Following \cite{LMS1} closely, we shall exhibit the existence of a $Z$-profile solution to \eqref{our23} 
between the states $LM_- > LM_+$, for which the velocity profile follows by $U = H(Z-Z'')$ 
(see \eqref{newsystforU}). 
In the sequel we only indicate the differences with the proofs in Section 2 of \cite{LMS1}, 
and pay particular attention to the monotonicity properties of $Z$, which leads to the monotonicity 
of $U$ in Lemma \ref{monoU} below.

The following proposition is an extension of Propositions 2.2 and 2.3 
in \cite{LMS1} to the variable $G'$ case.
\medskip

\begin{proposition}[\cite{LMS1}]\label{propour23}
(i) Denote $Z_+ = Z_+(x)$ the (unique up to translations) maximal solution to
\begin{equation*}
	Z'' = Z - LM(h_+(Z')),
\end{equation*}
with conditions $Z(+\infty) = Lb_+$ and $Z'(+\infty) = 0$. 
Then $Z_+$ is monotone increasing, ${Z'_+}$ is monotone decreasing, and $Z_+$ 
is not globally defined, that is, there exists a point that we can take without loss of 
generality as $x=0$ (because of translation invariance) such that
\begin{equation*}
	{Z_+}(0) - {Z''_+}(0) = LM(u_*),\qquad
	{Z'_+}(0) = -m < 0.
\end{equation*}
(ii) Denote $Z_- = Z_-(x)$ the (unique up to translations) maximal solution to
\begin{equation*}
	Z'' = Z - LM(h_-(Z')),
\end{equation*}
with conditions $Z(-\infty) = Lb_-$ and $Z'(-\infty) = 0$. 
Then $Z_-$ and $Z'_-$ are monotone increasing, and $Z_-$ is not globally defined, 
that is, there exists a point that we can take without loss of generality as $x=0$ 
(because of translation invariance) such that
\begin{equation*}
	{Z_-}(0) - {Z''_-}(0) = LM(u_*),\qquad
	{Z'_-}(0) = -m < 0.
\end{equation*}
\end{proposition}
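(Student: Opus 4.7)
The plan is to extend the phase-plane argument of \cite{LMS1} to the nonlinear-coupling case, using crucially the diffeomorphism $u\mapsto LM(u)$ with inverse $H$ introduced above. I would begin by setting $W:=Z'$ and rewriting \eqref{newtwoODEs} as the planar system
\begin{equation*}
	Z' = W, \qquad W' = Z - LM(h_\pm(W)),
\end{equation*}
posed on the strip $W\in[-m,0]$ on which the two single-valued inverse branches $h_\pm$ of $F$ are defined. The only equilibria are $(LM_+,0)$ and $(LM_-,0)$, and since $h_\pm'(0)=1/a_\pm$ and $LM'(u_\pm)=Lb_\pm$, the Jacobian at each reads
\begin{equation*}
	J_\pm = \begin{pmatrix} 0 & 1 \\ 1 & -Lb_\pm/a_\pm \end{pmatrix},
\end{equation*}
with $\det J_\pm=-1$, so both are hyperbolic saddles with one-dimensional stable and unstable manifolds.

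Next, I would apply the stable/unstable manifold theorem to obtain, up to $x$-translation, the unique trajectory $Z_+$ approaching $(LM_+,0)$ along its stable manifold as $x\to+\infty$, and the unique trajectory $Z_-$ emanating from $(LM_-,0)$ along its unstable manifold as $x\to-\infty$. The correct branch of each local manifold is selected by requiring that $W$ enter the interior of the strip, a condition fixed by the sign of the $W$-component of the appropriate eigenvector of $J_\pm$. By uniqueness for the planar flow, $W$ cannot vanish away from the equilibria, so its sign is preserved along the entire trajectory, which yields the strict monotonicity of $Z_\pm$ in the direction away from the equilibrium. The corresponding monotonicity of $W=Z'_\pm$ is then obtained from a sign analysis of $W'=Z-LM(h_\pm(W))$ along the trajectory, following \cite{LMS1} step by step; this is the point where the nonlinearity of $M$ enters the argument, and it goes through unchanged precisely because $LM\circ H=\mathrm{id}$ makes $LM$ a monotone diffeomorphism on $[u_+,u_-]$, preserving all the comparisons used in the linear-$M$ case.

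Finally, I would show that neither trajectory is globally defined. Since $W$ is strictly monotone and the interior of the strip contains no equilibrium, $W$ must tend to one of the boundary lines $\{W=0\}$ or $\{W=-m\}$. Exit through $\{W=0\}$ off the equilibria is ruled out by uniqueness, so $W$ must reach $\{W=-m\}$. A quantitative lower bound $|W'|\ge c>0$ in a one-sided neighborhood of that line, obtained from the fact that $Z$ stays away from $LM(u_*)=LM(h_\pm(-m))$ strictly before the exit, shows that the exit occurs at a finite value of $x$, which is normalized to $x=0$ by translation. At this exit point $h_\pm(-m)=u_*$, which combined with \eqref{newsystforU} immediately yields $Z_\pm(0)-Z''_\pm(0)=LM(u_*)$ and $Z'_\pm(0)=-m$.

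The main obstacle is this last step: to ensure that $W$ actually attains the value $-m$ in finite $x$, rather than asymptoting to it. Handling this rigorously requires the quantitative estimate on $|W'|$ near the exit line, which in turn rests on the monotonicity developed in the previous paragraph and on the uniform control of $Z$ along the trajectory. Once this is in place, the remainder is bookkeeping transported from \cite{LMS1}, the new input being only the observation that hypothesis \eqref{A4} promotes $LM$ to a monotone diffeomorphism on $[u_+,u_-]$, which is what makes the scalar construction survive the replacement of the identity by $LM\circ H$.
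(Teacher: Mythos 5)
Your proposal follows the same route as the paper's proof: recast \eqref{newtwoODEs} as a planar system on the strip $Z'\in[-m,0]$, identify $(LM_\pm,0)$ as hyperbolic saddles with exactly the Jacobian you display, select the separatrix entering the interior of the strip, establish monotonicity of $Z_\pm$ and $Z'_\pm$, and show the trajectory exits through $\{Z'=-m\}$ at a finite $x$ normalized to $x=0$. The observation that \eqref{A4} promotes $LM$ to a monotone diffeomorphism, so that the scalar construction of \cite{LMS1} transfers, is precisely the point the paper emphasizes.

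There is, however, one incorrectly justified step. You twice invoke ``uniqueness for the planar flow'' --- first to conclude that $W=Z'$ cannot vanish away from the equilibria, and later to rule out exit through $\{W=0\}$. Uniqueness only forbids the trajectory from reaching an equilibrium in finite time; an orbit of a planar autonomous system can perfectly well cross the line $W=0$ at a non-equilibrium point (think of any center or spiral). What actually excludes this is a transversality argument on that line: at a point $x_0$ with $Z'(x_0)=0$ one has $Z''(x_0)=Z(x_0)-LM(h_\pm(0))=Z(x_0)-LM_\pm$, and a first-crossing argument (using that $Z$ is strictly monotone between $x_0$ and the equilibrium, with limit $LM_\pm$) shows this quantity has a definite sign incompatible with $Z$ having an interior extremum at $x_0$. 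This local-extremum contradiction is exactly how the paper argues, and your proof needs it in place of the appeal to uniqueness. A second, smaller remark: your finite-exit step rests on a claimed lower bound $|W'|\ge c>0$ near $\{W=-m\}$, obtained from ``$Z$ staying away from $LM(u_*)$,'' which you do not establish; the paper instead first checks $Z''=Z+\mathcal{O}(1)$ to exclude finite-time blow-up and then defers to the argument of \cite{LMS1} (in essence: a globally defined solution would have $Z'=W$ bounded away from $0$, hence $Z$ unbounded, hence $W'$ unbounded, contradicting $W\in[-m,0]$). Since you also defer that portion to \cite{LMS1}, this is a difference of sketch rather than substance, but the lower bound as you state it is not the argument that closes the step.
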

\medskip

\begin{proof}
We focus on part (i) of the Proposition.  The second part is analogous. 
Rewrite the equation for $Z_+$ as $X' = J_+(X)$ with $X = (Z,Z')^\top$ and
\begin{equation*}
	J_+ (X) = \begin{pmatrix}Z' \\ Z - LM(h_+(Z')) \end{pmatrix},
\end{equation*}
for which
\begin{equation*}
	{\nabla J_+}_{|(LM_+, 0)} 
	= \begin{pmatrix} 0 & 1 \\ 1 & \dfrac{-L}{f'(u_+)}\,\dfrac{dM}{du}(u_+)
	\end{pmatrix},
\end{equation*}
in view of $h_+(0) = u_+$, and therefore, the starting point $(LM_+,0)$ of the trajectory is a saddle point. 
We focus on the stable manifold as we need $Z$ to be decreasing. 
Follow the trajectory that exits from $(LM_+,0)$ in the lower half plane of the phase field $(Z,Z')$. 
We claim that $Z$ is strictly monotone decreasing and $Z'$ is strictly monotone increasing. 
Suppose, by contradiction, that $Z$ attains a local maximum at $x_0 \in \R$. 
Then $Z'(x_0) = 0$ and $0 \geq Z''(x_0) = (Z - LM(h_+(Z'))_{x=x_0} = Z(x_0) - LM_+$, which is false. 
Hence, $Z$ is monotone decreasing and $Z' < 0$. 
Now, assume that $Z'$ attains a local minimum at $x=x_0$. 
Then the trajectory $Z' = \varphi(Z)$ in the phase plane must attain a local minimum at the same point, 
yielding $\varphi'(Z) = 0$ and $\varphi''(Z) \geq 0$. 
Thus, at $x = x_0$,
\begin{equation*}
	0=\varphi'(Z) = Z'' / Z' = (Z - LM(h_+(Z')))/Z'
\end{equation*}
and
\begin{equation*}
	\begin{aligned}
		\varphi''(Z) &= (d/dZ)((Z - LM(h_+(Z')))/Z') \\
		&= 1/Z' - (dZ'/dZ) \left( (Z' + LM'(h_+(Z'))h'_+(Z') Z' - LM(h_+(Z')))/(Z')^2\right).
	\end{aligned}
\end{equation*}
But $(dZ'/dZ) = \varphi'(Z) = 0$ at $x = x_0$, thus $\varphi''(Z) = 1/Z' < 0$, 
which is a contradiction. 
This shows that $Z'$ is strictly monotone increasing with $Z'' > 0$, and clearly 
$LM(h_+(Z')) \in [LM_+, LM(u_*)]$, $h_+(Z') \in [u_+,u_*]$.
This shows that $Z'' = Z + \mathcal{O}(1)$ and the solution does not blow up in finite time.

By following the proof of Proposition 2.2 in \cite{LMS1} word by word from this point on, 
it is possible to show that the solution reaches the boundary of definition of the differential 
equation at a finite point  which, by translation invariance, we can take as $x = 0$. 
Hence, ${Z'_+}(0) - {Z''_+}(0) = LM(u_*)$ and ${Z'_+}(0) = -m < 0$ hold. 
This concludes the proof.
\end{proof}
\medskip

\begin{lemma}\label{lemour24}
For the maximal solutions $Z_\pm$ of Proposition \ref{propour23}, there holds
\begin{equation*}
	Z_-(0) \leq LM(u_*) \leq Z_+(0).
\end{equation*}
\end{lemma}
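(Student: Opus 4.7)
The plan is to exploit the two boundary identities from Proposition~\ref{propour23} at the endpoint $x=0$, namely $Z_\pm(0) - Z''_\pm(0) = LM(u_*)$, in order to convert the stated inequalities into sign conditions on $Z''_\pm(0)$. Rewriting gives
\begin{equation*}
	Z_+(0) - LM(u_*) = Z''_+(0), \qquad LM(u_*) - Z_-(0) = -Z''_-(0),
\end{equation*}
so the lemma will follow at once from $Z''_+(0) \ge 0$ and $Z''_-(0) \le 0$.

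Next I would extract the sign of $Z''_\pm(0)$ directly from the phase-plane analysis carried out in the proof of Proposition~\ref{propour23}. For $Z_+$, that proof shows that along the stable-manifold trajectory approaching $(LM_+,0)$ as $x\to+\infty$, the component $Z'_+$ is \emph{strictly} monotone on the entire interval $[0,+\infty)$: starting from $Z'_+(0)=-m$ and tending to $0$ at $+\infty$, it must therefore increase strictly, so $Z''_+>0$ throughout, and in particular $Z''_+(0)>0$. The completely symmetric argument for $Z_-$ on $(-\infty,0]$ (the unstable manifold emanating from $(LM_-,0)$, with $Z'_-$ strictly decreasing from $0$ at $-\infty$ down to $-m$ at $x=0$) yields $Z''_-<0$ on $(-\infty,0]$, and in particular $Z''_-(0)<0$. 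Plugging these strict signs back into the rewritten identities gives in fact the strict inequalities $Z_-(0) < LM(u_*) < Z_+(0)$, which implies the non-strict claim of the lemma.

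The main obstacle is verifying rigorously that $Z'_\pm$ are strictly monotone \emph{globally} on their respective trajectories (not just near the saddle equilibria). I would handle this exactly as in Proposition~\ref{propour23}(i): suppose for contradiction that $Z'_+$ had a local minimum at some $x_0\in(0,+\infty)$; expressing the trajectory locally as $Z'=\varphi(Z)$ in the phase plane and computing $\varphi'(Z(x_0))=0$ forces $Z''(x_0)=Z(x_0)-LM(h_+(Z'(x_0)))=0$, while a direct computation then gives $\varphi''(Z(x_0))=1/Z'(x_0)<0$, contradicting the local-minimum hypothesis. The symmetric argument rules out a local maximum of $Z'_-$ on its trajectory. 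This is the only nontrivial step; the rest is algebraic bookkeeping using the two boundary identities at $x=0$.
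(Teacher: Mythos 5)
Your argument is correct, but it takes a genuinely different route from the paper's. The paper proves the lemma by a comparison argument, mimicking Lemma 2.4 of \cite{LMS1}: one compares the orbit of $Z_+$ in the $(Z,Z')$ phase plane with the solution of the reduced first-order system $y'=F(H(y))$, $y(\pm\infty)=LM_\pm$, and reads the inequality off the relative position of the two trajectories. You instead observe that Proposition \ref{propour23} already contains everything needed: the endpoint identities $Z_\pm(0)-Z''_\pm(0)=LM(u_*)$ are just the ODEs evaluated at the exit point (where $Z'_\pm(0)=-m$ and $h_\pm(-m)=u_*$), so the lemma reduces to the signs of $Z''_\pm(0)$, which follow from the global monotonicity of $Z'_\pm$ established there. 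This is shorter and avoids the auxiliary system entirely; what the comparison argument buys is independence from the global sign of $Z''_\pm$, which is why it is the natural route in \cite{LMS1}.

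Three points to tighten. First, $Z''_+>0$ (resp.\ $Z''_-<0$) on the open interval only yields $Z''_+(0)\ge 0$ (resp.\ $Z''_-(0)\le 0$) after passing to the limit $x\to 0$, so you obtain the non-strict inequalities of the lemma, not the strict ones you claim at the end; since the lemma is stated non-strictly, this is harmless but should not be overstated. Second, for $Z'_-$ the obstruction to monotone decrease from $0$ at $-\infty$ down to $-m$ at $x=0$ is an interior local \emph{minimum}, not a local maximum; the computation $\varphi''(Z)=1/Z'<0$ at critical points shows every critical point of $Z'$ is a strict local maximum, which is precisely what excludes local minima, so the computation you invoke is the right one even though you name the wrong extremum. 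Third, the monotonicity directions printed in the statement of Proposition \ref{propour23} are inconsistent with its own proof and with the boundary data $Z'_\pm(0)=-m<0=Z'_+(+\infty)=Z'_-(-\infty)$; the reading you adopt ($Z'_+$ increasing on $[0,+\infty)$, $Z'_-$ decreasing on $(-\infty,0]$) is the consistent one, but since your proof hinges on it you should state explicitly that you are using the version established in the proof of that proposition rather than the version printed in its statement.
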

\medskip

\begin{proof}
This follows by mimicking the proof of Lemma 2.4 in \cite{LMS1}. 
We warn the reader to now consider the dynamical system
\begin{equation*}
	\begin{aligned}
		y' &= F(H(y)),\\
		y(\pm\infty) &= LM_\pm.
	\end{aligned}
\end{equation*}
A comparison of the solution $y$ of the system above with the trajectory $Z_+$ 
in the phase space yields the inequality on the right. 
The other inequality is analogous. See \cite{LMS1} for details.
\end{proof}
\medskip

The last lemma guarantees the existence of a point of intersection for the orbits of the 
maximal solutions $Z_+$ and $Z_-$ in the phase state field. 
The monotonicity of $Z_\pm$ and $Z'_\pm$ implies that the intersection is unique. 
Matching  the two trajectories at that point provides the desired $Z$-profile. 
Hence, we have the following extension of the existence result in \cite{LMS1} (Theorem 2.5).
\medskip

\begin{theorem}[\cite{LMS1}]\label{existencethm}
Under assumptions, there exists a (unique up to translations) $Z$-profile of class $C^1$ 
with $Z(\pm\infty) = LM_\pm$, solution to \eqref{our23}. 
The solution $Z$ is of class $C^2$ away from a single point, where $Z'$ has at most a jump discontinuity.
Moreover, there exists a (unique up to translations) velocity profile $U$ with $U(\pm\infty) = u_\pm$ 
solution to \eqref{newsystforU}, which is continuous away from a single point, where it has at most 
a jump discontinuity satisfying Rankine-Hugoniot conditions and the entropy condition.
\end{theorem}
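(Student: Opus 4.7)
The strategy is to glue the two maximal solutions $Z_\pm$ of Proposition \ref{propour23} along a common point of their orbits in the $(Z,Z')$ phase plane, using Lemma \ref{lemour24} to guarantee the matching, and then recover the velocity profile via $U = H(Z - Z'')$.

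First I would view each orbit $\{(Z_\pm(x), Z'_\pm(x))\}$ as a $C^1$ curve in the $(Z,Z')$ phase plane which, by the strict monotonicity of $Z_\pm$ and $Z'_\pm$ granted by Proposition \ref{propour23}, is the graph of a strictly monotone function of $Z$. The $Z_+$ orbit runs from the saddle $(LM_+,0)$ at $x=+\infty$ to an endpoint $(Z_+(0),-m)$ on the degeneracy line $\{Z'=-m\}$, and the $Z_-$ orbit from $(LM_-,0)$ to $(Z_-(0),-m)$. By \eqref{A4} we have $LM_+ < LM_-$, while Lemma \ref{lemour24} yields $Z_-(0) \leq LM(u_*) \leq Z_+(0)$; thus the relative order of the two graphs reverses between the levels $Z'=0$ and $Z'=-m$. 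Continuity of each graph and strict monotonicity then force the existence of a unique crossing point $(Z^\ast,-\mu)$ with $\mu \in (0,m]$, transversal away from the $h_\pm$-degeneracy by uniqueness for each scalar ODE in \eqref{newtwoODEs}.

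Translating so that both trajectories pass through this crossing at $x=0$, I define
\[
Z(x) := \begin{cases} Z_-(x), & x \leq 0, \\ Z_+(x), & x \geq 0, \end{cases}
\]
which by construction lies in $C^1(\R)$, solves \eqref{our23} classically on each open half-line, and satisfies $Z(\pm\infty)=LM_\pm$, $Z'(\pm\infty)=0$. At $x=0$ the second derivative generically jumps, since $Z''(0^\pm) = Z^\ast - LM(h_\pm(-\mu))$ and $h_+(-\mu) < u_* < h_-(-\mu)$ whenever $\mu < m$. The velocity profile $U(x) := H(Z(x)-Z''(x))$ is therefore continuous on $\R\setminus\{0\}$ with $U(\pm\infty)=u_\pm$ and one-sided limits $U(0^\pm) = h_\pm(-\mu)$ obeying $U(0^+) \leq u_* \leq U(0^-)$, which is the Lax entropy condition. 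The stationary Rankine--Hugoniot condition $f(U(0^+)) = f(U(0^-))$ then follows immediately from continuity of $Z'$ at $0$ together with the first equation of \eqref{newsystforU} and the assumption $f(u_+)=f(u_-)$. Uniqueness up to translations is inherited from the corresponding uniqueness in Proposition \ref{propour23} and of the phase-plane crossing.

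The main obstacle will be making the phase-plane intersection argument fully rigorous. Lemma \ref{lemour24} compares the two orbits only at the common boundary level $Z'=-m$; to conclude that the orbits actually cross, I need to combine this with the opposite ordering at the asymptotic level $Z'=0$ (which comes from $LM_+ < LM_-$) and with the global strict monotonicity of each orbit graph on the strip $\{-m \leq Z' \leq 0\}$. A one-dimensional intermediate-value argument applied to the difference of the two graphs of $Z'$ as functions of $Z$ on overlapping intervals then yields existence and uniqueness of the crossing. Once this is secured, the regularity assertion, the definition of $U$, and the verification of the jump conditions are all direct consequences of the identity \eqref{newsystforU} and the choice of $h_\pm$.
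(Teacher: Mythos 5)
Your proposal is correct and follows essentially the same route as the paper: glue $Z_+$ and $Z_-$ at the unique phase-plane intersection guaranteed by Lemma \ref{lemour24} and the monotonicity of the orbits, then recover $U=H(Z-Z'')$ and read off the entropy and Rankine--Hugoniot conditions from $h_\pm$ and the first equation of \eqref{newsystforU}. You in fact spell out the intermediate-value and jump-condition steps in more detail than the paper's very terse argument, and your reading of the Rankine--Hugoniot condition as $f(U(\bar x-0))=f(U(\bar x+0))$ is the intended one.
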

\medskip

\begin{proof}
Lemma \ref{lemour24}  implies the existence of a point in the $(Z,Z')$ plane where the graphs 
of $Z_-$ and $Z_+$ intersect. 
By monotonicity of the graphs the intersection is unique. 
Thus, after  an appropriate translation,  we can find a point $\bar x \in \R$ such that 
$(Z_-(\bar x), Z'_-(\bar x))  = (Z_+(\bar x), Z'_+(\bar x)) =: (\hat Z, \hat Y)$, and the 
$Z$-profile is defined as
\begin{equation*}
	Z(x) := \begin{cases} Z_+(x), & x \geq \bar x,\\ Z_-(x), & x \leq \bar x.\end{cases}
\end{equation*}
$Z$ is $C^1$ and satisfies $Z \to Lb_\pm$ as $x \to \pm \infty$. 
Moreover, $Z$ is $C^2$ except at $x = \bar x$. 
The velocity profile is now defined via
\begin{equation*}
 	U := H(Z-Z''),
\end{equation*}
with the described regularity properties due to regularity of $Z$ and the fact that 
$H = (LM)^{-1}$ is of class, at least, $C^2$. 
Likewise, at the only possible discontinuity $x=\bar x$ of $U$ is is possible to prove that $U$ 
satisfies Rankine-Hugoniot condition, $U(\bar x -0) = U(\bar x + 0)$ and the entropy 
condition $U(\bar x -0) = h_-(\hat Y) > h_+(\hat Y) = U(\bar x + 0)$.
\end{proof}
\medskip

\begin{lemma}[Monotonicity]\label{monoU}
The constructed profile $U$ is strictly monotone decreasing.
\end{lemma}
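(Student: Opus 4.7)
The plan is to express $U'$ directly in terms of $Z''$ and $f'(U)$ via the first profile equation, and then carry out a sign analysis on each of the two smooth branches of $Z$.

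First, from $(f(U))' + LQ' = 0$ together with $Z' = -LQ$ (so $Z'' = -LQ'$) one obtains the algebraic identity $f'(U)\,U' = Z''$, i.e.
\begin{equation*}
 U'(x) = \frac{Z''(x)}{f'(U(x))}
\end{equation*}
at every point where $f'(U)\neq 0$. From this identity, strict monotonicity of $U$ on each open half-line reduces to a sign analysis of $Z''$ and $f'(U)$ separately.

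Next I would invoke the phase-plane argument inside the proof of Proposition \ref{propour23}, which rules out any interior extremum of $Z'_\pm$ via the contradiction $\varphi''(Z) = 1/Z' < 0$ at a would-be critical point, to get the \emph{strict} inequalities $Z_+'' > 0$ on $(\bar x,+\infty)$ and $Z_-'' < 0$ on $(-\infty,\bar x)$; the direction of each inequality is fixed by the tangent direction of the stable/unstable manifold at the saddle at $\pm\infty$. On the right branch $U = h_+(Z_+')$ takes values in $[u_+,u_*)$, and strict convexity of $f$ together with $f'(u_*)=0$ gives $f'(U) < 0$; the two signs combine to give $U' < 0$. On the left branch, symmetrically, $U = h_-(Z_-') \in (u_*,u_-]$ with $f'(U) > 0$ and $Z''_- < 0$, so again $U' < 0$.

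The matching point $\bar x$ is handled by reading $U = h_\pm(Z')$ one-sidedly: writing $\hat Y := Z'(\bar x) \in [-m,0)$,
\begin{equation*}
 U(\bar x^+) = h_+(\hat Y) \;\le\; u_* \;\le\; h_-(\hat Y) = U(\bar x^-),
\end{equation*}
with equality throughout precisely when $\hat Y = -m$ (the continuous-matching case), and otherwise a strict downward jump compatible with the Rankine--Hugoniot and entropy conditions recorded in Theorem \ref{existencethm}. Either way $U$ is weakly decreasing across $\bar x$, which combined with the strict decrease established on each open half-line yields strict monotonicity of $U$ on all of $\R$. The one delicate point I anticipate is the strict-sign step for $Z_\pm''$ on the open branches: this cannot be extracted from the statement of Proposition \ref{propour23} (which only records monotonicity of $Z'_\pm$ in the weak sense), but must come from the phase-plane contradiction in its proof. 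Once that is granted the rest of the argument is a routine sign check.
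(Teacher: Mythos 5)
Your argument is correct and is essentially the paper's proof in differentiated form: the paper argues (by contradiction) that $U=h_\pm(Z'_\pm)$ is strictly decreasing because $Z'_\pm$ is strictly monotone and $h_\pm$ is strictly monotone of the opposite/matching sense, with the range separation $h_+:[-m,0]\to[u_+,u_*]$, $h_-:[-m,0]\to[u_*,u_-]$ handling points on opposite sides of $\bar x$ --- which is exactly your sign computation $U'=Z''/f'(U)$, since $h_\pm'=1/f'(h_\pm(\cdot))$. You are also right that the strict inequalities $Z''_+>0$, $Z''_-<0$ must be read off from the phase-plane contradiction inside the proof of Proposition \ref{propour23} rather than from its statement; the paper's own proof of the lemma relies on that same strictness.
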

\medskip

\begin{proof}
Let $x_2 > x_1$, with $x_i \neq \bar x$, and suppose that $U(x_2) \geq U(x_1)$, that is, 
$H(Z-Z'')_{|x=x_2} \geq H(Z-Z'')_{|x=x_1}$. Since $H$ is strictly monotone increasing we 
readily have that
\begin{equation*}
	LM(h_\pm(Z'_\pm(x_2)) = (Z- Z'')_{|x=x_2} \geq (Z-Z'')_{x=x_1}
	= LM(h_\pm(Z'_\pm(x_1)),
\end{equation*}
where the $\pm$ sign depends on which side of $x= \bar x$ we are evaluating the $Z$-profile. 
Suppose $x_1$, $x_2$ are on the same side, say, $\bar x < x_1 < x_2$ 
(the symmetric case, $x_1 < x_2 < \bar x$, is analogous). 
Since $LM$ is monotonically increasing, last condition implies that 
$h_+(Z'_+(x_2)) \geq h_+(Z'_+(x_1))$. 
But this is a contradiction with the fact that $Z_+'$ is monotone increasing and $h_+$ is 
strictly decreasing, yielding $h_+(Z'_+(x_2)) < h_+(Z'_+(x_1))$. 
The case $x_1 < \bar x < x_2$ leads to the condition $h_+(Z'_+(x_2)) \geq h_-(Z'_-(x_1))$, 
which is obviously false in view that $h_+ : [-m,0] \to [u_+,u_*]$ and $h_- : [-m,0] \to [u_*,u_-]$, 
yielding again a contradiction. 
Finally, we remark that at the only point of discontinuity of $U$, namely at $x=\bar x$, 
the jump is entropic, satisfying $U(\bar x - 0) > U(\bar x + 0)$. 
Therefore $U$ is strictly monotone decreasing in all $x \in \R$.
\end{proof}
\medskip

\begin{remark}\label{regulrem}\rm
Observe that the constructed velocity profile is continuous, except, at most, at one point 
where it observes an entropic jump. 
The regularity of $U$ increases as long as the strength of the profile decreases below an 
explicit threshold \cite{KN1,LMS1}, becoming continuous and, moreover, of class $C^2$. 
We remark, however, that away from the possible discontinuity $x=\bar x$, the profile has 
the same regularity of $Z''$, independently of the shock strength, because of smoothness of $H$. 
Whence, from regularity assumption \eqref{A0} and by differentiating equation \eqref{newtwoODEs}, 
$Z''$ is of class $C^2$ away from $x = \bar x$, and so is $U$. 
Finally, thanks to translation invariance we have chosen $x=0$ to be the point where the 
equations for the profiles $Z_\pm$ reach $LM(u_*)$, being $u_*$ the only zero of $\frac{df}{du}(u)$; 
this implies that $U(0) = H(Z-Z'')_{x=0} = H(LM(h_\pm(-m))) = u_*$, so that $a(x) = \frac{df}{du}(U)$ 
vanishes only at $x=0$.
\end{remark}

In view of last remark we have the following
\medskip

\begin{corollary}\label{Ureg}
Except for a possible single point $x = \bar x$, the profile $U$ is of class $C^2$ 
and satisfies $U' < 0$ a.e. 
Moreover, the function $a(x) :=\frac{df}{du}(U)$ is of class $C^1$ except at a 
point $x=\bar x$, and  vanishes only at $x = 0$ (by translation invariance).
\end{corollary}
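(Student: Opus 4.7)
\medskip

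\noindent\textbf{Proof proposal.} The corollary is essentially a packaging of the results proved just above it, so the plan is to assemble Theorem \ref{existencethm}, Lemma \ref{monoU} and Remark \ref{regulrem} rather than to introduce any new machinery. First, I would invoke Theorem \ref{existencethm} to recall that $Z$ is $C^1$ on all of $\R$ and $C^2$ away from a single point $x=\bar x$ (where $Z'$ may have a jump), and that $U=H(Z-Z'')$ with $H=(LM)^{-1}$. Since assumption \eqref{A0} gives $f,M\in C^5$, and \eqref{A4} makes $LM$ a strict diffeomorphism on $[u_+,u_-]$, the inverse $H$ is at least $C^2$ (in fact $C^5$); thus the regularity of $U$ is inherited directly from that of $Z-Z''$.

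To push $U$ from merely $C^0$ (away from $\bar x$) to $C^2$ (away from $\bar x$), I would differentiate the equation \eqref{newtwoODEs} for $Z_\pm$ once more: because the right-hand side $Z-LM(h_\pm(Z'))$ is $C^1$ in its arguments wherever we stay in the interior of the intervals $[u_+,u_*)$ or $(u_*,u_-]$, one obtains that $Z''$ is itself $C^2$ on $\R\setminus\{\bar x\}$. Composing with $H\in C^2$ then yields $U\in C^2(\R\setminus\{\bar x\})$, as asserted in Remark \ref{regulrem}. The monotonicity statement $U'<0$ a.e.\ is then exactly the conclusion of Lemma \ref{monoU} on each side of $\bar x$, together with the entropic jump $U(\bar x-0)>U(\bar x+0)$ which handles the exceptional point.

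For the second half of the statement, note that $a(x)=\frac{df}{du}(U(x))$ is the composition of the $C^4$ function $f'$ (by \eqref{A0}) with $U$, so $a\in C^1(\R\setminus\{\bar x\})$ follows immediately from the $C^2$ regularity of $U$ just established. To locate the zeros of $a$, I would use genuine nonlinearity \eqref{A1}: strict convexity of $f$ means $f'$ has exactly one zero $u_*\in(u_+,u_-)$, so $a(x)=0$ iff $U(x)=u_*$. By Lemma \ref{monoU}, $U$ is strictly monotone decreasing, so the level set $\{U=u_*\}$ contains at most one point. The translation-invariance choice made at the end of Remark \ref{regulrem}, namely that the profiles $Z_\pm$ are matched so that $U(0)=u_*$, then pins this unique point at $x=0$.

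The only mildly delicate step is verifying that no $C^2$ regularity is lost when we differentiate $Z''=Z-LM(h_\pm(Z'))$ through the inverses $h_\pm$, which become singular at the endpoint $Z'=-m$ corresponding to $x=\bar x$; this is precisely why the statement excludes $\bar x$, and away from $\bar x$ one has $f'(U)\neq 0$ so $h_\pm'=1/f'(h_\pm(\cdot))$ is smooth. Apart from this bookkeeping, the proof is a direct citation of the preceding results.
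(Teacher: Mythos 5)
Your proposal is correct and follows essentially the same route as the paper, which gives no separate proof of this corollary but derives it directly from Theorem \ref{existencethm}, Lemma \ref{monoU} and Remark \ref{regulrem}: smoothness of $H=(LM)^{-1}$ transfers the regularity of $Z''$ (obtained by differentiating \eqref{newtwoODEs}) to $U$, monotonicity comes from Lemma \ref{monoU}, and the unique zero of $a$ at $x=0$ comes from strict convexity of $f$, strict monotonicity of $U$, and the translation normalization $U(0)=u_*$. Your added remark about the singularity of $h_\pm$ at $Z'=-m$ is a fair observation but introduces nothing beyond the paper's own bookkeeping.
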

\medskip

Finally, we state the regularity properties for the convex flux, based on the 
analysis in \cite{LMS1}, Section 3. The proof is, once again, an adaptation to the 
general $G$ case of the proof of Proposition 3.3 in \cite{LMS1},  which we omit.
\medskip

\begin{corollary}
Under convexity of the velocity flux, $\frac{d^2f}{du^2}> 0$, if the shock amplitude $|u_+ - u_-|$ 
is sufficiently small then the profile is of class $C^2$ and $U'(x) < 0$ for all $x \in \R$.
\end{corollary}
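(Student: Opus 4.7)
The plan is to adapt Proposition 3.3 of \cite{LMS1} to the nonlinear coupling case. By Theorem \ref{existencethm} and Corollary \ref{Ureg}, the constructed profile $U$ is already $C^2$ and strictly decreasing away from the single matching point $\bar x$ between the maximal trajectories $Z_\pm$ of Proposition \ref{propour23}, so the entire question reduces to showing that, for small enough amplitude, $\bar x$ ceases to be a singular point.

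I would first characterize when the matching creates a genuine singularity. Using the first equation in \eqref{newsystforU}, namely $Z' = f(U) - f(u_\pm)$, one can express $U = h_\pm(Z')$ on either side of $\bar x$; since the two branches $h_\pm$ intersect only at the endpoint $h_+(-m) = h_-(-m) = u_*$, the profile is continuous at $\bar x$ if and only if the matching occurs at the boundary value $Z'(\bar x) = -m$. In that case $U(\bar x) = u_*$, a direct differentiation of $Z'' = Z - LM(h_\pm(Z'))$ combined with $f'(u_*) = 0$ shows that $Z''$ automatically matches across $\bar x$, and $U'(\bar x)$ satisfies the quadratic relation
\begin{equation*}
    f''(u_*)\,(U'(\bar x))^2 + L\,\tfrac{dM}{du}(u_*)\,U'(\bar x) + m = 0,
    \qquad m := f(u_\pm) - f(u_*) > 0.
\end{equation*}
For small amplitude $m = \cO(|u_+-u_-|^2)$, so \eqref{A1} and \eqref{A4} make the discriminant strictly positive and both roots real, distinct, and negative; a continuity argument against the smooth rescaled limit below selects the same root on both sides, and a bootstrap through $U' = Z''/f'(U)$ promotes $C^1$ to $C^2$ regularity at $\bar x$.

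The crucial step is to show that for $|u_+-u_-|$ small, the maximal trajectories $Z_\pm$ do meet on the termination line $Z' = -m$ of the phase plane. Each reaches this line at some point $P_\pm = (\zeta_\pm, -m)$ by Proposition \ref{propour23}, and one must prove $\zeta_- = \zeta_+$ in this regime. I would do this via the blow-up rescaling $u = u_* + \varepsilon v$ with $\varepsilon = |u_+-u_-|$, together with commensurate slow rescalings of $x$ and $Z$, so that Taylor-expanding $f$ and $LM$ near $u_*$ using \eqref{A1} and \eqref{A4} produces in the limit $\varepsilon \to 0$ a symmetric scalar profile equation whose phase portrait admits a unique heteroclinic orbit through the sonic point. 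This forces $\zeta_+ = \zeta_-$ at leading order, and continuous dependence of the stable/unstable manifolds of the saddles $(LM_\pm, 0)$ on $\varepsilon$ propagates the equality to all sufficiently small $\varepsilon$.

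The main obstacle is the degeneracy of the phase-plane analysis at the sonic point: because $f'(u_*) = 0$, the branches $h_\pm$ meet the termination line with vertical slope and the naive linearization at $(LM(u_*),-m)$ fails, so the implicit function theorem cannot be invoked directly. This is exactly where \cite{LMS1} employs a blow-up / center-manifold calculation; the extension to nonlinear $M$ is possible precisely because \eqref{A4} makes $LM$ a smooth strictly increasing diffeomorphism on its range, so substituting $LM(u)$ for $Lu$ preserves all comparison estimates to leading order in $\varepsilon$. Once $\zeta_- = \zeta_+$ is established, global strict monotonicity $U'(x) < 0$ follows immediately: on $\R \setminus \{\bar x\}$ from Lemma \ref{monoU}, and at $\bar x$ from the fact that the roots of the quadratic above are both negative.
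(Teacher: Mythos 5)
The paper offers no proof of this corollary beyond the sentence preceding it: the argument is declared to be ``an adaptation to the general case of the proof of Proposition 3.3 in \cite{LMS1}, which we omit.'' Your reconstruction follows exactly the strategy that citation points to, and the computations you do carry out are correct: continuity of $U$ at $\bar x$ is indeed equivalent to the orbits of $Z_\pm$ meeting on the termination line $Z'=-m$ (since $h_+$ and $h_-$ agree only at $u_*$); the relation $f''(u_*)\,(U'(\bar x))^2+L\,\frac{dM}{du}(u_*)\,U'(\bar x)+m=0$ is what one obtains from $Z'''=(f'(U)U')'$ at the sonic point, provided the $0\cdot\infty$ product $h_\pm'(Z')\,Z''=U'$ is handled as you implicitly do; and $m=f(u_\pm)-f(u_*)=\cO(|u_+-u_-|^2)$ together with \eqref{A1}, \eqref{A4} makes the discriminant positive with two real negative roots. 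You also correctly isolate where the real work lies: showing that for small amplitude the maximal trajectories of Proposition \ref{propour23} terminate at the \emph{common} corner $(LM(u_*),-m)$, i.e.\ that both inequalities of Lemma \ref{lemour24} become equalities.

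That is also the one place where the mechanism you propose would not deliver the conclusion. The identity $Z_+(0)=Z_-(0)=LM(u_*)$ looks a priori like a codimension-one condition, and continuous dependence of the saddle manifolds at $(LM_\pm,0)$ on $\epsilon$ near a \emph{singular} rescaled limit cannot by itself force an exact identity to persist on an open interval of amplitudes. What actually forces it is structural, not perturbative: setting $V:=f'(U)U'=Z''$ and desingularizing, one studies the planar field
\begin{equation*}
  (\dot U,\dot V)=\Bigl(V,\;F(U)f'(U)-L\,\tfrac{dM}{du}(U)\,V\Bigr),
\end{equation*}
whose linearization at the sonic point $(u_*,0)$ has characteristic polynomial $\mu^2+L\,\frac{dM}{du}(u_*)\,\mu+m f''(u_*)$. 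When your discriminant is positive this point is an attracting node, so the separatrix emanating from $(u_-,0)$ and (after the orientation reversal caused by the sign change of $f'$) the one entering $(u_+,0)$ are both captured by it, generically along the weak eigendirection; this is what yields $\zeta_+=\zeta_-$, selects the \emph{same} root of your quadratic on both sides, and, by expanding the invariant curve one order further, upgrades the matching to $C^2$ with $U'(\bar x)<0$. You name this blow-up computation but do not perform it, so the proposal as written has a gap at the step that carries the entire content of the corollary beyond the algebra --- though, in fairness, no larger a gap than the paper's own omitted proof.
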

\medskip


\begin{thebibliography}{10}

\bibitem{AGJ}
{\sc J.~Alexander, R.~A. Gardner, and C.~K. R.~T. Jones}, {\em A topological
  invariant arising in the stability analysis of travelling waves}, J. Reine
  Angew. Math. \textbf{410} (1990), pp.~167--212.

\bibitem{diF07}
{\sc M.~di~Francesco}, {\em Initial value problem and relaxation limits of the
  {H}amer model for radiating gases in several space variables}, NoDEA
  Nonlinear Differential Equations Appl. \textbf{13} (2007), no.~5-6,
  pp.~531--562.

\bibitem{DFL}
{\sc M.~Di~Francesco, K.~Fellner, and H.~Liu}, {\em A nonlocal conservation law
  with nonlinear ``radiation'' inhomogeneity}, J. Hyperbolic Differ. Equ.
  \textbf{5} (2008), no.~1, pp.~1--23.

\bibitem{DL}
{\sc M.~Di~Francesco and C.~Lattanzio}, {\em Optimal {$L\sp 1$} rate of decay
  to diffusion waves for the {H}amer model of radiating gases}, Appl. Math.
  Lett. \textbf{19} (2006), no.~10, pp.~1046--1052.

\bibitem{GRZ}
{\sc W.~Gao, L.~Ruan, and C.~Zhu}, {\em Decay rates to the planar rarefaction
  waves for a model system of the radiating gas in {$n$} dimensions}, J.
  Differential Equations \textbf{244} (2008), no.~10, pp.~2614--2640.

\bibitem{GZ}
{\sc R.~A. Gardner and K.~Zumbrun}, {\em The gap lemma and geometric criteria
  for instability of viscous shock profiles}, Comm. Pure Appl. Math.
  \textbf{51} (1998), pp.~797--855.

\bibitem{Go1}
{\sc J.~Goodman}, {\em Nonlinear asymptotic stability of viscous shock profiles
  for conservation laws}, Arch. Rational Mech. Anal. \textbf{95} (1986),
  pp.~325--344.

\bibitem{Hm}
{\sc K.~Hamer}, {\em Nonlinear effects on the propagation of sound waves in a
  radiating gas}, Quart. J. Mech. Appl. Math. \textbf{24} (1971), pp.~155--168.

\bibitem{He}
{\sc D.~Henry}, {\em Geometric Theory of Semilinear Parabolic Equations},
  no.~840 in Lecture Notes in Mathematics, Springer-Verlag, New York, 1981.

\bibitem{Ho}
{\sc P.~Howard}, {\em Pointwise {G}reen's function approach to stability for
  scalar conservation laws}, Comm. Pure Appl. Math. \textbf{52} (1999), no.~10,
  pp.~1295--1313.

\bibitem{HZ}
{\sc P.~Howard and K.~Zumbrun}, {\em Stability of undercompressive shock
  profiles}, J. Differential Equations \textbf{225} (2006), no.~1,
  pp.~308--360.

\bibitem{IK}
{\sc T.~Iguchi and S.~Kawashima}, {\em On space-time decay properties of
  solutions to hyperbolic-elliptic coupled systems}, Hiroshima Math. J.
  \textbf{32} (2002), no.~2, pp.~229--308.

\bibitem{Ito}
{\sc K.~Ito}, {\em {B}{V}-solutions of the hyperbolic-elliptic system for a
  radiating gas}.
\newblock Hokkaido University, Preprint Series in Mathematics, Preprint no. 368, 1997.

\bibitem{KaNN1}
{\sc S.~Kawashima, Y.~Nikkuni, and S.~Nishibata}, {\em The initial value
  problem for hyperbolic-elliptic coupled systems and applications to radiation
  hydrodynamics}, in Analysis of systems of conservation laws (Aachen, 1997),
  vol.~99 of Monogr. Surv. Pure Appl. Math., Chapman \& Hall/CRC, Boca Raton,
  FL, 1999, pp.~87--127.

\bibitem{KaNN2}
\leavevmode\vrule height 2pt depth -1.6pt width 23pt, {\em Large-time behavior
  of solutions to hyperbolic-elliptic coupled systems}, Arch. Ration. Mech.
  Anal. \textbf{170} (2003), no.~4, pp.~297--329.

\bibitem{KN1}
{\sc S.~Kawashima and S.~Nishibata}, {\em Shock waves for a model system of the
  radiating gas}, SIAM J. Math. Anal. \textbf{30} (1998), no.~1, pp.~95--117
  (electronic).

\bibitem{KN2}
\leavevmode\vrule height 2pt depth -1.6pt width 23pt, {\em Weak solutions with
  a shock to a model system of the radiating gas}, Sci. Bull. Josai Univ.
  \textbf{5} (1998), pp.~119--130.

\bibitem{KN3}
\leavevmode\vrule height 2pt depth -1.6pt width 23pt, {\em Cauchy problem for a
  model system of the radiating gas: weak solutions with a jump and classical
  solutions}, Math. Models Meth. Appl. Sci. \textbf{9} (1999), pp.~69--91.

\bibitem{KT}
{\sc S.~Kawashima and Y.~Tanaka}, {\em Stability of rarefaction waves for a
  model system of a radiating gas}, Kyushu J. Math. \textbf{58} (2004), no.~2,
  pp.~211--250.

\bibitem{KZ}
{\sc B.~Kwon and K.~Zumbrun}, {\em Asymptotic behavior of multidimensional
  scalar relaxation shocks}.
\newblock J. Hyperbolic Diff. Eqs., in press.

\bibitem{LM1}
{\sc C.~Lattanzio and P.~Marcati}, {\em Global well-posedness and relaxation
  limits of a model for radiating gas}, J. Differential Equations \textbf{190}
  (2003), no.~2, pp.~439--465.

\bibitem{LMS1}
{\sc C.~Lattanzio, C.~Mascia, and D.~Serre}, {\em Shock waves for radiative
  hyperbolic-elliptic systems}, Indiana Univ. Math. J. \textbf{56} (2007),
  no.~5, pp.~2601--2640.

\bibitem{LMS2}
\leavevmode\vrule height 2pt depth -1.6pt width 23pt, {\em Nonlinear
  hyperbolic-elliptic coupled systems arising in radiation dynamics}, in
  Hyperbolic Problems: Theory, Numerics, Applications (Lyon, July 17-21, 2006),
  S.~Benzoni-Gavage and D.~Serre, eds., Springer-Verlag, Boston, Berlin,
  Heidelberg, 2008, pp.~661--669.

\bibitem{Lau05}
{\sc P.~Lauren{\c{c}}ot}, {\em Asymptotic self-similarity for a simplified
  model for radiating gases}, Asymptot. Anal. \textbf{42} (2005), no.~3-4,
  pp.~251--262.

\bibitem{LCG2}
{\sc C.~Lin, J.-F. Coulombel, and T.~Goudon}, {\em Asymptotic stability of
  shock profiles in radiative hydrodynamics}, C. R. Math. Acad. Sci. Paris
  \textbf{345} (2007), no.~11, pp.~625--628.

\bibitem{MaZ1}
{\sc C.~Mascia and K.~Zumbrun}, {\em Pointwise {G}reen's function bounds and
  stability of relaxation shocks}, Indiana Univ. Math. J. \textbf{51} (2002),
  no.~4, pp.~773--904.

\bibitem{MaZ3}
\leavevmode\vrule height 2pt depth -1.6pt width 23pt, {\em Pointwise {G}reen
  function bounds for shock profiles of systems with real viscosity}, Arch.
  Ration. Mech. Anal. \textbf{169} (2003), no.~3, pp.~177--263.

\bibitem{MaZ4}
\leavevmode\vrule height 2pt depth -1.6pt width 23pt, {\em Stability of
  large-amplitude viscous shock profiles of hyperbolic-parabolic systems},
  Arch. Ration. Mech. Anal. \textbf{172} (2004), no.~1, pp.~93--131.

\bibitem{MaZ5}
\leavevmode\vrule height 2pt depth -1.6pt width 23pt, {\em Stability of
  large-amplitude shock profiles of general relaxation systems}, SIAM J. Math.
  Anal. \textbf{37} (2005), no.~3, pp.~889--913 (electronic).

\bibitem{MN}
{\sc A.~Matsumura and K.~Nishihara}, {\em On the stability of travelling wave
  solutions of a one-dimensional model system for compressible viscous gas},
  Japan J. Appl. Math. \textbf{2} (1985), no.~1, pp.~17--25.

\bibitem{MeZ1}
{\sc G.~M{\'e}tivier and K.~Zumbrun}, {\em Large viscous boundary layers for
  noncharacteristic nonlinear hyperbolic problems}, in Hyperbolic problems and
  related topics, Grad. Ser. Anal., Int. Press, Somerville, MA, 2003,
  pp.~243--252.

\bibitem{NZ2}
{\sc T.~Nguyen and K.~Zumbrun}, {\em Long-time stability of multi-dimensional
  noncharacteristic viscous boundary layers}.
\newblock Preprint, 2009.

\bibitem{PZ}
{\sc R.~G. Plaza and K.~Zumbrun}, {\em An {E}vans function approach to spectral
  stability of small-amplitude shock profiles}, Discr. and Cont. Dynam. Syst.
  \textbf{10} (2004), no.~4, pp.~885--924.

\bibitem{RZ}
{\sc M.~Raoofi and K.~Zumbrun}, {\em Stability of undercompressive viscous
  shock profiles of hyperbolic-parabolic systems}, J. Differential Equations
  \textbf{246} (2009), no.~4, pp.~1539--1567.

\bibitem{San}
{\sc B.~Sandstede}, {\em Stability of travelling waves}, in Handbook of
  dynamical systems, Vol. 2, B.~Fiedler, ed., North-Holland, Amsterdam, 2002,
  pp.~983--1055.

\bibitem{ST}
{\sc S.~Schochet and E.~Tadmor}, {\em The regularized {C}hapman-{E}nskog
  expansion for scalar conservation laws}, Arch. Rational Mech. Anal.
  \textbf{119} (1992), pp.~95--107.

\bibitem{Ser0}
{\sc D.~Serre}, {\em {$L\sp 1$}-stability of constants in a model for radiating
  gases}, Commun. Math. Sci. \textbf{1} (2003), no.~1, pp.~197--205.

\bibitem{Ser7}
\leavevmode\vrule height 2pt depth -1.6pt width 23pt, {\em {$L\sp 1$}-stability
  of nonlinear waves in scalar conservation laws}, in Evolutionary equations,
  C.~M. Dafermos and E.~Feireisl, eds., vol.~1 of Handbook of Differential
  Equations, North-Holland, Amsterdam, 2004, pp.~473--553.

\bibitem{VK}
{\sc W.~G. Vincenti and C.~H. Kruger}, {\em Introduction to {P}hysical {G}as
  {D}ynamics}, Wiley \& Sons, New York, 1965.

\bibitem{Z3}
{\sc K.~Zumbrun}, {\em Multidimensional stability of planar viscous shock
  waves}, in Advances in the Theory of Shock Waves, H.~Freist{\"u}hler and
  A.~Szepessy, eds., vol.~47 of Progress in Nonlinear Differential Equations
  and Their Applications, Birkh\"auser, Boston, 2001, pp.~307--516.

\bibitem{Z4}
\leavevmode\vrule height 2pt depth -1.6pt width 23pt, {\em Stability of
  large-amplitude shock waves of compressible {N}avier-{S}tokes equations}, in
  Handbook of mathematical fluid dynamics. Vol. III, S.~Friedlander and
  D.~Serre, eds., North-Holland, Amsterdam, 2004, pp.~311--533.

\bibitem{Z7}
\leavevmode\vrule height 2pt depth -1.6pt width 23pt, {\em Planar stability
  criteria for viscous shock waves of systems with real viscosity}, in
  Hyperbolic systems of balance laws, P.~Marcati, ed., vol.~1911 of Lecture
  Notes in Math., Springer, Berlin, 2007, pp.~229--326.

\bibitem{ZH}
{\sc K.~Zumbrun and P.~Howard}, {\em Pointwise semigroup methods and stability
  of viscous shock waves}, Indiana Univ. Math. J. \textbf{47} (1998), no.~3,
  pp.~741--871.

\end{thebibliography}
\end{document}